\numberwithin{equation}{section}
\newtheorem{thm}{Theorem}[section]
\newtheorem{lemma}[thm]{Lemma}
\newtheorem{prop}[thm]{Proposition}
\newtheorem{cor}[thm]{Corollary}
\newtheorem{defn}[thm]{Definition}
\newtheorem{rmk}[thm]{Remark}
\newtheorem{notation}[thm]{Notation}
\newenvironment{proofof}[1]{\noindent {\bf Proof of #1.}}{ \hfill $\Box$\\ }
\newcommand{\hj}{{\underline j}}
\newcommand{\hk}{{\underline k}}
\newcommand{\hl}{{\underline l}}
\newcommand{\cB}{{\mathbb B}}
\newcommand{\cF}{{\mathcal F}}
\newcommand{\tD}{{\widetilde D}}
\newcommand{\tM}{{\widetilde M}}
\newcommand{\tW}{{\widetilde W}}
\newcommand{\tZ}{{\widetilde Z}}
\newcommand{\tcS}{\widetilde{\mathcal{S}}}
\newcommand{\bd}{\overline{d}}
\newcommand{\bW}{\overline{W}}
\newcommand{\qed}{\hfill \mbox{\raggedright \rule{.07in}{.1in}}}
\newenvironment{proof}{\vspace{1ex}\noindent{\bf
Proof}\hspace{0.5em}}{\hfill\qed\vspace{1ex}}
\newenvironment{pfof}[1]{\vspace{1ex}\noindent{\bf Proof of
#1}\hspace{0.5em}}{\hfill\qed\vspace{1ex}}
\def\R{\mathbb{R}}
\def\N{\mathbb{N}}
\def\Z{\mathbb{Z}}
\def\E{\mathbb{E}}
\def\pP{\mathbb{P}}
\def\T{\mathbb{T}}
\def\cB{\mathcal{B}}
\def\cN{\mathcal{N}}
\def\cS{\mathcal{S}}
\newcommand{\bbS}{{{\mathbb S}^{d-1}}}
\newcommand{\eps}{{\varepsilon}}
\begin{document}

\title{Generalized law of the iterated logarithm for the Lorentz gas with infinite horizon}

\author{P\'eter B\'alint\thanks{
 Department of Stochastics, Institute of Mathematics,
Budapest University of Technology and Economics,
M{\H u}egyetem rkp. 3., H-1111, Budapest, Hungary and HUN-REN--BME Stochastics Research Group,
Budapest University of Technology and Economics,
M{\H u}egyetem rkp. 3., H-1111 Budapest, Hungary and  Erd\H{o}s Center of the HUN-REN Alfr\'ed R\'enyi Institute of Mathematics, Re\'altanoda utca 13-15, H-1053, Budapest, Hungary;
\textit{balint.peter@ttk.bme.hu}. Support of the NKFIH Research Fund, grants 142169 and 144059, is thankfully acknowledged.}\quad and\
Dalia Terhesiu
\thanks{Institute of Mathematics, University of Leiden,
	Niels Bohrweg 1, 2333 CA Leiden, The Netherlands,
		{\it daliaterhesiu@gmail.com}}}

\date{\today}
\maketitle

\begin{abstract}
 We obtain a generalized law of the iterated logarithm for a class of dependent processes with superdiffusive behaviour. Our results apply  to the Lorentz gas with infinite horizon.
\end{abstract}

 \section{Introduction and main results}

In this paper we study almost sure limit phenomena for a class of weakly dependent processes of variables with heavy tailed distributions. In particular, we are interested in obtaining results analogous to
the law of the iterated logarithm and the almost sure invariance principle, but instead of the standard diffusive case, for the non-standard domain of attraction of the normal law. Even in the i.i.d. setting this is a
non-trivial problem, which was investigated by several authors, most importantly by Einmahl in \cite{Ein07},\cite{Ein09}, see below for further details. Our goal is to extend the results of Einmahl
to a class of dependent processes.

\subsection{The infinite horizon Lorentz gas\label{ss:lgintro}}

We hope that the approach presented in this paper can be later applied to several situations, yet, our primary motivation is a particulary important model
of mathematical physics: the periodic Lorentz gas with infinite horizon. Let us give a brief description of this model; for further details, see the references, in particular \cite{ChDo09}, \cite{PT20}, \cite{SV07}.
The $\Z^d$-periodic Lorentz gas describes the evolution of a point particle, equipped with unit speed, moving in the $\Z^d$-periodic domain $\tD$ contained either in the plane $\R^2$ (if $d=2$)
or in the tube $\R\times\T$ (if $d=1$). Within $\tD$ the motion of the particle is uniform and the collisions at the boundary $\partial\tD$ are elastic (equality of pre-collision and post-collision angles with the normal vector of the boundary at the point of impact).
The Lorentz gas map $\hat T:\hat M\to\hat M$ is the collision map on the two-dimensional phase space (position in $\partial\tD$ and post-collision velocity, parametrized by the angle made with the normal vector) given by $\hat M=\partial\tD\times (-\pi/2,\pi/2)$.

We assume periodicity, that is, $\tD$ is the lifted domain (by the canonical projection from $\R^2$ (if $d=2$) or from $\R\times\T$ (if $d=1$) onto $\T^2$ )
of
$D=\T^2\setminus Q$, where $Q\subset\T^2$ is a finite union of convex obstacles (scatterers)
with $C^3$ boundaries and nonvanishing curvature, and pairwise disjoint boundaries. Here $\T^2$ denotes the two dimensional torus.
The Sinai (dispersing) billiard map $T:M\to M$ corresponding to the associated collision map is obtained from $\hat T:\hat M\rightarrow\hat M$ by quotienting.
We denote by $\mu$ the unique ergodic
$T$-invariant smooth (absolutely continuous) probability measure on $M$.

The Lorentz gas map $\hat T:\hat M\to\hat M$ can be viewed as a $\Z^d$-cover, $d=1,2$, of the dispersing billiard $(T,M,\mu)$
by the
cell-change function
 $\kappa:M\to\Z^d$.
We distinguish two main cases: when $\kappa$ is bounded, the horizon is finite, and when $\kappa$ is unbounded, the horizon is \emph{infinite}.
Similarly, one may consider the flight function $\Phi:M\to\R^2$ given by the difference in $\R^2$ between consecutive collision points. The following coboundary relation is well-known (see eg.~\cite{SV07}) : $\Phi$ can be written as $\Phi=\kappa+\chi\circ T-\chi$,
where $\chi$ is bounded. Hence finite/infinite horizon can be also characterized by boundedness/unboundedness of $\Phi$. The Birkhoff sums $S_n\kappa= \sum_{i=0}^{n-1} \kappa\circ T^i$ and
$S_n\Phi= \sum_{i=0}^{n-1} \Phi\circ T^i$ describe the position of the particle after $n$ collisions, on $\Z^d$ and on $\R^2$, respectively. When choosing the initial point according to $\mu$, these quanitites can be regarded sums of (weakly) dependent, vector valued random variables.

The finite and infinite horizon Lorentz gases are popular mechanical models of diffusion and superdiffusion, respectively. It is a classical result (\cite{BuSi}) that for finite horizon $S_n\kappa$ (and thus $S_n\Phi$) follows a central limit theorem (CLT) with the standard diffusive scaling $\sqrt{n}$. On the other hand, for infinite horizon,
under certain nondegeneracy conditions,~\cite{SV07} proved that $S_n\kappa$ satisfies a nonstandard CLT (along with a local limit theorem) with positive-definite covariance matrix $\Sigma\in \R^{d\times d}$ and superdiffusive
normalisation $a_n=\sqrt{C n\log n}$ (for some constant $C>0$), that is
\[
\frac{S_n\kappa}{a_n} \text{ converges in distribution to } \mathcal{N}(0,\Sigma^2).
\]
These nondegeneracy conditions are as follows: a) in the case $d=2$,
one requires that there exist at least two nonparallel collisionless trajectories
in the interior of $\tD$; b) For $d=1$, one requires
that there exists a collisionless trajectory not orthogonal to the direction of the $\Z$-cover, which is equivalent to our assumption that $\kappa:M\to\Z$
is unbounded.

We recall that a different proof (via standard pairs) of the above mentioned non standard CLT has been provided
in~\cite{ChDo09} along with the corresponding weak invariance principle (WIP).
For other recent results on the Lorentz gas map and flow with infinite horizon, see~\cite{BBT, PT20, MPT, PT23}.

\begin{notation}\label{n:LLL}
Throughout the paper we will use the notation $\ell(t)$ for various slowly varying functions (see eg.~\cite{Rvaceva54}). Also, we will use the standard notations in probability,
$L(t)=\max(1,\log(t))$; $LL(t)=L(L(t))$, $LLL(t)=L(LL(t))$ etc.
\end{notation}

\subsection{Classical ASIP and LIL}

 The almost sure invariance principle (ASIP), also known as the strong invariance principle (SIP) or Strassen's invariance principle, is a much studied problem in probability theory, see \cite{Zaitsurv} for a survey.
 Let $S_n=\sum_{i=1}^n Y_i $ denote the sum of a stationary sequence of random variables taking values in $\R^d$, $d\ge 1$; then $S_n$ satisfies an ASIP
 if there exists some
 non-degenerate covariance matrix $\Sigma^2$ such that, possibly on an enlarged probability space,
 \begin{equation}\label{eq:standardasip}
 |S_n - \Sigma \cdot W(n)| = o(n^{1/2-\eps}), \text{ for some }\eps>0.
 \end{equation}
 almost surely as $n\to\infty$, where $W(n)=\sum_{i=1}^n Z_i $ denotes standard Brownian motion on $\R^d$ at time $n$, or equivalently, the variables $Z_i$ are i.i.d. standard Gaussians. It is known that~\eqref{eq:standardasip}
 implies the law of the iterated logarithms (LIL) (by knowing the LIL for the Brownian motion):
 \begin{equation}\label{eq:standardlil}
\limsup_{n\to\infty}  \dfrac{|S_n|}{\sqrt{n \cdot L(Ln)}} =a \qquad \text{ almost surely, for some } a\in (0,\infty).
 \end{equation}

 It can be assumed that the stationary sequence arises as $Y_i=Y\circ T^i$ for some probability preserving transformation (or dynamical system) $T:\Omega\to \Omega$ and $Y:\Omega \to\R^d$.

%

 The problem of the ASIP (with rates) has been studied for various classes of dynamical
 systems and observables -- that satisfy the classical central limit theorem (CLT) and weak invariance principle (WIP) -- by several authors, via different methods, see~\cite{merl, mNicol,mNicol2, Gou-asip,Kor18, CMKD} and
 references therein.
A common feature of these works is that they assume (along with some weak dependence  conditions) that the random variables satisfy $Y\in L^p$ for some $p>2$, and prove the ASIP with some rate
that depends on $p$.

\subsection{A certain ASIP and generalized LIL for independent sequences }

 A certain generalized form of the ASIP and of the LIL for i.i.d.\ random variables   taking values in $\R^d$ with infinite variance but still in the domain of attraction of a normal law  were obtained in the works of Einmahl~\cite{Ein87, Ein88, Ein07,EinLi}.
We will focus on the case of $\R^d$  regularly varying random variables $(Y_j)_{ j\ge 0}$ of index $-2$, but remark that in the case $d=1$, the results in~\cite{Ein07}
 are not limited to regular variation.
For the case $d=1$, regular variation of index $-2$  means that, as $t \to \infty$,
\[
 \pP(Y > t) = (p + o(1))\ell(t)t^{-2},\quad \pP(Y < -t) = (q + o(1))\ell(t)t^{-2}
\]
for some $p,q\ge 0, p+q=1$ and some slowly varying function $\ell$.

To recall the more general definition of regular variation in $\R^d$ (see~\cite{Rvaceva54}) let $\bbS = \{t \in \R^d : |t| = 1 \}$ denote the unit sphere in $\R^d$.
(Throughout, $|\;|$ denotes the Euclidean norm.)

\begin{defn}
\label{def-reg}
An $\R^d$-valued random variable $Y$ is
    \emph{regularly varying} with index $-\alpha<0$ if
    there exists a Borel probability measure $\sigma$
    on $\bbS$, such that
    \[
        \lim_{t \to \infty}
        \frac{\pP(|Y| > \lambda t, \ Y / |Y| \in A)}{\pP(|Y| > t)}
        = \lambda^{-\alpha} \sigma(A)
    \]
    for all $\lambda > 0$ and all Borel sets $A \subset \bbS$ with $\sigma(\partial A) = 0$.
We say that $Y$ is \emph{nondegenerate} if
$\int_\bbS|u\cdot\theta|^\alpha\,d\sigma(\theta)>0$ for all $u\in\bbS$.

Taking $A=\bbS$, we have that $|Y|$ is a regularly varying scalar
function.
Hence there exists a slowly varying function $\ell:[0,\infty)\to(0,\infty)$
such that $\pP(|Y|>t)=t^{-\alpha}\ell(t)$.
\end{defn}

Provided that $(Y_j)_{ j\ge 0}$ is an i.i.d sequence of random
variables in $\R^d$ defined on a probability space $(\Omega_0,\mathcal F_0, \pP_0)$
and that $Y_0$ is regularly varying
of index $-2$ with $\E(Y_0)=0$,
\begin{equation}
\label{eq:law1}
 a_n^{-1}\sum_{j=0}^{n-1} Y_j\implies \cN(0,\Sigma_0^2),\text{ as }n\to\infty,
\end{equation}
 where $\implies$ denotes convergence in distribution, $\Sigma_0$ is a positive definite $d\times d$ matrix
and $a_n$ is given by $\lim_{n\to\infty}\frac{n\tilde\ell(n)}{a_n^2}=1$, where $\tilde\ell(x)=1 + \int_1^{1+x} \frac{\ell(u)}{u} du$.

\begin{rmk}\label{r:Lorentzcase}
Given several important billiard applications (infinite horizon Lorentz gas, Bunimovich stadium, dispersing billiards with cusps), a case of special significance is when $\ell (t)\equiv C$ for some positive constant $C$,
and thus $\tilde\ell(t)\sim C\log t$. We will refer to this as the Lorentz case.
\end{rmk}

We recall the result in~\cite{Ein07,EinLi} for the sequence $(Y_j)_{j\ge 0}$
of independent random variables along with the required assumption.
\begin{itemize}
\item[(H0)] Let $(c_n)_{n\ge 1}$ be a non-decreasing sequence so that
\begin{itemize}
\item[(i)] for any $\eps>0$, there exists $M_\eps\ge 1$ so that
$\frac{c_n}{c_m}\le (1+\eps)\frac{n}{m}$ for all $M_\eps\le m<n$;
\item[(ii)] $\sum_n\pP_0(|Y_0|>c_n)<\infty$.
\end{itemize}
\end{itemize}
\begin{rmk}In the situation of Remark~\ref{r:Lorentzcase}, $\ell(n)\equiv C$ and $\sum_n\pP_0(|Y_0|>a_n)=\infty=\lim_{N\to\infty}\sum_{n\ge N}\frac{C}{a_n^2}$. Since $\sum_n\pP_0(|Y_0|>c_n)<\infty$, $\lim_{N\to\infty}\sum_{n\ge N}\frac{C}{c_n^2}=0$. This can only hold if $\frac{c_n}{a_n}\to\infty$, as $n\to\infty$. As in Corollary~\ref{cor-ein3} below, if the density $f(x)$ of the distribution function
 $\pP(Y_j \leq x)$ satisfies $f(x)\sim C\, |x|^{-3}1_{|x|\ge1}$
 then $c_n=a_n\ell_0(n)$, for some slowly varying function $\ell_0(n)\to\infty$, as $n\to\infty$.
\end{rmk}

\begin{thm}~\cite[Theorem 2]{Ein07},~\cite[Theorem 2.1]{Ein09}.
\label{thm-ein1}
 Let $(Y_j)_{ j\ge 0}$ be an i.i.d sequence of $\R^d$ random
variables, regularly varying with index $-2$, and satisfying~\eqref{eq:law1}. Let $(c_n)_{n \geq 1}$ be a sequence satisfying (H0).

 Then, there exists a probability space $(\Omega_1,\mathcal F_1, \pP_1)$
 and two sequences of independent random vectors
$(Y_j^*)_{ j\ge 0}$, $(Z_j)_{ j\ge 0}$ so that
\begin{itemize}
 \item $Y_j^*$ is distributed as $Y_j$
 \item $Z_j$ is distributed as $\cN(0,I)$
\end{itemize}
 and so that, almost surely, as $n\to\infty$,
 \[
   \left|\sum_{j=0}^{n-1} Y_j- \Gamma_n\,\sum_{j=0}^{n-1} Z_j\right|=o(c_n),
  \]
where the matrix $\Gamma_n$ is given by $\Gamma_n^2=cov(Y_0\,1_{|Y_0|<c_n})$.
 \end{thm}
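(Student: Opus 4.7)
The plan is the classical truncate--center--couple strategy for strong approximation of sums of independent heavy-tailed vectors. First, set $\tilde Y_j=Y_j\,1_{|Y_j|<c_j}$ and $R_j=Y_j-\tilde Y_j$. By (H0)(ii) and the first Borel--Cantelli lemma, $R_j=0$ for all sufficiently large $j$ almost surely, so $\sum_{j<n}R_j=O(1)=o(c_n)$. Next, center by defining $\bar Y_j=\tilde Y_j-\E\tilde Y_j$. Since $\E Y_0=0$, we have $\E\tilde Y_j=-\E[Y_0\,1_{|Y_0|\ge c_j}]$; for $Y_0$ regularly varying of index $-2$ this quantity is bounded by $c_j\,\pP_0(|Y_0|>c_j)+\int_{c_j}^\infty \pP_0(|Y_0|>t)\,dt$, and a standard slowly-varying computation using (H0)(i)--(ii) shows that $\sum_{j<n}\E\tilde Y_j=o(c_n)$. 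The problem thus reduces to approximating $\sum_{j<n}\bar Y_j$, a sum of independent mean-zero vectors with $|\bar Y_j|\le 2c_j$, by a Gaussian sum.

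For this I would partition $\N$ into dyadic blocks $B_k=\{j:2^k\le j<2^{k+1}\}$. By (H0)(i) the truncation scale is constant on $B_k$ up to a factor $1+o(1)$, so the variables inside $B_k$ are essentially identically distributed with range $\lesssim c_{2^{k+1}}$ and common covariance close to $\Gamma_{2^k}^2$. Apply Einmahl's quantitative multivariate strong Gaussian approximation for sums of bounded independent vectors block by block to obtain a coupling with independent Gaussian vectors $W_j$ of matching covariance $\mathrm{cov}(W_j)=\mathrm{cov}(\bar Y_j)$. Concatenating the block couplings on an enlarged probability space in the Berkes--Philipp manner, and summing the block errors (each a fixed negative power of $2^k$ times $c_{2^{k+1}}$), one obtains $\sum_{j<n}\bar Y_j-\sum_{j<n}W_j=o(c_n)$ almost surely.

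Finally, to recast $\sum_{j<n}W_j$ as $\Gamma_n\sum_{j<n}Z_j$ with i.i.d.\ $Z_j\sim\cN(0,I)$, write $W_j=A_jZ_j$ with $A_jA_j^{\top}=\mathrm{cov}(\bar Y_j)$; because $c_j\to\infty$ and $Y_0$ is regularly varying of index $-2$, a direct computation gives $A_j=\Gamma_j+o(1)$ in operator norm, and $\Gamma_j$ itself varies slowly in $j$. Combined with the Gaussian LIL $|\sum_{j<n}Z_j|=O(\sqrt{n\log\log n})$ a.s., summation by parts yields $\sum_{j<n}A_jZ_j-\Gamma_n\sum_{j<n}Z_j=o(c_n)$ almost surely. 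Adding the three error estimates proves the theorem. The main obstacle is the Gaussian approximation step: one needs a multidimensional strong approximation whose error is sharp enough to be absorbed into $o(c_n)$ uniformly as the truncation level $c_j$ grows with $j$. This is precisely the content of Einmahl's quantitative KMT-type bound, and with that tool in hand the remaining work is essentially bookkeeping.
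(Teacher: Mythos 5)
First, a remark on the comparison itself: the paper does not prove Theorem~\ref{thm-ein1}. It is quoted from Einmahl's work \cite{Ein88,Ein07,Ein09} and used as a black box; what the paper proves is the dependent analogue, Theorem~\ref{thm-main}, whose proof in Sections~\ref{sec:main}--\ref{sec:mainlem} mirrors Einmahl's strategy. Your skeleton --- truncate each $Y_j$ at level $c_j$, discard the tail by Borel--Cantelli via (H0)(ii), recenter (your Kronecker-type estimate $\sum_{j<n}|\E\tilde Y_j|=o(c_n)$ is correct), couple blockwise with Gaussians, then replace the $\Gamma_j$'s by $\Gamma_n$ --- is indeed the right skeleton, and the first, second and last steps are essentially sound.

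The genuine gap is the Gaussian coupling step, and it is not ``bookkeeping'': it is the entire difficulty of the theorem. On a dyadic block of length $N=2^k$ the truncated variables are bounded by $c_{2^k}\asymp 2^{k/2}(k\,\ell_1(2^k))^{1/2}$ and have fourth moments $\E|\tilde Y_j|^4\asymp c_j^2\,\ell(c_j)$. The bounded-variable KMT/Zaitsev approximation then yields a per-block error of order $(\text{range})\times\log N\asymp c_{2^k}\cdot k$, which \emph{exceeds} $c_{2^k}$; the fourth-moment version (Zaitsev's Corollary 3, the tool used in Lemma~\ref{lemma:gausshatynj}) yields an error $Mz_k$ with $M\asymp 2^{k/2}(k\,\ell_1(2^k))^{1/4}$ and $z_k\gg k^{1/4}(\log k)^{1/2}$ forced by Borel--Cantelli, i.e.\ an error $\asymp c_{2^k}(\log k)^{1/2}\ell_1(2^k)^{-1/4}$. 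This is $o(c_{2^k})$ only when $\ell_1(n)\gg (LL(n))^2$, which (H0) does not guarantee (take $c_n^2=n\,L(n)(LL(n))^{1+\eps}$ with $\eps<1$). So your claim that each block error is ``a fixed negative power of $2^k$ times $c_{2^{k+1}}$'' is false, and the appeal to ``Einmahl's quantitative KMT-type bound'' is either circular (if it means the very statement being proved) or insufficient (if it means a bounded/finite-moment coupling). The fix, both in Einmahl's proof and in the paper's dependent version, is a second, strictly lower truncation level $d_n$ as in \eqref{eq:dn}: the part $|v|\le d_{2^n}$ has a fourth moment small enough for Zaitsev's coupling to give an error $o(c_{2^n}/L(n))$, while the intermediate part $d_{2^n}\le |v|\le c_{2^n}$ is handled by a separate rare-events/maximal-inequality argument (see (HL0)(iii), (HL1) and Lemmas~\ref{lemma:d_n_c_n}, \ref{lemma:maxd_n_c_n}). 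Your sketch does not engage with this two-level truncation, which is exactly where the theorem lives.
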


When compared to the standard ASIP, a specific feature of Theorem~\ref{thm-ein1} is the rescaling of Brownian motion with the truncated covariance matrix
$\Gamma_n$, which occurs only for variables in the non-standard domain of attraction of the normal law. This reflects the fact that in this regime the limit phenomena are particularly
sensitive to the specifics of the tail of the distribution, which typically makes the treatment quite challenging.  Indeed, for SIP type results for i.i.d. random variables in the domain of a symmetric
stable law of index $\alpha\in (0,2)$ (see \cite[Theorems 3.1 and 3.2]{St79}) there is no need for truncation and thus, no need for an analogue
of $\Gamma_n$, and the main challenge is to obtain good rates (we refer to \cite[Section 3]{St79}). Note also that in this $\alpha<2$ case the approximation is, not by sums of independent Gaussians, but instead by sums of independent $\alpha$-stable variables, and thus there is no natural analogue of the sequence $\sqrt{n L(L(n))}$ in \eqref{eq:standardlil} (for the standard CLT case), or the sequence \eqref{eq:cnlg} below (in our case), see
\cite[Theorem 2.1]{St79}.  Thus, although as in the case of Theorem~\ref{thm-ein1}, there is infinite variance in the $\alpha<2$ stable law scenario too, the limit phenomena are quite different,
see also \cite{DolgLiu} for recent results in the dynamical context.

As in~\cite{EinLi, Ein07} the main purpose of
Theorem~\ref{thm-ein1} is the implication of a precise form of a ``generalized law of the iterated logarithm (LIL)'' for such i.i.d. processes, which we now recall.
\begin{thm}~\cite[Theorem 3]{EinLi},~\cite[Theorem 1]{Ein07}.
\label{thm-ein2}Assume the set up of Theorem~\ref{thm-ein1}.
 Set
 \begin{align*}
A(t)&=\sup \left\{ \E[\langle Y_0, y \rangle^2 1_{|Y_0|\le t}]; y\in \R^d, |y|\le 1\right\};\\
a&=\sup\left\{\alpha\ge 0: \sum_{n=1}^\infty \frac{1}{n}\exp\left(-\frac{\alpha^2c_n^2}{2n A(c_n)}\right)=\infty\right\}.
 \end{align*}
Then almost surely,
 \[
   \limsup_{n\to\infty}\frac{|\sum_{j=0}^{n-1} Y_j|}{c_n}=a.
  \]
 \end{thm}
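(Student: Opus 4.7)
The plan is to deduce Theorem~\ref{thm-ein2} directly from the strong approximation of Theorem~\ref{thm-ein1}. Since $|\sum_{j=0}^{n-1} Y_j - \Gamma_n\sum_{j=0}^{n-1} Z_j|=o(c_n)$ almost surely, dividing by $c_n$ reduces the problem to showing
\[
\limsup_{n\to\infty}\frac{|\Gamma_n \sum_{j=0}^{n-1}Z_j|}{c_n}=a \qquad \text{almost surely.}
\]
The quantity on the left is the modulus of a centred Gaussian vector with covariance $n\Gamma_n^2$, so the definition of $a$ is calibrated to the Gaussian tail via the identification $u^\top \Gamma_n^2 u \asymp A(c_n)$ uniformly in $u\in\bbS$, which I would first establish using the regular variation in Definition~\ref{def-reg} together with the nondegeneracy of $\sigma$. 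The truncation bias $\E(Y_0 1_{\{|Y_0|<c_n\}})=-\E(Y_0 1_{\{|Y_0|\ge c_n\}})$ is readily seen to be $o(\sqrt{A(c_n)/n})$ from (H0)(ii) and Markov's inequality, so it does not affect the LIL constant.

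For the upper bound $\limsup\le a$, I would fix $\alpha>a$, so that $\sum_n n^{-1}\exp(-\alpha^2 c_n^2/(2nA(c_n)))<\infty$, and apply the standard Gaussian tail estimate
\[
\pP_1\Bigl(\Bigl|\Gamma_n\sum_{j<n} Z_j\Bigr|>\alpha c_n\Bigr)\le \exp\bigl(-(1-\eta)\alpha^2 c_n^2/(2nA(c_n))\bigr)
\]
for any $\eta>0$ and all $n$ large. Summing along a geometric subsequence $n_k=\lceil(1+\delta)^k\rceil$ yields Borel--Cantelli on $(n_k)$. Assumption (H0)(i) controls $c_n/c_{n_k}$ within each block, and a L\'evy--Ottaviani type maximal inequality for Gaussian partial sums lifts the subsequence bound to all $n$.

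For the lower bound $\limsup\ge a$, I would fix $\alpha<a$, so the Einmahl series diverges, and choose a rapidly growing subsequence $n_k$ along which the block increments $B_k:=\Gamma_{n_k}\sum_{j=n_{k-1}}^{n_k-1}Z_j$ are independent. The Gaussian lower tail combined with the directional estimate gives
\[
\pP_1(|B_k|>\alpha c_{n_k})\gtrsim \exp\bigl(-\alpha^2 c_{n_k}^2/(2n_k A(c_{n_k}))\bigr).
\]
A thinning of $(n_k)$ tuned so the resulting series matches the defining series of $a$ preserves divergence, and the converse Borel--Cantelli lemma forces $|B_k|>\alpha c_{n_k}$ infinitely often. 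A Kolmogorov maximal estimate absorbs the partial sum up to $n_{k-1}$ into an error $o(c_{n_k})$, so the block increment dominates.

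The hard part will be matching the constant $a$ on both sides in the vector-valued regime. While $\mathrm{tr}(\Gamma_n^2)\sim A(c_n)$ is elementary, getting the sharp exponent in the Gaussian tail requires the two-sided directional estimate $u^\top \Gamma_n^2 u \asymp A(c_n)$ uniformly in $u\in\bbS$, where nondegeneracy of the limit measure $\sigma$ in Definition~\ref{def-reg} plays a decisive role. A secondary subtlety is synchronising the subsequence used in both directions so that the slowly varying oscillations of $c_n^2/(nA(c_n))$ do not destroy the sharp constant via (H0)(i). Once these estimates are in place, the remaining arguments are a fairly standard implementation of the Feller--Erd\H{o}s subsequence/Borel--Cantelli scheme, essentially as carried out in~\cite{Ein07, EinLi}.
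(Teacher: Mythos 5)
Your proposal is correct and takes essentially the same route as the paper: Theorem~\ref{thm-ein2} is quoted there as a known result of Einmahl--Li, and the paper's own commentary is precisely your reduction, namely that the strong approximation of Theorem~\ref{thm-ein1} reduces the claim to $\limsup_{n\to\infty}|\Gamma_n W(n)|/c_n=a$, whose proof via Gaussian tail estimates and Borel--Cantelli along geometric subsequences is delegated to \cite[Sections 4.2.1--4.2.2]{Ein09}. Your sketch of that Gaussian step, including the observation that for $d>1$ the sharp constant is governed by the extremal directional variance of $\Gamma_n^2$ (controlled by the nondegeneracy of the spectral measure $\sigma$), matches the cited argument.
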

 The definition of $A(t)$ in Theorem~\ref{thm-ein2} is exactly as in
 ~\cite[Theorem 1]{Ein07}, since $\sup \left\{ \langle w, y \rangle^2; |y|\le 1 \right\}=|w|^2$ for any vector $w\in\R^d$, $A(t)=\E(|Y_0|^2 1_{\{|Y_0|\le t\}})$.

Note that in Theorem~\ref{thm-ein2}, the limit $a$ can be $0$, infinite, or a positive finite number depending on the sequence $c_n$, and it is not always easy to say for which sequences we have $a\in(0,\infty)$, a situation which can be regarded as a ``generalized law of the iterated logarithm''.
In special cases both the limit $a$ and the sequence $(c_n)_{n \geq 1}$ can be described precisely:
\begin{cor}
\label{cor-ein3}
Consider an i.i.d.~sequence of $\R$-valued, symmetric random variables with tail behaviour as in Theorems~\ref{thm-ein1} and~\ref{thm-ein2}.
Assume, furthermore, that the density $f(x)$ of the distribution function
 $\pP(Y_j \leq x)$ satisfies $f(x)\sim C\, |x|^{-3}1_{|x|\ge1}$, for some $C>0$. Then (recall Notation~\ref{n:LLL}), according to
 \cite[p.~1622]{EinLi},
 \begin{align}\label{eq:cnlg}
 \text{If}
  &\ c^*_n=\sqrt{2C\cdot n\cdot L(n)\cdot LL(n) \cdot (1+ LL(n) \sin^2(LLL(n)) ) } \\
 \text{then }
\label{eq:a} &\limsup_{n\to\infty}\frac{|\sum_{j=0}^{n-1} Y_j|}{c^*_n}=1; \text{ almost surely}.
 \end{align}

\end{cor}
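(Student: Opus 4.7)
The plan is to apply Theorem~\ref{thm-ein2} directly: given the explicit density, I must (a) compute the truncated second moment $A(c_n)$, (b) verify hypothesis (H0) for the sequence $c_n$ in~\eqref{eq:cnlg}, and (c) determine the critical exponent $a$. Direct integration of $f(x)\sim C|x|^{-3}$ on $|x|\ge 1$ yields $A(t)=\int_{|x|\le t}x^2 f(x)\,dx\sim 2C\log t$ as $t\to\infty$. Since the polynomial-in-$LL(n)$ factor inside $c_n^2$ only contributes a lower-order correction to $\log c_n$, I get $\log c_n\sim \tfrac12 L(n)$, hence $A(c_n)\sim C L(n)$. Plugging in and cancelling gives
$$\frac{c_n^2}{2nA(c_n)}\sim LL(n)\bigl(1+LL(n)\sin^2 LLL(n)\bigr).$$

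Condition (H0)(i), $c_n/c_m\le(1+\eps)n/m$, will follow from the fact that $LL(n+1)-LL(n)=O((n\log n)^{-1})$ and $\sin^2 LLL$ varies even more slowly, so the bracket $1+LL(n)\sin^2 LLL(n)$ changes by a multiplicative factor $1+O((n\log n)^{-1})$ between consecutive $n$. For (H0)(ii), $\pP(|Y_0|>t)\sim Ct^{-2}$ reduces the condition to $\sum c_n^{-2}<\infty$. Using the double substitution $u=\log n,\ v=\log u$ (which turns $n^{-1}\,dn$ into $dv$), this is comparable to $\int dv/(v(1+v\sin^2\log v))$. Near a spike $v_k=e^{k\pi}$ of $\sin\log v$, the local expansion $\sin^2\log v\sim (v-v_k)^2/v_k^2$ reduces the spike contribution to $\int d\delta/(1+v_k\delta^2)=O(v_k^{-1/2})$, summable over $k$; away from spikes the integrand is $O(v^{-2})$, also integrable. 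The oscillatory factor $LL(n)\sin^2 LLL(n)$ in $c_n$ is precisely what makes this work, since $\sum 1/(n L(n)LL(n))$ would otherwise diverge.

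The heart of the argument is evaluating
$$S(\alpha)=\sum_{n\ge 3}\frac{1}{n}\exp\!\bigl(-\alpha^2 LL(n)\bigl(1+LL(n)\sin^2 LLL(n)\bigr)\bigr).$$
The same substitution $u=\log n,\ v=\log u$ shows $S(\alpha)$ is comparable to
$$\tilde S(\alpha)=\int_0^\infty\exp\!\bigl(-(\alpha^2-1)v-\alpha^2 v^2\sin^2\log v\bigr)\,dv.$$
Since $v^2\sin^2\log v\sim(v-v_k)^2$ near $v_k=e^{k\pi}$, the integrand is asymptotically a Gaussian bump of width $O(1)$ centered at each $v_k$, weighted by $e^{-(\alpha^2-1)v_k}$. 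If $\alpha>1$, the $k$-th bump contributes $O(e^{-(\alpha^2-1)v_k})$, which is summable, and the off-spike part is dominated by $e^{-(\alpha^2-1)v}$; hence $\tilde S(\alpha)<\infty$. If $\alpha=1$, each bump contributes $\sim\sqrt\pi$, and the sum over $k$ diverges. If $\alpha<1$, the integrand grows along the spikes. Thus $a=1$, and Theorem~\ref{thm-ein2} gives $\limsup|S_n|/c_n=1$.

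The main obstacle is the delicate spike scaling at the critical value $\alpha=1$: the specific exponents of $L(n)$, $LL(n)$ and the constant $2C$ in~\eqref{eq:cnlg} are calibrated so that $v^2\sin^2\log v$ produces bumps of order one in \emph{both} width and height, yielding a sum that is exactly divergent at $\alpha=1$ and convergent for $\alpha>1$. A secondary technical point is justifying the replacement of the sum $S(\alpha)$ by the integral $\tilde S(\alpha)$: away from the spikes this is standard monotone comparison, while near the spikes it uses the fact that $LLL$ varies slowly enough on $\N$ that many integers $n$ land inside each width-$O(1)$ interval in $v$. Both points reduce to elementary comparisons once the structure above is in place.
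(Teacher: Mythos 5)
Your proposal is correct and follows essentially the same route as the argument the paper relies on: the paper gives no proof of Corollary~\ref{cor-ein3} beyond citing \cite[p.~1622]{EinLi}, and both that reference and the paper's own Appendix~\ref{sec:cn} (which treats the companion condition (HL0)(iii)) use exactly your change of variables $u=\log n$, $v=\log u$ followed by localization of the integrand into width-$O(1)$ bumps at $v_k=e^{k\pi}$. The computations $A(t)\sim 2C\log t$, $A(c_n)\sim C\,L(n)$, and the resulting dichotomy (convergence for $\alpha>1$, divergence for $\alpha\le 1$, hence $a=1$) all check out; the only slip is cosmetic --- the spike contribution to the (H0)(ii) integral should read $v_k^{-1}\int d\delta/(1+\delta^2/v_k)$, which still gives the stated $O(v_k^{-1/2})$ bound.
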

The argument in~\cite[p.~1622]{EinLi} is given for $C=1$, but it is straightforward to
get the corresponding statement for $C\ne 1$.

\subsection{Main result: generalized ASIP and LIL for the Lorentz gas map with infinite horizon}
\label{sec:lg}

Let us recall the terminology and notations form section~\ref{ss:lgintro}. An important property of the cell function $\kappa$, which we shall exploit below, is
precise information on the corresponding density function.
More precisely, we recall from~\cite[Proposition 6]{SV07} that there exists a finite collection $\mathop{Corr}\subset \Z^2$ of vectors (the corridor vectors),
such that, for any $\xi\in\mathop{Corr}$, there exists $C_{\xi}>0$, such that (as $N\to\infty$)
\begin{align}\label{densk}
 \mu(\kappa= N \xi)\sim C_{\xi}\cdot N^{-3}.
\end{align}
Here the event $\kappa = N \xi$
indicates the next collision with a scatterer at distance $|N||\xi|$ from the current position. We note that this implies that $\ell(t)\equiv C$ for some $C>0$ (recall Remark~\ref{r:Lorentzcase}).

The result below is a version of Theorem~\ref{thm-ein1} (a certain ASIP) and Theorem~\ref{thm-ein2} (generalized LIL) for $\kappa$ and $\Phi$.

\begin{thm}
 \label{thm-LG}
 Consider the Sina\u{\i} billiard map $(M,T,\mu)$ with infinite horizon
 and either the cell-change function $\kappa:M\to\Z^d$ or the flight function
 $\Phi:M\to\R^d$.

 Let $c^*_n$ be defined as in~\eqref{eq:cnlg} (with exactly the same constant $C$ as in Remark~\ref{r:Lorentzcase}). Then

 \begin{itemize}
  \item[(i)]
 There exist a probability space $(\Omega^*,\mathcal F^*, \pP^*)$
 and two sequences of random vectors
$(v_j^*)_{ j\ge 0}$, $(Z_j)_{ j\ge 0}$ so that
\begin{itemize}
 \item[$\bullet$] $v_j^*$ is distributed either as $\kappa\circ T^j$ or
 as $\Phi\circ T^j$.
 \item[$\bullet\bullet$] the vectors $Z_j, j\ge 0$ are independent and  distributed as $\cN(0,I)$;
\end{itemize}
 such that, almost surely, as $n\to\infty$,
 \[
   \left|\sum_{j=0}^{n-1} v_j^*- \Gamma_n  \,\sum_{j=0}^{n-1} Z_j\right|=o(c^*_n),
  \]
where the matrix $\Gamma_n$ is given by $\Gamma_n^2=C\cdot \log(c^*_n)\,\Sigma^2$.\footnote{Actually, statement (i) of Theorem~\ref{thm-LG} holds not
only for $c_n^*$, but extends also to a more general class of sequences $c_n$, see condition (HL0) in section~\ref{sec:abstrsetup} below.}

 \item[(ii)]\(
   \limsup_{n\to\infty}\frac{\left|\sum_{j=0}^{n=1}\kappa\circ T^j\right|}{c^*_n}=a,
  \) almost surely, for some $a\in(0,\infty)$, and a similar statement holds for $\Phi$.
 \end{itemize}~\end{thm}


In Section~\ref{sec:abstrsetup} we state the main results needed for the proof of Theorem~\ref{thm-LG}, namely Propositions~\ref{thm-main} and~\ref{thm-main2}. These Propositions state, actually, an abstract version of the result, which will make the application
to the Lorentz gas with infinite horizon easier. Along with stating these key Propositions we also describe briefly the structure of the paper in section~\ref{sec:abstrsetup} below.

\section{Main ingredients for the proof of Theorem~\ref{thm-LG}}
\label{sec:abstrsetup}

In this section we phrase a set of  assumptions in terms of dynamical systems
 and observables associated with them. The abstract assumptions we consider below are designed to satisfy the set up of the Lorentz gas.

 Before stating the precise assumptions, let us give a summary
 of the ingredients we shall use in the paper.
 Besides Einmahl's results (in particular \cite{Ein09}) on the i.i.d case, two further important references for the proof of Theorem~\ref{thm-LG} are \cite{Gou-asip} and \cite{ChDo09}.

In \cite{Gou-asip}, Gou\"ezel proved the ASIP for a large class of processes with $Y\in L^p$ for some $p>2$. In our argument, we adapt Gou\"ezel's strategy, specifically with $p=4$,
to our situation after truncating the variables $Y_i$. The choice of the truncation level is delicate and the method requires very precise bounds on the second and the fourth moment for partial sums
of the appropriately
truncated random variables. Although in \cite{ChDo09} Chernov and Dolgopyat obtained moment bounds of similar character in the infinite horizon Lorentz gas, we improve significantly upon these bounds, in particular,
we manage to remove some logarithmic factors from the bound on the fourth moment, obtaining this way a scaling essentially as good as for i.i.d.
random variables, as expressed in condition (H3)(ii) below. Another important feature of our argument is that, additionally to the truncation level $c_n$, we introduce slightly lower truncation levels at $d_n$ and $\bd_n$, see Formula \eqref{eq:dn} and conditions (HL1-HL3) for details. This is partially inspired by \cite{ChDo09} which also uses further truncation levels, yet, this idea has to be implemented at the level of almost sure convergence
in our case.

The proof of Proposition~\ref{thm-main} is broken down into several Propositions and Lemmas in sections~\ref{sec:main},~\ref{sec:mainprop} and~\ref{sec:mainlem} -- this
is essentially an adaptation of Gou\"ezel's argument from \cite{Gou-asip}, which nonetheless exploits the properties of our variables -- expressed in the conditions of Proposition~\ref{thm-main} -- in a subtle way. These conditions are verified for the infinite horizon Lorentz gas in sections~\ref{sec:apl} and~\ref{sec:fm}. In particular, the above mentioned improvement on the bound of the fourth moment of partial truncated sums for the infinite horizon Lorentz gas, an interesting result on its own, is proved in Section~\ref{sec:fm}, and conditions (HL1-HL3) on the truncation levels $c_n$, $d_n$ and $\bd_n$ are verified in Section~\ref{sec:fm}, too. In the Appendix we check that the sequence \eqref{eq:cnlg} satisfies the required summability condition of (HL0), completing this way the proof of our ``generalized law of the iterated logarithm'' ie.~Theorem~\ref{thm-LG}(ii).

\subsection{Set up}
 \label{subsec:abstrsetup}

 We consider an invariant  probability measure preserving transformation $(\Omega,T, \mu)$.

Let $v: \Omega\to \R^d$  and set $v_j=v\circ T^j$. Below we formulate the assumptions (H1)-(H3) and (HL1)-(HL3) which will be used in our proofs and verified for the infinite horizon
Lorentz gas in the bulk of the paper.
Nonetheless, we include Remark~\ref{r:whyforLG} below to comment on the relevance of these assumptions for the infinite horizon
Lorentz gas.

Throughout, we assume
\begin{itemize}
\item[(H1)]
 $\E_\mu(v)=0$,  $v$ is regularly varying with index $-2$, let $\ell$ be the involved slowly varying function and require that
 \begin{equation}
\label{eq:lawd}
 a_n^{-1}\sum_{j=0}^{n-1} v_j\implies \cN(0,\Sigma^2),\text{ as }n\to\infty,
\end{equation}
 where $\Sigma$ is a positive definite $d\times d$ matrix
and where $a_n$ is so that $\lim_{n\to\infty}\frac{n\ell^*(n)}{a_n^2}=1$, where the slowly varying function $\ell^*$ satisfies $\ell^*(t)=C_0\tilde\ell(t)$, for some constant $C_0>0$. (Recall that $\tilde\ell(x)=1 + \int_1^{1+x} \frac{\ell(u)}{u} du$.)
\end{itemize}

\begin{rmk}\label{r:llstarLorentz}
In the Lorentz case billiard examples, the functions $\tilde\ell$ and $\ell^*$ are as follows. For the infinite horizon Lorentz gas, we have $\ell^*(t)=\tilde\ell(t)=C\log t$, with some $C>0$, that is,
we have $C_0=1$ and observe the same limit law as for iid sequences with the same tail behaviour (see \cite{BCDcusp}, \cite{ChDo09}). For Bunimovich stadia, we have $\ell^*(t)=C_0\tilde\ell(t)=C_0C\log t$ with $C_0=\frac{4+3\log 3}{4-3\log 3}$. Thus, the relevant slowly varying function differs from that of the iid case by a nontrivial constant factor, an effect which is due to short range correlations between consecutive large values (see \cite{BalintGouezel06}).
\end{rmk}

We are interested in the almost sure behaviour of $\sum_{j=0}^{n-1}v_j$,
in the sense of an analogue of Theorem~\ref{thm-ein1}, and with this
obtain an analogue of Theorem~\ref{thm-ein2}.

The remainder of our assumptions will be phrased in terms of the following truncated version of $v$. For  $R\in\R_{+}\cup\{\infty\}$ (and for any $l\ge 0$, consider also $R(=R_l)\in\R_{+}\cup\{\infty\}$) define
 \begin{align}\label{eq:wjdn}
W^R=v\cdot\mathbf{1}_{|v|\le R} \qquad \text{and accordingly} \qquad W_l^{R}=W^R\circ T^l =v_l\cdot\mathbf{1}_{|v_l|\le R}.
 \end{align}

To ensure `sufficiently weak dependence', we assume~\cite[Assumption(H)]{Gou-asip} at the level of characteristic functions of $n+m$ blocks derived from the truncated variables \eqref{eq:wjdn}, with gaps of size $k$ in between the two blocks. More precisely:
\begin{itemize} \item[(H2)]
There exist $\eps_0>0$, $C, c>0$ so that for any $n,m>0$, $b_1<b_2<\ldots< b_{r+m+1}$, for any $k>0$ and for all $t_1,t_2,\ldots, t_{n+m}\in\R^d$ with $|t_j|\le \eps_0$, and for arbitrary truncation levels $R_1,R_2,\ldots, R_{n+m}\in\R^+$
\begin{align*}\Big|\E_\mu &\left(e^{i\sum_{j=1}^n t_j(\sum_{l=b_j}^{b_{j+1}-1} W_l^{R_j})+i\sum_{j=n+1}^{n+m} t_j(\sum_{l=b_{j}+k}^{b_{j+1}+k-1} W_l^{R_j})} \right) \\
 & - \E_\mu\left(e^{i\sum_{j=1}^n t_j(\sum_{l=b_j}^{b_{j+1}-1} W_l^{R_j})} \right) \cdot \E_\mu\left(e^{i\sum_{j=r+1}^{r+m} t_j(\sum_{l=b_{j}+k}^{b_{j+1}+k-1} W_l^{R_j})}\right)\Big|\\
 &\le C(1+\max_{0\le j\le m+n}|b_{j+1}-b_j|)^{C(m+n)} e^{-ck}.
\end{align*}
\end{itemize}
In (H2) and throughout, we write $e^{itx}$ instead of $e^{i \langle t, x\rangle}$. Note, furthermore, that the variable $t_j$ in the characteristic function corresponds to the sum of the variables $W_l^{R_j}$ for
$b_j\le l<b_{j+1}$, which are all truncated at the same level $R_j$.

 We further assume some control of moments of sums of truncated observables
 along with a precise form of the covariance matrix.
\begin{itemize} \item[(H3)]
Let $W_j^{R}$ be as in~\eqref{eq:wjdn} with $R_j=R$ for all $j$.
We require that
\begin{itemize}
\item[(i)] $\E_\mu(W_j^R)=0$, for any $R$.
\end{itemize}
For both (ii) and (iii), consider $m\in \N$ (the number of terms) and $0<R(=R(m))$ (a truncation level), with $R\to \infty$ and $m\to \infty$, such that $m=o(R^2)$.
\begin{itemize}
\item[(ii)] Additionally, restricting to $m=o(R^{2-r_1})$ and $R=o(m^{r_2})$ for some $r_1>0$ and $r_2>0$,
we assume that\footnote{We will
always apply (H3)(ii) in situations when $m=2^{\alpha n}$ and $R=c_{2^n}$, where $n\to \infty$, $\alpha\in(0,1)$ and the sequence $c_k$  (in accordance with (H0)) is equal to $\sqrt{k}$
up to slowly varying factors. Hence $c_{2^n}$ is equal to $2^{n/2}$ up to factors that grow polynomially in $n$, and the requirements on the relation of $m$ and $R$ are satisfied. }  there exists $C>0$ so that
\(
\int \left|\sum_{j=0}^{m-1}  W_{j}^{R}\right|^{4}\, d\mu\le  C\, m\,R^{2}.
\)
\item[(iii)] Assume there exists $C>0$ so that $cov (\sum_{j=0}^{m-1} W_j^{R})= Cm\ell^*(R)\,\Sigma^2(1+o(1))$, where $\Sigma$ and $\ell^*$ are as in~\eqref{eq:lawd}.
\end{itemize}

\end{itemize}

We obtain a non iid version of Theorem~\ref{thm-ein1} for a slightly more restrictive
class of sequences $c_n$ than the one in  (H0).
For concreteness, we specialize to the Lorentz case (see Remark~\ref{r:Lorentzcase}), which in particular implies $\ell^*(R)\asymp \tilde\ell(R)\asymp L(R)$
(see conditions (H1) and (H3) above).
Concerning the class of sequences $c_n$, we make the following assumptions.

\begin{itemize} \item[(HL0)] $c_n=\sqrt{n \cdot \ell^* (n) \cdot \ell_1 (n)} \asymp \sqrt{n \cdot L(n) \cdot \ell_1 (n)}$ for some slowly varying function $\ell_1 (n)$, such that the following conditions hold.
\begin{itemize}
\item[(i)] There exists some $b>1$ such that $\ell_1(n)\ll (LL(n))^b$ (note this assumption does not weaken our results as it puts an upper bound on $c_n$)
\item[(ii)] $c_n$ satisfies (H0). As we assume Lorentz case, and $c_n$ is of the above form, this is equivalent to $\sum_n c_n^{-2} = \sum_n (n L(n) \ell_1(n))^{-1}<\infty$, in accordance with the previous item.
\item[(iii)] Our additional summability condition is formulated in terms of $c_{2^n}$. It is worth noting that $L(2^n)  \asymp n$ while $\ell_1(2^n) \ll (L(n))^b$, with $b$ as in the item (i) above. We assume $\sum_n \frac{2^n LL(n)}{(c_{2^n})^2}<\infty$, or equivalently  $\sum_n \frac{LL(n)}{n \ell_1(2^n)}<\infty$.
\end{itemize}
\end{itemize}

\begin{rmk}\label{rmkcn}
The  sequence $c_n$ defined in~\eqref{eq:cnlg} satisfies (HL0): while (i) is immediate and (ii) is proved in \cite{EinLi}, item (iii) requires a justification: see Appendix~\ref{sec:cn}.
Let us point out that we will also use explicitly the following bound, slightly weaker than (iii):
\begin{equation}\label{eq:HL0iiiw}
  \sum_n \frac{2^n}{(c_{2^n})^2}= \sum_n \frac{1}{n \ell_1(2^n)}<\infty;
\end{equation}
and note that, for the  sequence $c_n$ defined in~\eqref{eq:cnlg}, \eqref{eq:HL0iiiw} is proved in~\cite[p.~1622]{EinLi}.
\end{rmk}

We make some further assumptions that allow us to simplify the exposition.
To formulate them, we introduce two additional truncation levels,  $d_n$ and $\bd_n$, lower than $c_n$ (which is assumed to have the form specified in (HL0)). More precisely, let
\begin{align}\label{eq:dn}
 d_n &=\sqrt{n L(n) \ell_1(n)^{-99}}, \text{ while}\\
 \nonumber\bd_n& =n^{1/2-\varsigma}, \text{ for some small }\varsigma>0, \text{ to be specified later}.
 \end{align}
That is, the truncation level $d_n$ is just slightly smaller than $c_n$, while $\bd_n$ is even smaller.
(The details on choice of $\varsigma$ are specified inside the proof of Lemma~\ref{lemma:max} below.)

\begin{itemize} \item[(HL1)] This assumption concerns values between the truncation levels $d_{2^n}$ and $c_{2^n}$. Let $n\ge 1$ and set $\tW= v\cdot \mathbf{1}_{d_{2^n}\le|v|\le c_{2^n}}$.
Fix $r\ge 1$ and define $\tW_{l}=\tW\circ T^{l+r}$ for $l\ge 1$.
Fix $N\le (c_{2^n})^2$ and let $\tcS_N=\sum_{l=0}^{N} \tW_l$. We assume there exists $C_1>0$ such that:
\[
\E_\mu\left(\tcS_N^2\right)\le C_1 \cdot N\cdot L\left(\frac{c_n}{d_n}\right)\ll N\cdot LL(n).
\]

\item[(HL2)] This assumption concerns values between the truncation levels $\bd_{2^n}$ and $c_{2^n}$.
Let $n\ge 1$ and set $\bW= v\cdot \mathbf{1}_{\bd_{2^n}\le|v|\le c_{2^n}}$.
Fix $r\ge 1$ and define $\bW_{l}=\bW\circ T^{l+r}$ for $l\ge 1$.
Fix $K\in\N$ with $K< (\bd_{2^n})^2$. For $k=1,\dots, K$ and $\eps>0$, define
\[
\cS_k=\sum_{l=0}^{k} \bW_l;\qquad B_{k,\eps}=\{x\in M \,:\, |\cS_k|\ge \eps c_{2^n}\}
\]
and let
\(
N_{k,\eps}=B_{k,\eps}\setminus (\cup_{k'=0}^{k-1} B_{k',\eps}).
\)

We assume that there exists $C_2>0$ so that
 \[ \sum\limits_{k=1}^K
\mu(N_{k,\eps} \setminus B_{K,\eps/2})\le C_2 \frac{K}{(c_{2^n})^2}.
\]

 \item[(HL3)] This assumption concerns values lower than the truncation level $\bd_{2^n}$. Let $W_l^R$ as in~\eqref{eq:wjdn} and $\bd_{2^n}$ as above, and, for $k\ge 0$,  let
 \begin{equation}\label{eq:sigma_algebras}
 \mathcal{F}_{0,k}(W^{\bd_{2^n}}) =\sigma(W_0^{\bd_{2^n}},W_1^{\bd_{2^n}},\dots,W_k^{\bd_{2^n}})
 \end{equation}
 denote the sigma algebra generated by the random variables $W_0^{\bd_{2^n}},W_1^{\bd_{2^n}},\dots,W_k^{\bd_{2^n}}$. We assume that,
given $q(n)=\delta_0\frac{c_{2^n}}{\bd_{2^n}}$
 with $\delta_0\in (0,1)$,
 \[
  \left\|\E_\mu\left(W_{k+q(n)}^{\bd_{2^n}}\Big|\mathcal{F}_{0,k}(W^{\bd_{2^n}})\right)\right\|_{L^1(\mu)}\le C \left(\bd_{2^n} \gamma^{q(n)-C_3\log(\bd_{2^n})}  + (\bd_{2^n})^{-10}\right),
 \]
 for some $\gamma\in (0,1)$
 and for some $C>0$, $C_3>0$, uniformly in $k$.

\end{itemize}

\begin{rmk}\label{r:whyforLG}
Assumptions (H1)-(H3) and (HL1)-(HL3) may seem a bit unusual at first sight, nevertheless, these quite naturally reflect the following properties of the infinite horizon Lorentz gas.
\begin{description}
\item \textit{Tail behaviour.} The cell change function $\kappa$ (and thus the flight function $\Phi$) have tail distributions as expressed in (H1), actually, with the specific slowly varying functions as in Remark~\ref{r:Lorentzcase}.
\item \textit{Weak dependence.} The values of $\kappa\circ T^i$ decorrelate rapidly in $i$, which is expressed for example in (H2). Conditions (H3)(ii) and (H3(iii) also follow for the Lorentz gas  from the specific tail behaviour and the weak form of dependence. In particular, with the tail behaviour of Remark~\ref{r:Lorentzcase}, $\mathbb{E}_{\mu}(v^4\cdot \mathbf{1}_{|v|<R})\asymp R^2$, hence (H3)(ii) states (taking into account also the relations of $m$ and $R$) that the fourth moment in this case essentially behaves as if the variables were i.i.d.
\item \textit{Symmetry of the distribution.} In the Lorentz gas, by symmetry, $\kappa$ and $-\kappa$ have the same distribution (and the same holds also for $\Phi$). This naturally implies that the mean of all truncated variables vanish, as expressed in (H3)(i). We believe that this assumption is not essential for our method, and could be relaxed with some extra work. However, we do not push this point here to avoid further increase of the length of the paper.
\end{description}
The further technical conditions (HL1)-(HL3) also reflect these properties. Finally, we note hat the above three features (tail behaviour, weak dependence and symmetric distribution) are relevant not only for the Lorentz gas, but also for other examples of mechanical origin (for example, stadia or dispersing billiards with cusps), thus we hope that our scheme can be extended to such further cases, but do not study this in the present paper.
\end{rmk}

\subsection{Main propositions used in the proof of Theorem~\ref{thm-LG}}

The first result below is an analogue of Theorem~\ref{thm-ein1} in the non independent set up of subsection~\ref{subsec:abstrsetup}.

 \begin{prop}
 \label{thm-main}
  Assume (H1)--(H3). Let $(c_n)_{n \geq 1}$ be a sequence satisfying (HL0).
  Suppose that conditions (HL1)-(HL3) hold.
  Then, there exist a probability space $(\Omega^*,\mathcal F^*, \pP^*)$
 and two sequences of random vectors
$(v_j^*)_{ j\ge 0}$, $(Z_j)_{ j\ge 0}$ so that
\begin{itemize}
 \item $v_j^*$ is distributed as $v_j$;
 \item the vectors $Z_j, j\ge 0$ are independent and  distributed as $\cN(0,I)$;
\end{itemize}
 such that, almost surely, as $n\to\infty$,
 \[
   \left|\sum_{j=0}^{n-1} v_j^*- \Gamma_n  \,\sum_{j=0}^{n-1} Z_j\right|=o(c_n),
  \]
where the matrix $\Gamma_n$ is given by $\Gamma_n^2=\ell^*(c_n)\,\Sigma^2$.\\
Note that $\sum_{j=0}^{n-1} Z_j=W(n)$, where $W(n)$ is standard Brownian motion in dimension $d$, at time $n$.
\end{prop}

The proof of Proposition~\ref{thm-main} is provided in Section~\ref{sec:main}. It relies on a combination of arguments/results in~\cite{Gou-asip} under (H2) and (H3) together with arguments/results in~\cite{Ein07} (some
established already in previous works by the same author).

With exactly the same proof as in~\cite{Ein09}, Proposition~\ref{thm-main}
gives the Generalized LIL (the analogue of Theorem~\ref{thm-ein2}):

\begin{prop}
 \label{thm-main2}
 Assume (H1)--(H3). Let $(c_n)_{n \geq 1}$ be a sequence satisfying (HL0)
 and suppose that conditions (HL1)-(HL3) hold.
  Set
 \[
 A(t)=\ell^*(t) \|\Sigma\|^2,\,\,\,a=\sup\left\{\alpha\ge 0: \sum_{n=1}^\infty \frac{1}{n}\exp\left(-\frac{\alpha^2c_n^2}{2n A(c_n)}\right)=\infty\right\}.
\]
  Then almost surely,
 \[
   \limsup_{n\to\infty}\frac{\left|\sum_{j=0}^{n=1}v_j\right|}{c_n}=a.
  \]
\end{prop}
In statement of Proposition~\ref{thm-main2}, we use the operator norm $ \|A\|= \sup\{|Av|:|v|\le 1 \}$ as in~\cite[page 20]{Ein09}. With this norm, $A(c_n)=\|\Gamma_n\|$.

While  the proof of Theorem~\ref{thm-ein1} in~\cite{Ein09} heavily relies on the
i.i.d. structure, the proof of Theorem~\ref{thm-ein2} given Theorem~\ref{thm-ein1} does not require
independence. In the same way, Proposition~\ref{thm-main} gives Proposition~\ref{thm-main2}.
Indeed, given Proposition~\ref{thm-main}, Proposition~\ref{thm-main2} reduces to
\[
 \limsup_{n\to\infty}\frac{\left|\Gamma_n \cdot W(n)\right|}{c_n}=a,
\]
which is proved in \cite[proof of Corollary 2.4]{Ein09} in \cite[section 4.2.1]{Ein09} and \cite[section 4.2.2]{Ein09}, cf.~in particular
\cite[Formulas (4.4)-(4.5)]{Ein09} and \cite[Formulas (4.8)-(4.9)]{Ein09}.

Thus, our only task is to prove Proposition~\ref{thm-main}. Proposition~\ref{thm-main}
is the main technical contribution of this paper and its proof is provided in Section~\ref{sec:main}.

\vspace{-2ex}
\paragraph{Notations.}
Throughout the proofs, we use ``big O'' and $\ll$ notation interchangeably, writing $b_n=O(c_n)$ or $b_n\ll c_n$
if there are constants $C>0$, $n_0\ge1$ such that
$b_n\le Cc_n$ for all $n\ge n_0$. Furthermore, $b_n\asymp c_n$ means that $b_n\ll c_n$ and $c_n\ll b_n$ hold simultaneously. This is not to be confused with the stronger property
$b_n \sim c_n$ which means that $b_n= c_n \cdot (1+o(1))$.
\\

\section{Proof of Theorem~\ref{thm-LG}}
\label{sec:apl}

In this section we derive Theorem~\ref{thm-LG}
from Propositions~\ref{thm-main} and~\ref{thm-main2} with $c_n$ be defined as in~\eqref{eq:cnlg}.

We already know that such $c_n$ satisfies (HL0): see Remark~\ref{rmkcn}.
Thus, to obtain (i) of Theorem~\ref{thm-LG}, we need to verify that assumptions (H1)--(H3) hold. In this scenario, the probability measure preserving map $(\Omega,T,\mu)$
  is replaced by $(M,T,\mu)$ and
  the truncated observable $W_l^{R_l}$ as in~\eqref{eq:wjdn} is defined with $\kappa$ (respectively, $\Phi$) instead of $v$.
  It is known that $\kappa$ and $\Phi$ satisfy (H1), that is they are regularly varying observables with index $-2$ and as shown in~\cite{SV07} (via the Young tower constructed in~\cite{Chernov99,
 Young98}),
 $\kappa$ and $\Phi$ satisfy~\eqref{eq:lawd} with $a_n=\sqrt{n\log n}$.
 It is known that $\mu(\kappa)=0$ and (H3)(i) holds by symmetry.
 (H3)(iii) is known to hold with $C=2$ and $\ell^*=\log$ (see first displayed formula in ~\cite[Proof of Lemma 11.3]{ChDo09}).
 (H3)(ii)  and (HL1)-(HL3)) are verified in Section~\ref{sec:fm}. (H2) is verified below.\\

Item (ii) of Theorem~\ref{thm-LG} follows from Proposition~\ref{thm-main2}.
The precise description of $c_n$ and limit $a$ is possible because
using~\eqref{densk} we obtain that $A(t)=C L(t)\|\Sigma\|^2$; here $C$ is the same as in~\eqref{densk}.
Given $c_n$  as in~\eqref{eq:cnlg} (with exactly this positive $C$),
it is easy to see that  $a=1$.\\

 \textbf{Verifying (H2).}
We first recall a fact established in~\cite{Gou-asip}, which shows that (H2)
holds under very mild assumptions on the transfer operator.

Let $T:\Omega\to\Omega$ be a non-singular transformation w.r.t.\, $m=Leb$ and let $L:L^1(m)\to L^1(m)$  be the transfer operator given by $\int_\Omega Lf\,g\, dm=\int_\Omega f\,g\circ T\, dm$, for $f\in L^1(m)$ and $g$ bounded and measurable.

Now we define perturbations of the transfer operator $L$ associated to our observable $v:\Omega\to\R^d$ (satisfying (H1)) and its truncations $W^R$ (as defined in Formula \eqref{eq:wjdn}).
Given a perturbation parameter $t\in\R^d$ and a truncation level $R\in\R^+\cup\{\infty\}$, let
\[
L_{t,R} f=L(e^{itW^R} f).
\]
For $t=0$ and arbitrary $R$, we apparently have $L_{0,R}=L_0=L$.

The main difference between our (H2) and \cite[Condition (H)]{Gou-asip} is that (H2) involves several truncation levels, but this does not create a problem in checking that (H2) holds under mild conditions
on the perturbed operators $L_{t,R}$. Indeed,
we can proceed essentially as in~\cite[Section 2]{Gou-asip}.

To start, let us slightly generalize the following notion from \cite[Formula (2.2)]{Gou-asip}.

\begin{defn} \label{def:code}
Recall the setting and the notation of section~\ref{sec:abstrsetup}, in particular, let $\mu$ be an invariant measure for $T$.
Suppose that there exists  a Banach space $\mathcal B$ (of functions or distributions), and moreover, some $u_0\in\cB$ and $\xi_0\in\cB'$ (the dual of $\cB$) so that for any $t_l\in\R^d$, $|t_l|\le\eps_0$ with $0\le l\le n-1$, and any $R_l\in\R^+\cup\{\infty\}$, again with $0\le l\le n-1$, we have
 \begin{align}\label{eq:exprtri}
  \E_\mu(e^{i\sum_{l=0}^{n-1}t_l W_l^{R_l}})=
  \langle\xi_0,L_{t_{n-1},R_{n-1}}L_{t_{n-2}, R_{n-2}}\ldots L_{t_1, R_1} L_{t_0, R_0}u_0\rangle.
 \end{align}
 In this case we say that the random variables $v\circ T^l$  along with their  truncations $W_l^{R_l}$ are coded by $(\cB,L_{t,R},u_0,\xi_0)$.
\end{defn}

\begin{lemma}{~\cite[Section 2]{Gou-asip}}\label{lemma:wcontin} Suppose that the random variables $v\circ T^l$ are coded by some $(\cB,L_{t,R},u_0,\xi_0)$ in the sense above.
Assume, furthermore, that the family of operators $L$ and $L_{t,R}$ satisfy the properties (I1) and (I2) below.
 \begin{enumerate}
  \item[(I1)] There exists $C>0$ and $\delta_0<1$ such that $L(=L_0)$ as an operator acting on $\cB$  satisfies $L=\Pi+Q$ where $\Pi$ is a one dimensional projection, while $\Pi Q=Q \Pi$ and for any $n\ge 1$
  $\|Q^n\|_{\cB\to\cB}\le C\delta_0^n$.
  \item[(I2)]  There exists $C'>0$ and $\eps_0>0$ such that for all $|t|\le\eps_0$, all $R\in\R^+\cup\{\infty\}$, and all $n\in\N$ we have $\|L_{t,R}^n\|_{\cB\to\cB}\le C'$.
 \end{enumerate}
 Then (H2) holds.\\
 Moreover, instead (I2) it suffices to check the following (strong) continuity condition.
 \begin{enumerate}
 \item[(I2c)] The family $L_{t,R}$ depends, in the operator norm of the space $\cB$, continuously on the parameter $t\in\R^d$ at $t=0$, uniformly in the truncation level $R$. That is $\|L_{t,R}-L\|_{\cB\to\cB}\to 0$ as $t\to 0$, uniformly in the truncation level $R$.
\end{enumerate}
\end{lemma}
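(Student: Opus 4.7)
The plan is to translate both sides of (H2) into transfer-operator expressions through~\eqref{eq:exprtri}, and then exploit the spectral gap of $L$ given by (I1). Using $T$-invariance of $\mu$ to shift so that $b_1=0$, the joint characteristic function on the LHS of (H2) becomes $\langle \xi_0, A_2 L^k A_1 u_0\rangle$, and the product on the RHS becomes $\langle \xi_0, A_1 u_0\rangle\cdot\langle \xi_0, A_2 u_0\rangle$, where
\[
A_1 = L_{t_n,R_n}^{b_{n+1}-b_n}\cdots L_{t_1,R_1}^{b_2-b_1},\quad A_2 = L_{t_{n+m},R_{n+m}}^{b_{n+m+1}-b_{n+m}}\cdots L_{t_{n+1},R_{n+1}}^{b_{n+2}-b_{n+1}},
\]
are the composed block operators and the middle factor $L^k$ encodes the $k$-step gap, in which no $e^{itW^R}$-insertions occur.

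\textbf{Main argument under (I1) and (I2).} Substituting $L^k=\Pi+Q^k$, one observes that the rank-one projection $\Pi$ (which commutes with $L$, is normalized by $\Pi u_0 = u_0$ and $\xi_0(u_0)=1$ from specializing~\eqref{eq:exprtri} to $t=0$ and letting $n\to\infty$, hence takes the form $\Pi f = \xi_0(f) u_0$) produces the algebraic identity
\[
\langle \xi_0, A_2 \Pi A_1 u_0\rangle = \xi_0(A_1 u_0)\cdot\xi_0(A_2 u_0),
\]
which matches the RHS product exactly. The task therefore reduces to bounding $|\langle \xi_0, A_2 Q^k A_1 u_0\rangle|\le \|\xi_0\|_{\cB'}\|A_2\|\,\|Q^k\|\,\|A_1\|\,\|u_0\|_\cB$. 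By (I1), $\|Q^k\|\le C\delta_0^k$ supplies the $e^{-ck}$ decay; by (I2), each composed factor $L_{t_j,R_j}^{b_{j+1}-b_j}$ has norm $\le C'$ (crucially, uniformly in the block length), giving $\|A_1\|\le(C')^n$ and $\|A_2\|\le(C')^m$. The resulting bound $\tilde C (C')^{m+n}\delta_0^k$ is comfortably absorbed into $(1+\max_j|b_{j+1}-b_j|)^{C(m+n)}e^{-ck}$ by enlarging $C$.

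\textbf{Recovering (I2) from (I2c).} When only (I2c) is assumed, the strategy is to derive (I2) via analytic perturbation theory. Under (I1) the unperturbed $L$ has a spectral gap, and by (I2c) the family $L_{t,R}$ is a uniformly small perturbation of $L$ on $|t|\le\eps_0$. Standard (Kato) perturbation theory then yields a decomposition $L_{t,R} = \lambda(t,R)\Pi_{t,R} + Q_{t,R}$ with $\Pi_{t,R}$ rank-one and close to $\Pi$, $\lambda(t,R)$ close to $1$, and $\|Q_{t,R}^n\|\le \tilde C\rho^n$ for some fixed $\rho<1$, all uniformly in $R$ and $|t|\le\eps_0$. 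The only nontrivial point is the bound $|\lambda(t,R)|\le 1$, which I would derive from the elementary contraction $\|L_{t,R}\|_{L^1(m)}\le 1$ (since $|e^{itW^R}|\equiv 1$): this pins the spectral radius in $L^1(m)$ at $1$ and hence bounds the isolated eigenvalue $\lambda(t,R)$ of the $\cB$-extension. Together this yields $\|L_{t,R}^n\|_\cB\le\|\Pi_{t,R}\|+\tilde C$ uniformly, which is (I2), and the argument of the previous paragraph applies. I expect the main subtlety throughout to be maintaining uniformity of all estimates in the truncation level $R$; this is precisely what the hypotheses (I2) / (I2c) are designed to supply, and it is essential because (H2) is quantified over arbitrary choices of $R_1,\ldots,R_{n+m}$.
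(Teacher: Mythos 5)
Your proposal is correct and follows essentially the same route as the paper: translate (H2) through the coding, split $L^k=\Pi+Q^k$ in the gap so that the $\Pi$-term reproduces the product of characteristic functions and the $Q^k$-term is bounded by $(C')^{m+n}C\delta_0^k$ via (I1) and (I2), and recover (I2) from (I1) and (I2c) by a perturbation argument uniform in $R$. The one place you deviate is in justifying $|\lambda(t,R)|\le 1$ via the contraction $\|L_{t,R}\|_{L^1(m)}\le 1$; this needs a compatible embedding of $\cB$ into $L^1(m)$ (not automatic for anisotropic spaces of distributions), whereas the paper's source argument deduces it directly from boundedness of the coded characteristic functions $\langle\xi_0,L_{t,R}^n u_0\rangle$.
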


\begin{proof}
Let us note first that conditions (I1) and (I2c) together imply condition (I2). This is proved for one parameter families $L_t$ in \cite[Proposition 2.3]{Gou-asip}.
To see that this extends to two parameter families $L_{t,R}$ if the continuity is uniform in $R$, recall that the essence of the argument is that under (I1) and (I2c),
there exists some decomposition $L_{t_R}=\lambda_{t,R} \Pi_{t,R} + Q_{t,R}$ such that $\|Q_{t,R}^n\|\le C\delta_0^n$ for small enough $t$, uniformly in $n$ and $R$, $\Pi_{t,R}$
is a one dimensional projection that limits to $\Pi$ as $t\to 0$, and thus $|\lambda_{t,R}|\le 1$ for small $t$, again uniformly in $R$.

Now let us proceed to the verification of (H2) under (I1) and (I2). To simplify the exposition, we introduce the following shorthand notations. Keeping in mind that for any fixed $t_j$, ($j=1,\dots , n+m$), in (H2) we consider the sum of the random variables $W_l^{R_j}$, with $b_j\le l <b_{j+1}$, all of which are truncated at the same level $R_j$, let
\[
L_j =L_{t_j,R_j};\qquad \text{i.e.}\qquad L_jf=L(e^{it_jW^{R_j}} f) \qquad \text{and}\quad e_j=b_{j+1}-b_j.
\]
We record the following immediate consequence of (I2):
\begin{equation}\label{eq:ljdj}
\|L_j^{e_j}\|_{\cB\to\cB}\le C', \quad \text{uniformly in } e_j.
\end{equation}

As the characteristic functions are coded by our perturbed transfer operators in the sense of Definition~\ref{def:code}, using the above notations, the left hand side of (H2) can be written as
\begin{align*}
 &\E_\mu\left( e^{i \sum_{j=1}^n t_j \sum_{l=b_j}^{b_{j+1}-1} W_{l}^{R_{j}} + i \sum_{j=n+1}^{n+m} t_j \sum_{l=b_j+k}^{b_{j+1}+k-1} W_{l}^{R_{j}} } \right) \\
 &= \left\langle \xi_0 \ , \
 L_{n+m}^{e_{n+m}}
 \cdots
  L_{n+1}^{e_{n+1}}
  L^k
   L_{n}^{e_{n}}
   \cdots
   L_{1}^{e_1}
 u_0 \right\rangle \\
 &= \left\langle \xi_0 \ , \
 L_{n+m}^{e_{n+m}}
 \cdots
  L_{n+1}^{e_{n+1}}
  (L^k-\Pi)
   L_{n}^{e_{n}}
   \cdots
   L_{1}^{e_1}
 u_0 \right\rangle\\
 & + \left\langle \xi_0 \ , \
 L_{n+m}^{e_{n+m}}
 \cdots
  L_{n+1}^{e_{n+1}}
  (\Pi)
   L_{n}^{e_{n}}
   \cdots
   L_{1}^{e_1}
 u_0 \right\rangle.
\end{align*}

This together with the bounds in (I1) (in particular $\|Q^n\|_{\cB\to\cB}\le C \delta_0^n$) and \eqref{eq:ljdj},
allows us to proceed exactly the same way as in~\cite[Step 2, (H) holds, Subsection 2.5]{Gou-asip} to conclude that (H2) holds.~\end{proof}

Given Lemma~\ref{lemma:wcontin}, we just need to recall previous results
on the Sina\u{\i} billiard map $(M,T,\mu)$ with infinite horizon
 and the cell-change function $\kappa:M\to\Z^d$, where (I1) and (I2c) were shown to hold in
  the anisotropic Banach spaces
  constructed for the billiard map in~\cite{DZ11, DZ13, DZ14}, and further studied in \cite{BBT}.
We recall that the probability measure preserving map $(\Omega,T,\mu)$
  is replaced by $(M,T,\mu)$ and
  the truncated observable $W_l^{R_l}$ is defined with $\kappa$ instead of $v$.

\textbf{Condition (I1) in Lemma~\ref{lemma:wcontin}} is stated for example in \cite[Theorem 2.5]{DZ11}, \cite[Theorem 2.2]{DZ13}, or \cite[Theorem 2.4]{DZ14}.
It is recalled also in \cite[Section 5.2]{BBT}.

\textbf{Condition (I2c) in Lemma~\ref{lemma:wcontin}} is verified in a stronger form
in~\cite[Section 5.1]{BBT}. Literally, \cite{BBT} considers only the untruncated case $R=\infty$, in particular, \cite[Formula (29)]{BBT} implies that there exists some exponent $\nu>0$ and a constant $C>0$ such that
\[
\|L_{t,\infty}-L \|_{\cB\to\cB} \le C|t|^\nu.
\]
In fact, \cite[Formula (29)]{BBT}  also investigates how the modulus of continuity depends on the size of the scatterers, but this can be taken into account as a uniform constant in the present case.
To see that the estimate extends to finite truncation levels $R<\infty$  with the same $\nu$ and $C$, note that the argument in~\cite[Section 5.1]{BBT} relies on the growth lemma type estimates \cite[Formula (19)]{BBT} and
\cite[Formula (25)]{BBT}, along with the bound $\left|(e^{it\kappa}-1)|_{W_i} \right|\ll |t|^{\nu} |\kappa|^{\nu}|_{W_i}$, where $W_i$ is some unstable curve. Now the effect of the truncation is that for unstable curves $W_i$ on which $|\kappa|$ exceeds $R$ the difference  $e^{it\kappa^R}-1$ simply vanishes, thus these curves make no contribution, and the total sum is definitely less than in the untruncated case $R=\infty$.

\section{Proof of Proposition~\ref{thm-main}}
\label{sec:main}

The crucial ingredient for the proof of Proposition~\ref{thm-main} is Proposition~\ref{prop:cr}. To state it, let us introduce the following notation:
\begin{equation}\label{eq:whatishj}
\text{Given } j\ge 1, \text{ let }\hj=2^{\lfloor\log_2 j\rfloor},
\end{equation}
that is, $\hj$ denotes the smallest power of $2$ that does not exceed $j$.

\begin{prop}\label{prop:cr} Assume (H1)--(H3). Let $(c_n)_{n\ge 1}$ be defined as in (HL0).
Given $j\ge 1$, let $\hj$ be as in \eqref{eq:whatishj}, and let $W_j^{c_{\hj}}$ be as in~\eqref{eq:wjdn} with $R=c_{\hj}$.

There exists a sequence of independent Gaussian random vectors $Z_j$ distributed as $\mathcal N(0,I)$  so that almost surely, as $n\to\infty$,
\[
 \left|\sum_{j=1}^n(W_j^{c_{\hj}}-\sigma_jZ_j)\right|=o(c_{n}),
\]
where $\sigma_j$ is a positive definite symmetric matrix so that $\sigma_j^2=\ell^*(c_{\hj})\,\Sigma^2(1+o(1))$(with $\Sigma$ and $\ell^*$ as in (H3)(iii)).
\end{prop}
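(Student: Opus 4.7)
The strategy is to adapt Gou\"ezel's ASIP method~\cite{Gou-asip} to the heavy-tailed setting, combined with the two-truncation trick of~\cite{ChDo09}. Decompose the positive integers into dyadic blocks $\Delta_k=[2^k,2^{k+1})$: on each $\Delta_k$ the truncation $c_{\hj}=c_{2^k}$ and the target per-step covariance $\sigma_j^2=\ell^*(c_{2^k})\,\Sigma^2(1+o(1))$ are constant. It thus suffices, for each $k$, to produce a Gaussian vector of covariance $2^k\sigma_k^2$ coupled to the block sum $\sum_{j\in\Delta_k}W_j^{c_{2^k}}$ with error $o(c_{2^k})$ in a form that is summable across $k$. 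Within each block I split
\[
W_j^{c_{2^k}}=W_j^{d_{2^k}}+\tW_j^{(k)},\qquad \tW_j^{(k)}=v_j\,\mathbf{1}_{d_{2^k}<|v_j|\le c_{2^k}},
\]
and handle the two pieces separately.

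For the annulus piece $\sum_{j\in\Delta_k}\tW_j^{(k)}$, (HL1)(i) supplies a variance bound $O(2^k\,LL(k))$, while (HL1)(ii), once unpacked, is a maximal-type inequality which together with Chebyshev yields $\mu(\max_{J\in\Delta_k}|\sum_{j\in\Delta_k,\,j\le J}\tW_j^{(k)}|\ge\eps\,c_{2^k})\ll 2^k\,LL(k)/c_{2^k}^2$. Summability of this tail in $k$ is exactly condition (HL0)(iii), so Borel--Cantelli yields that the annulus contribution, maximally over partial sums inside $\Delta_k$, is $o(c_{2^k})$ almost surely, summably in $k$. The per-step variance discrepancy $\ell^*(c_{2^k})-\ell^*(d_{2^k})\asymp LL(k)$ between the target $\sigma_k^2$ and the covariance of the Gaussian produced at truncation $d_{2^k}$ in the next step is then absorbed by adjoining, on the enlarged probability space, independent Gaussians of precisely the matching magnitude $\sqrt{2^k\,LL(k)}$.

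For the core piece $\sum_{j\in\Delta_k}W_j^{d_{2^k}}$, run the Bernstein-block scheme of~\cite{Gou-asip} inside $\Delta_k$: subdivide $\Delta_k$ into consecutive sub-blocks of length $m_k\sim 2^{\alpha k}$ for some $\alpha\in(0,1)$, separated by gaps of polylogarithmic length in $k$, with common truncation $R=d_{2^k}$. Condition (H2) then yields near-independence of the sub-block sums with exponentially small error in the gap length; (H3)(iii) identifies the per-sub-block covariance as $m_k\,\ell^*(d_{2^k})\,\Sigma^2(1+o(1))$; (H3)(ii) provides the $L^4$ bound $m_k\,d_{2^k}^2$ for each sub-block sum. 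A Skorokhod/Berkes--Philipp-type embedding, carried out as in~\cite[Sec.~3--4]{Gou-asip}, then couples each sub-block sum to a Gaussian of matching covariance, with coupling error controlled by the $L^4$ norm. The key quantitative point is that $d_{2^k}^2/c_{2^k}^2\asymp\ell_1(2^k)^{-100}$ by~\eqref{eq:dn}, so the Skorokhod error --- which scales with $d_{2^k}$ --- is much smaller than $c_{2^k}$ and summable in $k$ via (HL0)(iii).

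The main obstacle is the joint calibration in this third step: the sub-block length $m_k$, the gap length, and the exponent $99$ in~\eqref{eq:dn} have to be tuned in concert so that (H2), (H3)(ii)--(iii), and the summability in (HL0)(iii) all cooperate to produce simultaneously the correct per-step covariance structure and a summable coupling error. Once this is arranged, Borel--Cantelli together with a final summation of the dyadic block errors gives the desired $o(c_n)$ a.s.\ bound; the individual Gaussians $Z_j$ in the statement are then obtained by redistributing the Skorokhod sub-block Gaussians across $j\in\Delta_k$ (equivalently, as unit-variance increments of the Brownian motion defined on the enlarged probability space).
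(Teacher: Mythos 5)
Your architecture is essentially the paper's: dyadic macro-blocks, Bernstein sub-blocks decoupled via (H2), a Zaitsev-type strong approximation fed by the $L^4$ bound (H3)(ii) at the lower truncation $d_{2^k}$, (HL1) plus (HL0)(iii) for the annulus piece, and (H3)(iii) for the covariance identification. Your alternative fix for the $\ell^*(c_{2^k})$ vs.\ $\ell^*(d_{2^k})$ covariance mismatch (adjoining independent Gaussians of variance $\asymp 2^kLL(k)=o(c_{2^k}^2)$, legitimate because the deficit matrix is positive semidefinite) is a reasonable substitute for the coupling of \cite[Lemma 5.7]{Gou-asip} used in the paper. However, there are three concrete gaps.

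First, the variables indexed by the gaps themselves are never accounted for. The sum $\sum_{j=1}^nW_j^{c_{\hj}}$ runs over \emph{all} indices, including those sitting in the separating gaps $J_{n,j}$; since the total gap length up to time $2^n$ is $\asymp n2^{(\beta+\eps_1)n}$, their contribution is not trivially negligible and in the paper requires its own argument (Lemma~\ref{lemma:gaps}: an $L^2$ bound obtained by a further (H2)-decoupling, followed by the Gal--Koksma strong law). Second, your gaps of ``polylogarithmic length in $k$'' are too short for (H2): the decoupling error carries the prefactor $(1+\max_j|b_{j+1}-b_j|)^{C(m+n)}$, which for sub-blocks of length $2^{\alpha k}$ is at least $e^{cik}$ when $2^i$ sub-blocks are involved, so the gap separating them must have length $\gg k\,2^{i}$; this forces the dyadic gap structure \eqref{gaps} with lengths $2^{\lfloor\eps_1 k\rfloor}2^{r}$, i.e.\ stretched-exponential in $k$, and this in turn is why the main blocks have length $2^{(1-\beta)k}$ rather than a free parameter $2^{\alpha k}$ with independent gap lengths. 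Third, the a.s.\ bound is asserted at every $n$, not only at sub-block endpoints, so for the core piece you also need $\max_{r<|I_{n,j}|}\bigl|\sum_{l=i_{n,j}}^{i_{n,j}+r}W_l^{d_{2^k}}\bigr|=o(c_{2^k})$ summably over blocks; your maximal control covers only the annulus piece via (HL1)(ii), whereas the paper obtains the core-piece maximum from Serfling's inequality combined with the fourth-moment bound (H3)(ii). None of these is fatal to the strategy, but each is a step that must actually be supplied rather than a calibration to be tuned.
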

The proof of Proposition~\ref{prop:cr} is provided in Section~\ref{sec:mainprop}.
The remainder of the proof of Proposition~\ref{thm-main} comes down to recalling a few results from~\cite{Ein07}, which do not require any form of independence.

\begin{lemma}{~\cite[Proof of Theorem 1, Step 2]{Ein07}~\cite[Item (iii) of Section 3.2]{Ein09}}\label{prop:indapp}
  Let $\Gamma_n=\ell^*(c_n) \Sigma^2$.
  Let $Z_j$ the independent Gaussians defined in Proposition~\ref{prop:cr}.
 Then, almost surely, as $n\to\infty$,
\[
\sum_{j=1}^n(\Gamma_j-\sigma_j) Z_j=o(c_{n}).
\]
\end{lemma}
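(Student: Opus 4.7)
Interpret $\Gamma_n$ as the positive definite symmetric square root of $\ell^*(c_n)\Sigma^2$ (consistent with Theorem~\ref{thm-main}), and let $\sigma_j$ denote the positive symmetric square root of $\ell^*(c_{\hj})(1+o(1))\Sigma^2$ furnished by Proposition~\ref{prop:cr}. The plan is to control $S_n:=\sum_{j=1}^n(\Gamma_j-\sigma_j)Z_j$ almost surely along the dyadic subsequence $n_k=2^k$ by Borel--Cantelli, and then to fill the gaps via L\'evy's maximal inequality, following the strategy of~\cite[Section 3.2]{Ein09}.

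The preparation is a matrix-norm estimate. Since $\hj\in[j/2,j]$ and $c_j$ is regularly varying of positive index (up to slowly varying factors), slow variation of $\ell^*$ gives $\ell^*(c_{\hj})/\ell^*(c_j)\to 1$; combined with the $(1+o(1))$ factor in $\sigma_j^2$, this yields
\[
 \|\Gamma_j-\sigma_j\|=\delta_j\sqrt{\ell^*(c_j)},\qquad \delta_j\to 0.
\]
Consequently $S_n$ is a centered Gaussian vector in $\R^d$ whose covariance has trace $V_n$ bounded by $d\,\|\Sigma\|^2\sum_{j\le n}\delta_j^2\,\ell^*(c_j)$. In the Lorentz case $\ell^*(c_j)\asymp L(j)$, so by Ces\`aro averaging $V_n=o(nL(n))$; comparing with $c_n^2\asymp nL(n)\ell_1(n)$ gives $V_n/c_n^2=o(\ell_1(n)^{-1})$.

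Along the dyadic subsequence $n_k=2^k$, for each fixed $\eps>0$ a standard Gaussian tail estimate gives
\[
 \pP^*(|S_{n_k}|>\eps c_{n_k})\le C_d\exp\!\left(-c_d\eps^2 c_{n_k}^2/V_{n_k}\right),
\]
and the exponent exceeds a constant multiple of $\ell_1(2^k)/\eta_k$ for some $\eta_k\to 0$. Summability in $k$ of the right-hand side is then guaranteed by (HL0)(iii), which calibrates the growth of $\ell_1(2^k)$ precisely to beat $LL(2^k)=\log k$ after allowing for the decay of $\eta_k$. Borel--Cantelli then yields $|S_{n_k}|=o(c_{n_k})$ almost surely. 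The intermediate indices $n\in[n_k,n_{k+1}]$ are handled by L\'evy's inequality applied to the independent symmetric Gaussian increments $(\Gamma_j-\sigma_j)Z_j$,
\[
 \pP^*\!\left(\max_{n_k\le n\le n_{k+1}}|S_n-S_{n_k}|>\eps c_{n_k}\right)\le 2\,\pP^*\!\left(|S_{n_{k+1}}-S_{n_k}|>\eps c_{n_k}\right),
\]
to which the same Gaussian tail bound applies, since $c_{n_{k+1}}\asymp c_{n_k}$. A second Borel--Cantelli step, together with monotonicity of $c_n$, yields $S_n=o(c_n)$ almost surely.

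The main obstacle is the Borel--Cantelli summability in the dyadic argument, which hinges entirely on (HL0)(iii): it is this condition that forces enough growth of $\ell_1(2^k)$ along dyadic times to dominate $\log k$ in the Gaussian exponent. All other ingredients (L\'evy's inequality, Gaussian coordinate tails, slow variation of $\ell^*$, Ces\`aro averaging) are routine.
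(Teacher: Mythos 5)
The paper does not prove this lemma itself; it defers entirely to Einmahl (\cite[Proof of Theorem 1, Step 2]{Ein07}, \cite[Section 3.2(iii)]{Ein09}), and your reconstruction — the estimate $\|\Gamma_j-\sigma_j\|=o\bigl(\sqrt{\ell^*(c_j)}\bigr)$ via continuity of the matrix square root and slow variation, then dyadic Borel--Cantelli with Gaussian tail bounds plus L\'evy's maximal inequality on the blocks $[2^k,2^{k+1}]$ — is correct and is exactly the standard argument used in those references. One small imprecision worth noting: the summability in your dyadic step does not actually use the series condition of (HL0)(iii), only the lower bound $L(k)\ll \ell_1(2^k)$ recorded there, which together with $V_{n_k}/c_{n_k}^2=o\bigl(1/\ell_1(2^k)\bigr)$ already forces the Gaussian tail below $k^{-2}$ for large $k$.
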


We can now complete

\begin{pfof}{Proposition~\ref{thm-main}}
By Proposition~\ref{prop:cr}
and Lemma~\ref{prop:indapp},
\(
 \sum_{j=1}^n W_j^{c_{\hj}}-\sum_{j=1}^n\Gamma_j Z_j=o(c_{n}).
\)
Note that
\begin{align*}
 \sum_{j=1}^{\infty} \mathbb{P} (v_j\ne W_j^{c_{\hj}})
=\sum_{j=1}^{\infty} \mathbb{P} (v_j>c_{\hj}) =  \sum_{j=1}^{\infty} \frac{\ell(c_{\hj})}{c_{\hj}^2}
&\ll\sum_{n=1}^{\infty} \frac{2^n\ell(c_{2^n}/2^n)}{c_{2^n}^2}.
\end{align*}
Recall that $\ell(n)=o(\tilde\ell(n))=o(L(n))$. Thus,
\[
 \sum_{n=1}^{\infty} \frac{2^n\ell(c_{2^n}/2^n)}{c_{2^n}^2}\ll \sum_{n=1}^{\infty} \frac{LL(n)}{n\ell_1(2^n)}<\infty,
\]
where in the last inequality we have used (HL0)(iii).

By the Borel-Cantelli lemma, $\sum_{j=1}^n v_j-\sum_{j=1}^n W_j^{c_{\hj}}=o(c_{n})$.
Hence,
\begin{align}\label{firstc}
\sum_{j=1}^n v_j-\sum_{j=1}^n\Gamma_j Z_j=o(c_{n}).
\end{align}

By~\cite[Equation (3.16)]{Ein09} (see also~\cite[Proof of Theorem 1, Steps 3 and 4]{Ein07}),
\(
 \max_{1\le k\le n}\left|\sum_{j=1}^k(\Gamma_n-\Gamma_j) Z_j\right|=o(c_{n}).
\)
This together with~\eqref{firstc} implies that
\[
 \max_{1\le k\le n}\left|\sum_{j=1}^k v_j-\Gamma_n\sum_{j=1}^k Z_j \right|=o(c_{n}).
\]~\end{pfof}

\section{Proof of Proposition~\ref{prop:cr}}
\label{sec:mainprop}

Proceeding as in~\cite[Section 5]{Gou-asip}, we divide $\N$ into  intervals $[2^n, 2^{n+1})$ and  further divide each such interval into intervals of `good' length with gaps between them.

Fix $\beta\in (0,1)$ and $\eps_1\in (0, 1-\beta)$.
Decompose each interval $[2^n, 2^{(n+1)})$ into a union of
$F(n)= 2^{\lfloor \beta n\rfloor}$ intervals $I_{n,j}, 0\le j < F(n)$ of equal length
and place $F(n)$ gaps $J_{n,j}, 0\le j < F(n)$ between them.
That is, $[2^n, 2^{n+1})= J_{n,0}\cup I_{n,0}\cup J_{n,1}\cup\ldots I_{n,F(n)-1}$.

We choose the lengths of the gaps in the same way as in~\cite[Section 5]{Gou-asip}. In particular, for $1\le j\le F(n)$, write $j=\sum_{k=0}^{2^{\lfloor n\beta\rfloor}}\alpha_k(j) 2^k$, $\alpha_k(j)\in\{0,1\}$
and let $r$ be the smallest number so that $\alpha_r(j)\ne 0$. In this notation,

\begin{align}\label{gaps}
 |J_{n,j}|=2^{\lfloor \eps_1 n\rfloor }2^r
 \text{ for } j\ne 0\text{ and }
 |J_{n,0}|= 2^{\lfloor \eps_1 n\rfloor }2^{\lfloor \beta n \rfloor }.
\end{align}

As such, the lengths of the gaps add up to $ 2^{\lfloor\eps_1 n\rfloor}2^{\lfloor \beta n\rfloor-1}(\lfloor \beta n\rfloor+2)$ and one can take
\begin{align}\label{eq:Inj}
 |I_{n,j}|=2^{n-[n\beta]}-(\lfloor\beta n\rfloor+2) 2^{\lfloor\eps_1 n\rfloor-1}.
\end{align}

The first proposition below uses the gaps (via the fast decorrelation expressed in (H2)) to show the sums $\sum_{l\in I_{n,j}} W_l^{c_{2^n}}$ can be regarded independent for distinct intervals $I_{(n,j)}$
(at the price of an almost surely $o(c_n)$ error). It is analogous to~\cite[Proposition 5.1]{Gou-asip} under  assumption (H2).
 For its statement we need the lexicographical order
 $\prec$ (as used in \cite[Section 5]{Gou-asip}): we write $(n', j')\prec (n,j)$ if the interval $I_{n',j'}$ is to the left of $I_{n,j}$
 for $0\le j'  < F(n')$ and $0\le j < F(n)$.

\begin{lemma}\label{prop:i.i.d.app} Assume (H1)--(H3).
Let $X_{n, j}=\sum_{l\in I_{n,j}} W_l^{c_{2^n}}$.

There exists a coupling between $(X_{n,j})_{n\in\N,\, 0\le j<F(n)}$  and a sequence of independent random variables
$(\hat X_{n,j})_{n\in\N,\, 0\le j<F(n)}$  with $\hat X_{n,j}$ distributed as $ X_{n,j}$,
such that, almost surely as $(n,j)\to\infty$,
  \[
   \left|\sum_{(n', j')\prec (n,j)}(X_{n',j'}-\hat X_{n',j'})\right|=
   o\left(c_{2^n}\right).
  \]
\end{lemma}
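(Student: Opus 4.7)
The strategy is to follow Gou\"ezel~\cite[Proposition 5.1]{Gou-asip}, adapting the block-by-block coupling construction to our truncated variables $X_{n,j}=\sum_{l\in I_{n,j}} W_l^{c_{2^n}}$. The independent copies $\hat X_{n,j}$ would be built inductively in lexicographic order via a quantitative Berkes--Philipp/Strassen-type coupling lemma of the kind developed in \cite[Section 3]{Gou-asip}: at step $(n,j)$, conditional on all previously constructed $\hat X_{n',j'}$ with $(n',j')\prec (n,j)$, one enlarges the probability space and constructs $\hat X_{n,j}$ having the marginal distribution of $X_{n,j}$, independent of the past $\hat X$'s, with coupling error $|X_{n,j}-\hat X_{n,j}|$ controlled by a characteristic function distance between the true conditional law of $X_{n,j}$ given the past and its unconditional marginal.

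This characteristic function distance is precisely what assumption (H2) estimates. The past blocks serve as the first group of $M$ summands (each $\sum_{l\in I_{n',j'}} W_l^{c_{2^{n'}}}$ truncated at its own level $c_{2^{n'}}$), $X_{n,j}$ is the single summand in the second group truncated at level $c_{2^n}$, and they are separated by the gap $J_{n,j}$ of length $k=|J_{n,j}|$. Applied in this configuration, (H2) yields a bound of the form $(1+\max|I_{n',j'}|)^{C(M+1)}e^{-c|J_{n,j}|}$ on the supremum of the characteristic function discrepancy on $\{|t|\le\eps_0\}$. Since $|I_{n',j'}|\le 2^{n(1-\beta)}$, $M\lesssim 2^{n\beta}$ and $|J_{n,j}|\ge 2^{\lfloor \eps_1 n\rfloor}$, imposing $\eps_1>\beta$ (compatible with $\eps_1<1-\beta$ provided $\beta<1/2$) ensures the factor $e^{-c2^{n\eps_1}}$ dominates the polynomial prefactor, so the per-step coupling error decays faster than any polynomial in $n$.

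The dyadic structure of the gaps in \eqref{gaps} then controls the accumulated error: at level $n$, only $F(n)/2^{r+1}$ blocks have gap size $2^{\lfloor\eps_1 n\rfloor}2^r$, so summing geometrically in $r$ and across the $F(n)=2^{\lfloor\beta n\rfloor}$ blocks, the total coupling error at level $n$ remains superpolynomially small. Summing over $n$ and applying Borel--Cantelli would then deliver the required a.s.~bound $o(c_{2^n})$, which is vastly weaker than what the construction actually provides. \textbf{Main obstacle.} The main subtlety is invoking a quantitative coupling in a dimension that grows with the number of past blocks; however, the exponentially large gap $|J_{n,j}|$ and matching exponential decorrelation in (H2) give ample slack once $\eps_1>\beta$ is secured. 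A crucial feature of (H2) that is used here is the flexibility to allow distinct truncation levels $R_j=c_{2^{n'}}$ across the past blocks, which is essential since different levels $n'$ come with different truncations $c_{2^{n'}}$; this is exactly why we stated (H2) with arbitrary per-block truncation levels.
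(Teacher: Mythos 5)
Your overall strategy — (H2) plus a quantitative Strassen-type coupling plus Borel--Cantelli — is the same as the paper's, but your single-pass, block-by-block organization (using the gap $J_{n,j}$ immediately preceding each $I_{n,j}$ to decouple it from its \emph{entire} past in one step) reorganizes the argument. The paper, following \cite{Gou-asip}, proceeds in two stages: it first decouples level $n$ from all previous levels using only the large gap $J_{n,0}$ of length $2^{\lfloor\eps_1 n\rfloor}2^{\lfloor\beta n\rfloor}$, and only afterwards decouples the $F(n)$ blocks within level $n$ from each other by the recursive dyadic halving for which the gap lengths \eqref{gaps} were designed (at dyadic scale $i$ one separates $2^i$ blocks by a gap of length $\approx 2^{n\eps_1+i}$). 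In that scheme the exponential gain always dominates the polynomial prefactor of (H2) for \emph{every} $\beta\in(0,1)$ and $\eps_1\in(0,1-\beta)$, whereas in your scheme the worst gaps have length only $2^{\lfloor\eps_1 n\rfloor}$ while the past already contains $\approx 2^{\beta n}$ blocks, forcing $\eps_1>\beta$ (and in fact more, see below). This restriction is survivable since $\beta$ may be taken small downstream, but it is a real cost, not a simplification.

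There are two genuine gaps. First, (H2) controls characteristic functions only for frequencies $|t_j|\le\eps_0$, so it cannot be fed directly into a Prokhorov/Strassen--Dudley bound, whose $L^2$ integral runs over all of $\R^{dM}$: one must first convolve every block with an auxiliary symmetric variable $V$ whose characteristic function is supported in $\{|t|\le\eps_0\}$ (\cite[Proposition 3.8]{Gou-asip}), carry out the entire coupling for the smoothed variables $X_{n,j}^*=X_{n,j}+V_{n,j}$, and then remove the $V_{n,j}$ at the end by a separate argument (the paper uses the LIL for the i.i.d.\ $L^2$ variables $V_{n,j}$, yielding a contribution $o(2^{(\beta+\eps)n/2})=o(c_{2^n})$). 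This device is absent from your plan and is not optional. Second, your error control via ``the true conditional law of $X_{n,j}$ given the past'' is not what (H2) delivers: (H2) bounds differences of \emph{unconditional joint} characteristic functions at low frequencies, which does not control $\E\left|\E\left(e^{i\langle t,X_{n,j}\rangle}\mid \text{past}\right)-\phi(t)\right|$ (that would require testing the decorrelation against arbitrary bounded functionals of the past, not just low-frequency exponentials). One must therefore use the unconditional Strassen--Dudley comparison of the joint law of the pair (past, $X_{n,j}$) with that of (past, independent copy), and iterate these couplings consistently, as in \cite[Corollary 5.3]{Gou-asip}. Once that route is taken, the Strassen--Dudley bound carries the dimensional factor $(C_dT^*)^{dM/2}$ and tail terms with $M\approx 2^{\beta n}$ coordinates; beating these with only $e^{-c2^{\eps_1 n}}$ requires a careful choice of $T^*$ and a stronger relation between $\beta$ and $\eps_1$ than the one you state, so the claim of ``ample slack once $\eps_1>\beta$'' is not justified as written.
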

The proof of this lemma is provided in Subsection~\ref{sec:prooft}.

The next lemma is an analogue of~\cite[Section 5.2]{Gou-asip} (coupling with Gaussians) under the current assumptions.

\begin{lemma}\label{lemma:Gaus} Assume (H1)--(H3), (HL0) and (HL1).
Let $(\hat X_{n,j})_{n\in\N,\, 0\le j<F(n)}$  be the independent random variables obtained in Lemma~\ref{prop:i.i.d.app}.

There exits a coupling between $(\hat X_{n,j})_{n\in\N,\, 0\le j<F(n)}$ and a sequence of independent Gaussian random variables $(\tZ_{n,j})_{n\in\N,\, 0\le j<F(n)}$ with covariance matrices
\[
cov (\tZ_{n,j})=cov (\hat X_{n,j})= \ell^*(c_{2^n}) \cdot |I_{n,j}| \cdot \Sigma^2 (1+o(1))
\] so that
almost surely as $(n,j)\to\infty$,
  \[
   \left|\sum_{(n', j')\prec (n,j)}(\hat X_{n',j'}-\tZ_{n',j'})\right|=
   o(c_{2^n}).
  \]
  \end{lemma}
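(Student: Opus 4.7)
The plan is to follow~\cite[Section 5.2]{Gou-asip}: to each independent $\hat X_{n,j}$ I would apply a classical Gaussian coupling lemma (of Berkes--Philipp / Zaitsev type, as used in~\cite{Gou-asip} and~\cite{Ein07}) that, given a centered random vector, constructs on an enlarged probability space a Gaussian with matching covariance together with a controlled $L^q$ bound on the difference. Applying this coupling independently over $(n,j)$ produces the desired sequence $\tilde Z_{n,j}$, and the task reduces to bounding the partial sums of the independent centered differences $\hat X_{n,j}-\tilde Z_{n,j}$ by $o(c_{2^n})$ almost surely, via Rosenthal/Kolmogorov inequalities and Borel--Cantelli along the dyadic blocks, using the summability condition (HL0)(iii).

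A direct application of (H3)(ii)--(iii) is not quite sharp enough on its own, because the truncated summands $W_l^{c_{2^n}}$ still admit values at the full scale $c_{2^n}$. This is where (HL1) intervenes. I would split
\[
W_l^{c_{2^n}}=v_l\,\mathbf 1_{|v_l|\le d_{2^n}}+\tW_l,\qquad \tW_l=v_l\,\mathbf 1_{d_{2^n}<|v_l|\le c_{2^n}},
\]
and absorb the contribution of $\sum_{l\in I_{n,j}}\tW_l$ separately: (HL1)(i) bounds its second moment by $\ll |I_{n,j}|\cdot LL(n)$, while (HL1)(ii) bounds the probability that its running partial sums within a block ever exceed $\eps c_{2^n}$ by $\ll |I_{n,j}|\cdot LL(n)/c_{2^n}^2$. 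Multiplied by $F(n)=2^{\lfloor\beta n\rfloor}$ and summed over $n$, these are finite by (HL0)(iii), so Borel--Cantelli ensures that the medium part contributes $o(c_{2^n})$ almost surely.

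The remaining ``small'' sum $\hat X_{n,j}^{\mathrm{sm}}=\sum_{l\in I_{n,j}}v_l\,\mathbf 1_{|v_l|\le d_{2^n}}$ has summands bounded by $d_{2^n}$ and fourth moment $\ll |I_{n,j}|\,c_{2^n}^{2}$ by (H3)(ii). I would apply the blockwise Gaussian coupling to $\hat X_{n,j}^{\mathrm{sm}}$ to obtain independent $\tilde Z_{n,j}\sim\cN(0,\mathrm{cov}(\hat X_{n,j}^{\mathrm{sm}}))$, and then pass to the target covariance $\mathrm{cov}(\hat X_{n,j})=|I_{n,j}|\,\ell^*(c_{2^n})\,\Sigma^2\,(1+o(1))$ by a deterministic linear adjustment whose cost is absorbed by (HL1)(i) together with (H3)(iii). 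A Rosenthal inequality applied to the independent centered residuals $\hat X_{n,j}^{\mathrm{sm}}-\tilde Z_{n,j}$ then controls the maximum of their partial sums, and a Borel--Cantelli step along the dyadic blocks gives $o(c_{2^n})$ almost surely.

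The main obstacle will be the precise calibration of the two truncation levels: every error term (block length $|I_{n,j}|$, truncation $d_{2^n}$, Gaussian coupling residual, and the $LL(n)$ overhead contributed by (HL1)) must balance under the non-standard scaling $c_{2^n}\asymp\sqrt{2^n\,\ell^*(2^n)\,\ell_1(2^n)}$ rather than the usual $\sqrt{2^n}$. The choice $d_{2^n}=\sqrt{n\,L(n)\,\ell_1(n)^{-99}}$ from~\eqref{eq:dn} -- much smaller than $c_{2^n}$ but still comparable to the typical scale of the variance -- is engineered precisely so that the Gaussian coupling of the part bounded by $d_{2^n}$ is effective while (HL1) still controls the medium-range fluctuations, with Borel--Cantelli summability guaranteed by (HL0)(iii).
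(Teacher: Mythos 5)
Your proposal follows essentially the same route as the paper: split each block sum at the second truncation level $d_{2^n}$, discard the ``medium'' part $\sum_{l\in I_{n,j}}\tW_l$ by Chebyshev using (HL1)(i) together with the summability (HL0)(iii) (this is the paper's Lemma~\ref{lemma:d_n_c_n}), apply a Zaitsev-type blockwise Gaussian coupling to the independent low-truncated block sums (the paper's Lemma~\ref{lemma:gausshatynj}, via \cite[Corollary 3]{zait} as recalled in \cite[Proposition 5.5]{Gou-asip}, with $p=4$), and finally adjust the covariance to $\ell^*(c_{2^n})\,|I_{n,j}|\,\Sigma^2(1+o(1))$ via \cite[Lemma 5.7]{Gou-asip}. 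A minor remark: (HL1)(ii) is not needed for this lemma --- the paper invokes it only for the within-block maximal estimate of Lemma~\ref{lemma:max}; here (HL1)(i) suffices.

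One quantitative point needs correcting, and it sits exactly at the crux of the calibration. You state that the fourth moment of $\hat X^{\mathrm{sm}}_{n,j}$ is $\ll |I_{n,j}|\,c_{2^n}^2$; applying (H3)(ii) with $R=d_{2^n}$ actually gives the stronger bound $\ll |I_{n,j}|\,d_{2^n}^2$, and the difference is not cosmetic. The Zaitsev coupling requires $Mz=o(c_{2^n})$ with $M=\bigl(\sum_{j}\E|\hat Y_{n,j}|^4\bigr)^{1/4}$ and $z\gg\log s$ chosen so that $\sum_n z_n^{-4}<\infty$, forcing $z\gg n^{1/4}$ up to slowly varying factors. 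With $M^4\ll 2^n d_{2^n}^2$ one gets $M\ll 2^{n/2}n^{1/4}\ell_1(2^n)^{-99/4+o(1)}$ and hence $Mz\ll c_{2^n}/L(n)$, as in the paper's \eqref{eq:Mwith4}. With only $M^4\ll 2^n c_{2^n}^2$ one gets $Mz/c_{2^n}\asymp z\,(n\,\ell_1(2^n))^{-1/4}$, which for admissible $z$ does not tend to $0$ unless $\ell_1(2^n)$ grows much faster than (HL0)(i) guarantees --- in particular it fails for the sequence \eqref{eq:cnlg} of the main application. In other words, the factor $\ell_1(n)^{-99}$ in the definition \eqref{eq:dn} of $d_n$ must actually propagate into the fourth-moment bound; this is precisely the reason the second truncation level is introduced, so make sure the bound you carry into the coupling step is $|I_{n,j}|\,d_{2^n}^2$ and not $|I_{n,j}|\,c_{2^n}^2$.
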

The proof of this lemma is provided in Subsection~\ref{sec:Gaus}.

As a consequence of the two previous lemmas, we obtain
\begin{cor}\label{cor:f} Let $X_{n,j}$ and $\tZ_{n,j}$ as defined in Lemmas~\ref{prop:i.i.d.app}and in~\ref{lemma:Gaus}, respectively.
 Then
 \[
   \left|\sum_{(n', j')\prec (n,j)}(X_{n',j'}-\tZ_{n',j'})\right|=
   o\left(c_{2^n}\right).
  \]
\end{cor}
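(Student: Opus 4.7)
The plan is to deduce Corollary~\ref{cor:f} from the two preceding lemmas by a single triangle inequality, once all three sequences $(X_{n,j})$, $(\hat X_{n,j})$ and $(\tZ_{n,j})$ have been realized on a common probability space.

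First I would take care of the only technical point, namely the gluing of the two couplings. Lemma~\ref{prop:i.i.d.app} produces a joint distribution of $((X_{n,j}),(\hat X_{n,j}))$, and Lemma~\ref{lemma:Gaus} produces a joint distribution of $((\hat X_{n,j}),(\tZ_{n,j}))$. Both couplings share the same marginal law on the independent sequence $(\hat X_{n,j})$, so a standard application of the gluing lemma (obtained by disintegrating each coupling along this common marginal and then integrating against a single copy of the $(\hat X_{n,j})$-law) yields a probability space carrying all three sequences simultaneously, and whose pairwise marginals agree with those given by the two lemmas.

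On this common probability space, write
\[
 X_{n',j'}-\tZ_{n',j'}=(X_{n',j'}-\hat X_{n',j'})+(\hat X_{n',j'}-\tZ_{n',j'}),
\]
and apply the triangle inequality to obtain
\[
\left|\sum_{(n', j')\prec (n,j)}(X_{n',j'}-\tZ_{n',j'})\right|
\le \left|\sum_{(n', j')\prec (n,j)}(X_{n',j'}-\hat X_{n',j'})\right|
+\left|\sum_{(n', j')\prec (n,j)}(\hat X_{n',j'}-\tZ_{n',j'})\right|.
\]
Lemma~\ref{prop:i.i.d.app} bounds the first summand by $o(c_{2^n})$ almost surely, and Lemma~\ref{lemma:Gaus} bounds the second summand by $o(c_{2^n})$ almost surely. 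Adding these two $o(c_{2^n})$ bounds yields the claim.

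Since the proof is essentially a one-line triangle inequality, I do not anticipate any substantive obstacle; the only nontrivial bookkeeping is the gluing of the two couplings along their common marginal, which is entirely routine.
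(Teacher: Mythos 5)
Your proof is correct and matches the paper's (implicit) argument: the corollary is stated there as an immediate consequence of the two lemmas via exactly this triangle inequality. Your explicit remark about gluing the two couplings along the common marginal of $(\hat X_{n,j})$ is a reasonable piece of bookkeeping that the paper leaves unstated.
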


We turn to maxima of sums and gaps.

\begin{lemma}\label{lemma:max} Assume (H3), (HL0) and (HL1).
Let $i_{n,j}$ be the smallest element of $I_{n,j}$.
Then the following holds almost surely as $(n,j)\to\infty$
 \[
  \max_{r<|I_{n,j}|}\left|\sum_{l=i_{n,j}}^{i_{n,j}+r}W_l^{c_{2^n}}\right|=o\left(c_{2^n}\right)
   \]and a similar statement holds for $\tZ_{n',j'}$.\end{lemma}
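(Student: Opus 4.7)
The plan is to decompose $W_l^{c_{2^n}} = W_l^{d_{2^n}} + \tW_l$ along the two truncation scales built into (HL1), treat each range using the assumption designed for it, and realise the Gaussian version via a Brownian motion.

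For the middle range $\tW_l = v_l\mathbf{1}_{d_{2^n}<|v_l|\le c_{2^n}}$, I will apply (HL1) with $r = i_{n,j}$ and $K = |I_{n,j}|$. The sums $\cS_k$ of (HL1) then coincide with the partial sums of $\tW$ over $I_{n,j}$, and using $\{\max_{k\le K}|\cS_k|>\eps c_{2^n}\} = \sqcup_k N_{k,\eps}$ together with Chebyshev via (HL1)(i) applied to $\cS_K$ and the bound (HL1)(ii),
\[
\mu\Bigl(\max_{0\le k\le K}|\cS_k|>\eps c_{2^n}\Bigr) \le \mu(|\cS_K|>\eps c_{2^n}/2)+\sum_{k=1}^K\mu(N_{k,\eps}\setminus B_{K,\eps/2}) \le \frac{C_\eps\,|I_{n,j}|\,LL(n)}{c_{2^n}^2}.
\]
Summing over $0\le j<F(n)$ with $\sum_j|I_{n,j}|\le 2^n$ yields a per-$n$ bound $\ll 2^n LL(n)/c_{2^n}^2$, summable in $n$ by (HL0)(iii); Borel-Cantelli closes this case.

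For the small range $W_l^{d_{2^n}}$, (H3)(ii) at $R = d_{2^n}$ and $m = |I_{n,j}|\asymp 2^{(1-\beta)n}$ (its hypotheses being satisfied since $d_{2^n}^2\asymp 2^n$ up to slowly varying factors, for $r_1,r_2$ small depending on $\beta$) gives $\E|\sum_{l\in I_{n,j}}W_l^{d_{2^n}}|^4 \ll |I_{n,j}|\,d_{2^n}^2$. Using the identity $d_{2^n}^2/c_{2^n}^4 = (2^n n\,\ell_1(2^n)^{101})^{-1}$, Markov's single-tail bound already sums over $(n,j)$ to $\sum_n 1/(n\,\ell_1(2^n)^{101}) <\infty$ since $\ell_1(2^n)\ge L(n)$. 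To promote this to a maximal bound I subdivide $I_{n,j}$ into sub-blocks of length $g\asymp n$. On the coarse sub-block scale, an Etemadi-type argument with independence replaced by the factorisation in (H2) — the decorrelation error $e^{-cg}$ being summable over the $\ll 2^n$ sub-block indices once $g$ is a sufficiently large multiple of $n$ — reduces the max of sub-block partial sums to the single tail of the full sum. On the fine within-sub-block scale, (H3)(ii) with $m = g$ combined with a Moricz-Serfling inequality produces a $(\log g)^4\sim (\log n)^4$ overhead, whose contribution across all $(n,j)$ and sub-blocks is $\ll (\log n)^4/(n\,\ell_1(2^n)^{101})$ per $n$, again summable since $\ell_1(2^n)\ge L(n)$.

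For the Gaussian version, realise the $\tZ_{n,j}$ as block-increments of a single Brownian motion $B$ with variance $\ell^*(c_{2^n})\Sigma^2$ per unit time on $I_{n,j}$; the block-internal max $\max_{r\le|I_{n,j}|}|B(i_{n,j}+r)-B(i_{n,j})|$ has the same Gaussian tail (up to a constant) as $|B(|I_{n,j}|)|$ by the reflection principle, and the condition $|I_{n,j}|\,\ell^*(c_{2^n}) = o(c_{2^n}^2)$ yields super-polynomial summability in $(n,j)$. The main obstacle is the small-range step: the single-tail summability hinges on the ratio $d_{2^n}^2/c_{2^n}^4 = 1/(2^n n\,\ell_1(2^n)^{101})$ created by the exponent $-99$ in~\eqref{eq:dn}, and upgrading it to a maximal bound requires the Etemadi reduction via (H2) to be carried out delicately, with gap length $g\asymp n$ balancing the $e^{-cg}$ decorrelation error (which must beat $2^n$) against the $(\log g)^4$ cost of the within-sub-block Moricz step (which must fit inside the $\ell_1(2^n)^{101}$ margin).
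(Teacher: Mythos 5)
Your treatment of the middle truncation range and of the Gaussian part is correct and coincides with the paper's: the paper's Lemma~\ref{lemma:maxd_n_c_n} runs exactly your argument for $\tW_l$, namely the decomposition of $\{\max_k|\cS_k|\ge\eps c_{2^n}\}$ into the sets $N_{k,\eps}$, Chebyshev via (HL1)(i) for $\mu(B_{K,\eps/2})$, the bound (HL1)(ii) for the remainder, and summation over $(n,j)$ using (HL0)(iii). Your single-tail estimate for the low range $W_l^{d_{2^n}}$ via (H3)(ii) and the ratio $d_{2^n}^2/c_{2^n}^4$ is also the paper's computation.

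The gap is in your upgrade of the low-range single-tail bound to a maximal bound. Your Etemadi step ``with independence replaced by the factorisation in (H2)'' does not go through as described, for three reasons. First, (H2) is a statement about characteristic functions of block sums, with an error term carrying the prefactor $(1+\max_j|b_{j+1}-b_j|)^{C(m+n)}$, which is exponential in the \emph{number of blocks}; the Etemadi first-passage event $A_k=\{$first sub-block index with $|S|\ge 3\alpha$ is $k\}$ depends on all $k$ individual sub-block sums, so resolving it forces $m+n\asymp k$, which can be as large as $2^{(1-\beta)n}/g$, and then no gap of size $g\asymp n$ (nor any gap fitting inside $I_{n,j}$) can beat the prefactor. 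Second, your sub-blocks are contiguous, whereas (H2) requires a gap of length $k$ between the two groups with error $e^{-ck}$; with $k=0$ the bound is vacuous, so you would have to insert a fresh hierarchy of gaps inside each $I_{n,j}$ and control their contribution, essentially redoing the construction of Section~\ref{sec:mainprop} at a finer scale. Third, even granting decorrelation of characteristic functions, the event-level factorisation $\mu(A_k\cap B)\approx\mu(A_k)\mu(B)$ needed for Etemadi requires the Strassen--Dudley smoothing machinery, not a direct substitution. The paper avoids all of this: it applies Serfling's purely moment-based maximal inequality \cite[Theorem B]{serf} to pass from $\E_\mu|\sum_{l\in I_{n,j}}W_l^{d_{2^n}}|^4\ll|I_{n,j}|d_{2^n}^2$ to the same bound for $\E_\mu\max_{r<|I_{n,j}|}|\sum_{l\le i_{n,j}+r}W_l^{d_{2^n}}|^4$, with no independence or decorrelation input and no logarithmic overhead, and then concludes by Markov and (HL0)(iii) exactly as in your single-tail computation. (Your underlying worry is legitimate: a naive Rademacher--Menshov/M\'oricz bound would cost $(\log_2|I_{n,j}|)^4\asymp n^4$, not $(\log n)^4$, which is fatal here; but the remedy is the moment maximal inequality, not an Etemadi reduction through (H2).)
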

The proof of Lemma~\ref{lemma:max} is given in Subsection~\ref{sec:maxpr}.

The next lemma shows that the gaps $J_{n,j}$ (see~\eqref{gaps}) can be neglected.

\begin{lemma}\label{lemma:gaps} Let $c_n$ as in (HL0) and assume (H2) and (H3).
Let $\mathcal J=\cup_{n,j} J_{n,j}$. Then, for any $\eps>0$, almost surely, as $k\to\infty$,
\[
 \left|\sum_{l<k, l\in\mathcal J} W_l^{c_k}\right|=o\left(k^{\beta/2+2\eps+\eps_1}\right).
\]
A similar statement holds for $\tZ_{n',j'}$.
 \end{lemma}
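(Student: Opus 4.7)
The plan is a Chebyshev-plus-Borel--Cantelli argument at the dyadic scale. Restricting first to $k = 2^{n+1}$, I would bound the variance of
$\sum_{l < 2^{n+1},\, l \in \mathcal J} W_l^{c_{2^{n+1}}}$
by decomposing over gaps $J_{m,j}$ with $m \leq n$ lying below $2^{n+1}$: each gap is a consecutive block with a single truncation level, so (H3)(iii) (or a trivial bound $\E((W^R)^2) \asymp L(R)$ for short gaps) gives $\mathrm{Var}\bigl(\sum_{l \in J_{m,j}} W_l^{c_{2^{n+1}}}\bigr) \asymp |J_{m,j}| \cdot \ell^*(c_{2^{n+1}}) \asymp |J_{m,j}| \cdot n$. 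The elementary total-length estimate
\[
\sum_{m \le n} 2^{\lfloor \eps_1 m\rfloor}\, 2^{\lfloor \beta m\rfloor - 1}(\lfloor \beta m\rfloor + 2) \ll n \cdot 2^{(\eps_1 + \beta)n}
\]
then produces a diagonal contribution of order $n^2 \cdot 2^{(\eps_1 + \beta) n}$ to the total variance, and Chebyshev yields
\[
\mathbb P\Bigl( \Bigl|\sum_{l < 2^{n+1},\, l \in \mathcal J} W_l^{c_{2^{n+1}}}\Bigr| > \eta\cdot 2^{n(\beta/2 + 2\eps + \eps_1)}\Bigr) \ll \frac{n^2}{2^{n(\eps_1 + 4\eps)}},
\]
which is summable, giving the desired rate along dyadic times after Borel--Cantelli.

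Cross-covariances between distinct gaps are handled by (H2) applied to single-index blocks. Taking $t_1 = t_2 = t$ with $|t| \le \eps_0$ and Taylor-expanding both sides of (H2) (using $\E(W^R) = 0$ from (H3)(i)), the second-order terms recover $t^2 \mathrm{Cov}(W_l^R, W_{l'}^R)$ while the remainder is $O(|t|^3 R^3)$; optimizing $t$ against the exponential decay $e^{-c|l-l'|}$ in (H2) yields $|\mathrm{Cov}(W_l^R, W_{l'}^R)| \ll R^2\, e^{-c'|l-l'|}$ for some $c' > 0$. Since any two distinct gaps are separated by at least a good interval of length $\gg 2^{n(1-\beta)}$ and $R = c_{2^{n+1}}$ is at most polynomial in $2^n$, the total cross-covariance contribution is super-exponentially small in $n$ and thus negligible against the diagonal estimate above.

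To pass from $k = 2^{n+1}$ to the supremum over $k \in [2^n, 2^{n+1})$, I note that $\sum_{l < k,\, l \in \mathcal J} W_l^{c_k}$ is piecewise constant in $k$ away from the gaps and jumps only when $k$ traverses one. The supremum over the $\ll 2^{\beta n}$ gap endpoints is controlled by the same Chebyshev plus union bound argument, with the polynomial factor $2^{\beta n}$ absorbed by the surplus $2^{-n(\eps_1+4\eps)}$ in the exponent; within a single gap the fluctuations are handled by a maximal inequality of the type used in Lemma~\ref{lemma:max}, with a variance estimate again dominated by $|J_{m,j}| \cdot n$. The change of truncation level from $c_k$ to $c_{2^{n+1}}$ within each dyadic block introduces only an $O(1)$ multiplicative error in the variance estimates. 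For the Gaussian analogue associated with $\tZ_{n',j'}$, the corresponding gap contributions are independent centered Gaussians with variance matched to the truncated process, so the sum-of-variances calculation and Gaussian tail bounds give the analogous a.s. rate directly. \emph{The main obstacle} is the covariance bound extracted from (H2): since (H2) is valid only for $|t_j|\le\eps_0$, differentiation at $t=0$ is not available and one must balance the cubic $R$-dependent remainder against the exponential decay in the gap separation; fortunately the separation scale $2^{n(1-\beta)}$ is large enough relative to $R$ that any polynomial prefactor is absorbed.
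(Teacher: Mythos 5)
Your overall strategy (dyadic Chebyshev plus Borel--Cantelli, with the variance split into diagonal and cross-gap contributions) is genuinely different from the paper's, which instead feeds a second-moment bound valid for \emph{every} window $J$ (Lemma~\ref{lemma:ggaps}: $\E_\mu(\sum_{l\in J\cap\mathcal J}W_l^{c_{\hl}})^2\le C|J\cap\mathcal J|^{1+\eps}$) into the Gal--Koksma strong law, and proves that second-moment bound not via covariance decay but via a rank decomposition of the gaps and a Strassen--Dudley coupling with independent copies, using (H2) directly at the level of characteristic functions. As written, your argument has two genuine gaps.

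First, the separation claim underlying your cross-covariance estimate is false. The set $\mathcal J\cap[0,2^{n+1})$ contains gaps from \emph{every} level $m\le n$, and two gaps at level $m$ are separated only by a good interval of length $\approx 2^{m(1-\beta)}$, which is $O(1)$ for bounded $m$ --- not $\gg 2^{n(1-\beta)}$. Whatever covariance bound you extract from (H2) carries a prefactor that is a positive power of $R=c_{2^{n+1}}\approx 2^{n/2}$ (your stated $R^2$, or $R^{2/3}$ after optimizing the finite-difference step against the cubic remainder), and the exponential $e^{-c'|l-l'|}$ does not beat that prefactor until $|l-l'|\gtrsim n$. Hence the cross terms coming from low-level gap pairs contribute a positive power of $2^n$ to the variance, which swamps the diagonal term $n^2 2^{(\beta+\eps_1)n}$ whenever $\beta+\eps_1<1$, and the Chebyshev bound collapses. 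This is repairable --- use Cauchy--Schwarz ($|\mathrm{Cov}|\ll\ell^*(R)\asymp n$) for the polylogarithmically many pairs at separation $\lesssim n$, and the exponential bound only beyond that --- but the step as you state it fails, and this is precisely the difficulty that leads the paper (following Gou\"ezel) to avoid pairwise covariances altogether.

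Second, your passage from dyadic times to all $k$ does not close. A union bound over the $\asymp 2^{\beta n}$ gap endpoints in $[2^n,2^{n+1})$, each controlled by Chebyshev with probability $\ll n^2\,2^{-n(4\eps+\eps_1)}$, yields $\ll n^2\,2^{n(\beta-4\eps-\eps_1)}$, which is not summable in the relevant regime $\beta>4\eps+\eps_1$ (and $\eps,\eps_1$ must be allowed to be arbitrarily small for the application in Proposition~\ref{prop:cr}). The factor $2^{\beta n}$ is not absorbed by the surplus $2^{-n(4\eps+\eps_1)}$. To control the maximum you need an honest maximal inequality (Serfling/M\'oricz type, or the chaining built into Gal--Koksma), and either route requires the second-moment bound for sums over arbitrary sub-windows of $\mathcal J$ --- which is exactly the content of Lemma~\ref{lemma:ggaps} and brings you back to the paper's scheme.
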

 Lemma~\ref{lemma:gaps} is proved in Subsection~\ref{sec:gaps}
by controlling the $L^2$ norms of the truncated variables in the same manner as in~\cite[Proof of Lemma 5.9]{Gou-asip}.

We can complete

\begin{pfof}{Proposition~\ref{prop:cr}}
First note that, as for any $(n,j)$ the Gaussian variable $\tZ_{n,j}$ has covariance $|I_{n,j}|\ell^*(c_{2^n}) \Sigma^2 (1+o(1))$\footnote{Note that for $l\in I_{n,j}$ we have $\hl=2^n$.}, it can be regarded as a sum of independent
Gaussians, $\tZ_{n,j}=\sum_{l\in I_{n,j}} \tZ_l$, where $\tZ_l$ is distributed as $\cN(0,\sigma_l^2)$, with $\sigma_l^2=\ell^*(c_{\hl}) \Sigma^2 (1+o(1))$ as in the statement of the Proposition.

Now let $\mathcal I=\cup_{n,j} I_{n,j}$ be the union of the main blocks. By Corollary~\ref{cor:f} and Lemma~\ref{lemma:max} , $\left|\sum\limits_{l\in\mathcal I, l<k}
\left(W_l^{c_{\hl}}-\tZ_{l}\right)\right|=o(c_{\hk})=o(c_k)$, almost surely, as $k\to\infty$. By Lemma~\ref{lemma:gaps}, $\left|\sum\limits_{l\in\mathcal J, l<k}
W_l^{c_{\hl}}\right|=o\left(k^{\beta/2+2\eps+\eps_1}\right)=o(c_{k})$; in the last equalities
we have used that $\beta<1$ and that $\eps,\eps_1$ can be arbitrarily small
and also that $k\in\mathcal J$.
Thus, $\left|\sum_{j=1}^k W_j^{c_{\hj}}-\tZ_{j}\right|=
 o(c_{k})$, almost surely, as $k\to\infty$.

By definition, $\tZ_{j}=\sigma_j Z_j$, where as in the statement of Proposition~\ref{prop:cr}, $\sigma_j^2= \ell^*(c_{\hj}) \Sigma^2 (1+o(1))$, and $Z_j$ is a sequence of Gaussian random
vectors distributed as $\cN(0,I)$.
Thus, almost surely, as $k\to\infty$,
\begin{align*}\label{maintr}
 \left|\sum_{j=1}^k (W_j^{c_{\hj}}-\tZ_{j})\right|=\left|\sum_{j=1}^k (W_j^{c_{\hj}}-\sigma_j\,Z_{j})\right|=
 o(c_{k}).
\end{align*}
\end{pfof}

\section{Proofs of the lemmas used in the proof of
Proposition~\ref{thm-main}}
\label{sec:mainlem}

\subsection{Proof of Lemma~\ref{prop:i.i.d.app}}
\label{sec:prooft}

The proof of this lemma is analogous to~\cite[Proof of Proposition 5.1]{Gou-asip}.
We recall the main elements partly for completeness, partly for use in some proofs below.

The proof exploits assumption (H2) and the Strassen Dudley theorem (see, for instance,~\cite[Theorem 6.9]{Billingsley}), which we now recall.
The Prokhorov metric $\pi(Y,Z)$ between two random variables $Y$ and $Z$ taking values in some metric space $M$ is defined as
 \[
 \inf \{\eps>0 \,|\, \pP(Y\in A) < \pP(Z\in A^{\eps}) + \eps \text{ and } \pP(Z\in A) < \pP(Y\in A^{\eps}) + \eps, \forall  A\in \mathcal{B}(M) \},
 \]
where $\mathcal{B}(M)$ denotes the collection of Borel sets of $M$, and $A^{\eps}$ denotes the $\eps$-neighborhood of $A$.
Specifically, for real-valued random variables $Y$ and $Z$  with  distribution functions $F_{Y}$ and $F_{Z}$ this reduces to
 \[
 \inf \{\eps>0 \,|\, F_Y (x-\eps)-\eps \le F_Z (x) \le  F_Y (x+\eps)+\eps, \forall x\in\R \}.
 \]
 In particular, if $|F_Y (x)-F_Z (x)|\le \eps$ for every $x\in\R$, then $\pi(Y, Z)\le \eps$. In the statement below,
 $\R^{dM}$ is considered with the norm $|(x_1,\ldots, x_M)|=\sup_{1\le i\le M}|x_i|$, where $|x|$ stands for the Euclidian norm of $x\in\R^d$.

 \begin{thm}{~\cite[Theorem 3.4, Lemma 3.5]{Gou-asip}}
 \label{thm-StrD}
 Let $Y, Z$ be two random variables taking values in $\R^{d M}$ with probability distributions
 $F_Y, F_Z$ and characteristic functions $\Psi_Y,\Psi_Z$. Let $\pi(Y,Z)$ denote the Prokhorov distance
 between these distributions.

(a) If $\pi(Y, Z)< c$ then there exists a coupling between $Y$ and $Z$ such that $\mathbb{P}(|Y-Z|>c)<c$.

(b) There exists $C_d>0$  so that for any $T^*>0$,
 \begin{align*}
    \pi(Y,Z)\le
    \sum_{j=1}^M P_Y(|y_j|>T^*)
  +(C_d T^*)^{(dM)/2}\left(\int_{\R^{d M}}|\Psi_Y(t)-\Psi_Z(t)|^2\, dt\right)^{1/2}.
  \end{align*}

\end{thm}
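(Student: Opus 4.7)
My plan is to handle the two parts separately: (a) is a direct appeal to the classical Strassen representation theorem, while (b) is an Esseen-type smoothing inequality combined with Plancherel's identity.

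For part (a), note that $\R^{dM}$ is a Polish space, so we are in the setting of the Strassen--Dudley theorem. If $\pi(P_Y,P_Z) < c$, this theorem produces a probability space supporting random variables $\tilde Y \sim P_Y$ and $\tilde Z \sim P_Z$ such that $\mathbb{P}(|\tilde Y - \tilde Z| > c) < c$. I would simply invoke this classical result rather than reproducing the proof, which goes through Hall's marriage theorem at the level of finite approximations followed by a weak limit argument.

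For part (b), I would first recall that by the definition of the Prokhorov distance it suffices to bound $\sup_A (P_Y(A) - P_Z(A^\eps))$ and the symmetric quantity uniformly in Borel sets $A \subset \R^{dM}$. The standard strategy is to mollify both distributions by convolving with a kernel whose Fourier transform is supported in the box $[-T^*, T^*]^{dM}$. The Prokhorov distance then splits into three contributions: (i) tail mass on $\{|y_j|>T^*\}$, yielding the term $\sum_j P_Y(|y_j|>T^*)$ together with the difference $\sum_j |P_Y(|y_j|>T^*)-P_Z(|z_j|>T^*)|$; (ii) the replacement error between the unsmoothed and smoothed distributions, which is absorbed into the $\eps$-fattening by choice of the mollifier; and (iii) the supremum distance between the two smoothed densities. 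The last piece is where the characteristic function enters: the smoothed densities have Fourier transforms $\Psi_Y\hat K$ and $\Psi_Z\hat K$ supported in $[-T^*, T^*]^{dM}$, so Fourier inversion combined with Cauchy--Schwarz on that box gives a pointwise bound of order $(T^*)^{(dM)/2}\bigl(\int|\Psi_Y-\Psi_Z|^2\bigr)^{1/2}$, producing the $(C_dT^*)^{(dM)/2}$ factor in the claimed inequality.

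The main obstacle is bookkeeping rather than analysis: one must arrange the mollifier so that its spatial support is small enough to be absorbed into the $\eps$-fattening while its Fourier support is contained in the box $[-T^*, T^*]^{dM}$, and one must verify that the $L^1$ and $L^\infty$ parts of the calculation combine with the correct constants in dimension $dM$. No further probabilistic input is needed beyond part (a); once the smoothing scheme is fixed, the entire estimate is Fourier-analytic.
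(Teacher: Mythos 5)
First, a point of reference: the paper does not prove this statement at all — it is quoted verbatim from Gou\"ezel \cite{Gou-asip} (his Theorem 3.4 and Lemma 3.5), so the comparison is with Gou\"ezel's proof. Your part (a) is exactly right and is all that is intended: it \emph{is} the Strassen--Dudley theorem, which the paper itself points to via \cite[Theorem 6.9]{Billingsley}.

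For part (b) your smoothing/Esseen route is genuinely different from the source, and as sketched it has two concrete problems. First, the final step: a \emph{pointwise} bound of order $(T^*)^{dM/2}\left(\int|\Psi_Y-\Psi_Z|^2\right)^{1/2}$ on the difference of the mollified densities must still be integrated over $A\cap K$ with $K=\{|y_j|\le T^*\ \forall j\}$, whose Lebesgue measure is of order $(T^*)^{dM}$; this produces $(T^*)^{3dM/2}$, not the claimed $(T^*)^{dM/2}$. The correct bookkeeping is an $L^2$ bound on the density difference (Plancherel, using $|\hat{K}|\le 1$) followed by Cauchy--Schwarz over the \emph{physical} box $K$, not Cauchy--Schwarz over the Fourier box. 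Second, a kernel with compactly supported Fourier transform has only polynomially decaying spatial tails, so the mass of the mollifier outside an $\eps$-ball contributes an extra error term (roughly $(\eps T^*)^{-1}$ per coordinate) which does not appear in the stated inequality and cannot be silently ``absorbed into the $\eps$-fattening'' without leaving a trace in the final bound. Gou\"ezel's actual argument avoids mollification entirely: one may assume $\int|\Psi_Y-\Psi_Z|^2<\infty$, and this finiteness already forces the signed measure $P_Y-P_Z$ to be absolutely continuous with an $L^2$ density $h$ (its Fourier transform lies in $L^2$). Then for any Borel set $A$ one writes $P_Y(A)-P_Z(A)=\int_{A\cap K}h+\int_{A\setminus K}h$; the second integral is controlled by the tail sums (whence the two sums over $j$, after symmetrizing in $Y$ and $Z$), and the first by $\mathrm{Leb}(K)^{1/2}\|h\|_{L^2}=(C_dT^*)^{dM/2}\left(\int|\Psi_Y-\Psi_Z|^2\right)^{1/2}$ via Cauchy--Schwarz and Plancherel. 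Since this bounds $\sup_A|P_Y(A)-P_Z(A)|$, which dominates the Prokhorov distance, the inequality follows with no smoothing and no fattening. I recommend either reproducing that direct argument or simply citing \cite{Gou-asip} as the paper does.
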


 We can now recall the main steps of~\cite[Proof of Proposition 5.1]{Gou-asip}. We emphasise that these steps require (H2), (H3), Theorem~\ref{thm-StrD} and that the observable of interest (in our set
 up $W_{l}^{c_{2^n}}$) is in $L^1(\mu)$.
\\[2mm]
\begin{proofof}{Lemma~\ref{prop:i.i.d.app}}
Recall $X_{n,j}=\sum_{l\in I_{n,j}} W_l^{c_{2^n}}$
Let
$X_n=(X_{n,j})_{0\le j < F(n)}$
    be vectors in $\R^{d F(n)}$.
To exploit (H2), we shall proceed as in~\cite{Gou-asip}
 working with variants of $X_{n,j}$ whose characteristic functions will be supported on
 a neighborhood of $0$. More precisely, given $\eps_0$ as in (H2),~\cite[Proposition 3.8]{Gou-asip} ensures that there exists
 a symmetric random variable $V$ in $\R^d$, $V\in L^q$, for every $q\ge 2$, whose characteristic function is supported in $\{|t|\le\eps_0\}$.
 Let $V_{n,j}$ be independent copies of $V$, independent of everything else. Set
 $X_{n,j}^*=X_{n,j}+V_{n,j}$
 and let $X_n^*=(X_{n,0}^*,X_{n,1}^*,\ldots,  X_{n,F(n)-1}^*)$ be vectors in $\R^{dF(n)}$.

Let $Q_n$ be a random variable distributed like $X_n$ and  consider the $\R^{d D(n)}$ random vectors
$(X_1, X_2,\ldots, X_{n-1}, X_n)$
 and $(X_1, X_2,\ldots, X_{n-1}, Q_n)$, where
 $D(n)=\sum_{j=1}^n F(j)\ll 2^{\beta n}$. Note that $J_{n,0}$ is a gap
 of length $k\approx 2^{\eps_1 n+\beta n}$ between between $(X_j)_{j<n}$
and $X_n$. Let $\phi$ and $\gamma$ be the characteristic
 functions of $(X_1, X_2,\ldots, X_{n-1}, X_n)$ and $(X_1, X_2,\ldots, X_{n-1}, Q_n)$. Reasoning as in the first part of the proof of~\cite[Lemma 5.2]{Gou-asip},
 we have under the current (H2) that
 for all $|t_{m,j}|\le\eps_0$ and for $n$ large enough,
 \[
  |\phi-\gamma|\ll e^{-c2^{\eps_1 n+\beta n}}.
 \]
Set $Q_n^*=Q_n+V_n$, where $V_n$ is an independent copy of $V$, independent of everything else.
Consider the $\R^{d D(n)}$ random vectors
$(X_1^*, X_2^*,\ldots, X_{n-1}^*, X_n^*)$
 and $(X_1^*, X_2^*,\ldots, X_{n-1}^*, Q_n^*)$ with characteristic
 functions
 $\phi^*$ and $\gamma^*$, which  are obtained by multiplying $\phi,\gamma$
 by the characteristic function of $V$. Thus, $|\phi^*-\gamma^*|\ll e^{-c2^{\eps_1 n+\beta n}}$.

As in the last paragraph of~\cite[Proof of Lemma 5.2]{Gou-asip},
the distance between the distributions requires only that for all $n$, $\|W_{l}^{c_{2^n}}\|_{L^1(\mu)}\le C$,
 for some $C>0$ independent of $n$ and $l$; this is true
 because $v\in L^1(\mu)$. We apply Theorem~\ref{thm-StrD}(b) with $T^*=e^{2^{\eps_1 n/2}}$ and obtain, by the Markov inequality, that the terms associated
 to the tails of the distributions are bounded by $\ll 2^n e^{-2^{\eps_1 n/2}}$.

 Putting the previous two facts together and using Theorem~\ref{thm-StrD},
 \begin{align}\label{eq:g1}
 \pi\left((X_1^*, X_2^*,\ldots, X_{n-1}^*, X_n^*), (X_1^*, X_2^*,\ldots, X_{n-1}^*, Q_n^*)\right)\ll 4^{-n}.
\end{align}

Using \eqref{eq:g1} and proceeding as in~\cite[Proof of Corollary 5.3]{Gou-asip}, we obtain that there exists a coupling
between $(X_1^*, X_2^*,\ldots)$ and $(R_1^*, R_2^*,\ldots)$,
where the variables $R_n^*=(R_{n,j}^*)_{j<F(n)}$ are distributed as the $X_n^*$, but independent of each other, so that
\begin{align}\label{eq:g2}
 \mu\left(|X_{n,j}^*-R_{n,j}^*|\ge C4^{-n}\right)\le C4^{-n}.
\end{align}

The next step is then to show -- based on (H2), with an argument analogous to~\cite[Proof of Lemma 5.4]{Gou-asip} -- that
\begin{align}\label{eq:g3}
 \pi\left( (R_{n,j}^*)_{0\le j<F(n)}, (\hat X_{n,j}^*)_{0\le j<F(n)}\right)\le C4^{-n},
\end{align}
where $\hat X_{n,j}^*= \hat X_{n,j}+ V_{n,j}$ with
$(\hat X_{n,j})_{n\in\N, 0\le j<F(n)}$ a sequence of independent random variables distributed like $X_{n,j}$. We recall the main ingredients of the argument in~\cite[Proof of Lemma 5.4]{Gou-asip} needed to obtain~\eqref{eq:g3},
partly for completeness, partly for use in the proof of Lemma~\ref{lemma:gaps} below.
The main idea of~\cite[Proof of Lemma 5.4]{Gou-asip} is to make the variables $(R_{n,j}^*)_{0\le j<F(n)/2}$ independent of $(R_{n,j}^*)_{F(n)/2<j<F(n)}$ by using a large gap $J_{n,F(n)/2}$ and proceed in each remaining half using the gap in the middle of this half and repeating the procedure. For $0\le i<\lfloor n\beta\rfloor$ define $\tilde Y_{n,j}^i$ so that for $0\le k<2^{\lfloor n\beta\rfloor-1}$, the random variable
$\mathcal{\tilde Y}_{n,k}^i=( \tilde Y_{n,j}^i)_{k2^i\le j<(k+1)2^i}$ is distributed like $(X_{n,j}^*)_{k2^i\le j<(k+1)2^i}$ and so that $\mathcal{\tilde Y}_{n,k}^i$ is independent
of $\mathcal{\tilde Y}_{n,k'}^i$ if $k\ne k'$. The argument of~\cite[Proof of Lemma 5.4]{Gou-asip} simply goes through provided that the distance between the characteristic function
of the random variables $\mathcal{\tilde Y}_{n,k}^i$ and
$(\mathcal{\tilde Y}_{n,2k}^{i-1}, \mathcal{\tilde Y}_{n,2k+1}^{i-1})$ is $O(e^{-c2^{n\eps_1+i}})$. Since $\mathcal{\tilde Y}_{n,k}^i$ consists of $2^i$ sums of variables
$W_l^{c_{2^n}}$ along blocks of length $2^{n-\lfloor n\beta\rfloor}$ with a gap
of size $\approx 2^{n\eps_1+i}$ in the middle, (H2) applies and, the required result $O(e^{-c2^{n\eps_1+i}})$ follows. Finally, let $\hat X_{n,j}^*=\tilde Y_{n,j}^0$.

Using Theorem~\ref{thm-StrD}, equations~\eqref{eq:g1} and~\eqref{eq:g3}, and Borel-Cantelli lemma (after summing over $j$ and $n$), the following holds almost surely:
\[
   \left|\sum_{(n', j')\prec (n,j)}(X_{n',j'}^*-\hat X_{n',j'}^*)\right|=
   o(2^{n/2})=o(c_{2^n}).
  \]
To obtain the result for $X_{n,j}$, one needs to argue that the contribution of  $V_{n,j}$ is negligible. As in~\cite[Proof of Proposition 5.1]{Gou-asip}, this can be done using the law of the iterated logarithms
for the centered, independent random variables $V_{n,j}\in L^2(\mu)$:
\begin{equation*}
 \left|\sum_{(n', j')\prec (n,j)}V_{n',j'}\right|\le
 C N_{n,j}^{1/2}\cdot \sqrt{\log\log(N_{n,j})},\text{ almost surely},
\end{equation*}
where $N_{n,j}$ is the cardinality of the set
$\{(n', j'): (n', j')\prec (n,j)\}$.
Since $N_{n,j} \le \sum_{n'=1}^n \sum_{j'=0}^{F(n')-1} 1 \le C 2^{\beta n}$,
\[
 \left|\sum_{(n', j')\prec (n,j)}V_{n',j'}\right|=o(2^{(\beta n+\eps n)/2}),
\]
for any $\eps>0$. Since $\beta<1$ and $\eps$ is arbitrary, $o(2^{(\beta n+\eps n)/2})=o(2^{n/2})$, we obtain
\begin{align}\label{eq:Aaa}
 \left|\sum_{(n', j')\prec (n,j)}(X_{n',j'}-\hat X_{n',j'})\right|=
   o(2^{n/2})=o(c_{2^n}),\text{ almost surely},
  \end{align}
as required.~\end{proofof}

 \subsection{Proof of Lemma~\ref{lemma:Gaus}}
\label{sec:Gaus}

Let $(\hat X_{n,j})_{n\in\N,\, 0\le j<F(n)}$  be the independent random variables obtained in Lemma~\ref{prop:i.i.d.app}.

In this section we exploit assumption (HL1) with $d_n=\sqrt{n L(n) \ell_1(n)^{-99}}$ as in~\eqref{eq:dn} and $r=i_{n,j}$.
More precisely, fix $(n,j)$ and for $l=0,\dots, |I_{n,j}|$ let
\[
\tW= v\cdot \mathbf{1}_{d_{2^n}\le|v|\le c_{2^n}}, \qquad \tW_l=W_{l+i_{n,j}}^{c_{2^n}} -W_{l+i_{n,j}}^{d_{2^n}}=\tW\circ T^{l+i_{n,j}}.
\]

Note that
\begin{equation}\label{eq:d_n_c_n}
\ell^*(c_{2^n}) \asymp \ell^*(d_{2^n}) \asymp n \quad \text{and} \quad \frac{c_n}{d_n} \asymp (\ell_1 (n))^{50}.
\end{equation}
Let $Y_{n,j}=\sum_{l\in I_{n,j}} W_l^{d_{2^n}}$, and let $(\hat{Y}_{n,j})_{n\in\N, 0\le j<F(n)}$ be a sequence of independent r.v.s distributed as $(Y_{n,j})_{n\in\N, 0\le j<F(n)}$.

The first result below says that the additional truncation in terms of $d_{2^n}$ causes an error which is almost surely $o(c_{2^n})$.

\begin{lemma}\label{lemma:d_n_c_n}
For any $\varepsilon>0$ we have
\[
\sum_{n=1}^{\infty}  \mu \left( \left|  \sum_{j=0}^{F(n)-1} (\hat{X}_{n,j} - \hat{Y}_{n,j}) \right|>\varepsilon c_{2^n} \right) <\infty.
\]
\end{lemma}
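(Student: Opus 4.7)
The proof reduces essentially to a direct application of (HL1), combined with the summability condition (HL0)(iii). My plan proceeds in three steps.

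First, I would reduce the tail probability of $\hat X_{n,j}-\hat Y_{n,j}$ back to a stationary quantity. For fixed $(n,j)$, both $\hat X_{n,j}$ and $\hat Y_{n,j}$ are built from the original variables $v_l$, $l\in I_{n,j}$, and the coupling from Lemma~\ref{prop:i.i.d.app} only decorrelates across distinct $(n,j)$ while preserving the joint law within a single block. Hence one has the distributional identity
\[
|\hat X_{n,j} - \hat Y_{n,j}| \stackrel{d}{=} \Big| \sum_{l\in I_{n,j}} \tW_l \Big|,
\qquad \tW_l = v_l\,\mathbf{1}_{d_{2^n}<|v_l|\le c_{2^n}}.
\]
This is exactly the partial sum $\cS_K$ of (HL1) with $r=i_{n,j}$ (the left endpoint of $I_{n,j}$) and $K=|I_{n,j}|$. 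Since $|I_{n,j}|\asymp 2^{(1-\beta)n}$ is comfortably smaller than $(d_{2^n})^2\asymp 2^n\cdot n\cdot\ell_1(2^n)^{-99}$, the hypothesis $K<(d_{2^n})^2$ of (HL1) is satisfied.

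Second, I would bound $\mu(B_{K,\varepsilon}) = \mu(|\cS_K|>\varepsilon c_{2^n})$ using both parts of (HL1). Writing the maximal event $\cup_{k\le K} B_{k,\varepsilon}$ as the disjoint union of the first-entrance sets $N_{k,\varepsilon}$, and splitting each $N_{k,\varepsilon}$ according to whether $B_{K,\varepsilon/2}$ occurs, one obtains
\[
\mu(B_{K,\varepsilon}) \le \mu(B_{K,\varepsilon/2}) + \sum_{k=1}^K \mu(N_{k,\varepsilon}\setminus B_{K,\varepsilon/2}).
\]
By Chebyshev combined with (HL1)(i), $\mu(B_{K,\varepsilon/2}) \ll K\cdot LL(n)/c_{2^n}^2$, and (HL1)(ii) gives a bound of the same order for the second term. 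Therefore
\[
\mu\!\left(|\hat X_{n,j}-\hat Y_{n,j}|>\varepsilon c_{2^n}\right) \ll \frac{|I_{n,j}|\cdot LL(n)}{c_{2^n}^2}.
\]

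Finally, I would sum over $j$ using that the $F(n)$ intervals $I_{n,j}$ (together with the gaps) partition $[2^n,2^{n+1})$, so $\sum_{j}|I_{n,j}|\le 2^n$, yielding
\[
\sum_{j=0}^{F(n)-1}\mu(|\hat X_{n,j}-\hat Y_{n,j}|>\varepsilon c_{2^n}) \ll \frac{2^n\cdot LL(n)}{c_{2^n}^2},
\]
and then sum over $n$, invoking (HL0)(iii) to conclude. The whole argument is elementary once (HL1) is in hand: it is just Chebyshev, a first-entrance decomposition, and the summability built into (HL0)(iii). The real difficulty is therefore displaced onto the verification of (HL1)(ii), whose proof for the infinite horizon Lorentz gas constitutes the main technical content of Section~\ref{sec:fm}; that is the true obstacle of the overall program, not the present lemma.
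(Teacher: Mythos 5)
Your proposal is correct, and its core -- a Chebyshev bound at scale $\eps c_{2^n}$, the second-moment estimate from (HL1)(i) giving $\ll |I_{n,j}|\cdot LL(n)$, and summation via $\sum_j |I_{n,j}|\le 2^n$ together with (HL0)(iii) -- is exactly the paper's argument. The one place you deviate is Step 2: to bound $\mu(|\sum_{l\in I_{n,j}}\tW_l|>\eps c_{2^n})=\mu(B_{K,\eps})$ you pass through the first-entrance decomposition of $\cup_{k\le K}B_{k,\eps}$ and invoke (HL1)(ii). This is valid (the inequality you write bounds the larger quantity $\mu(\cup_k B_{k,\eps})$, and both resulting terms are $\ll K\cdot LL(n)/c_{2^n}^2$), but it is unnecessary for the present lemma: since $B_{K,\eps}\subset B_{K,\eps/2}$, Chebyshev applied directly to $\cS_K$ with the (HL1)(i) bound already suffices, and this is all the paper uses here. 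What your detour actually establishes is the stronger \emph{maximal} statement $\sum_{n}\sum_j\mu(\max_{r<|I_{n,j}|}|\cS_r|\ge\eps c_{2^n})<\infty$, which the paper records separately as Lemma~\ref{lemma:maxd_n_c_n} (needed for Lemma~\ref{lemma:max}); that is precisely where (HL1)(ii) earns its keep. So you have in effect merged the two lemmas into one -- harmless, and arguably economical, but it obscures the fact that the present statement needs only the second-moment half of (HL1). Your closing remark that the real work lies in verifying (HL1)(ii) for the Lorentz gas is accurate for the maximal version, though for this lemma alone only (HL1)(i) is at stake.
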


\begin{proof}
We will use Chebyshev's inequality, hence need a bound on the second moment of
\[
\sum_{j=0}^{F(n)-1} (\hat{X}_{n,j} - \hat{Y}_{n,j})= \sum_{j=0}^{F(n)-1} \sum_{l\in I_{n,j}} (W_l^{c_{2^n}} -W_l^{d_{2^n}}).
\]
Using (HL1) and~\eqref{eq:d_n_c_n},
\begin{align}\label{eq:tilde_w_second}
\nonumber\E_\mu \left(\left|\sum_{j=0}^{F(n)-1} (\hat{X}_{n,j} - \hat{Y}_{n,j})\right|^2\right) &\ll  F(n) \cdot \E_\mu\left(\left|\sum_{l\in I_{n,j}}(\tW_l)\right|^2\right) \\                                                                                                                                                                                          &\ll F(n) \cdot |I_{n,j}| \cdot L\left( \frac{c_{2^n}}{d_{2^n}}\right)\ll 2^n \cdot LL(n).
\end{align}
Hence, Chebyshev's inequality implies
\begin{align*}
\mu \left( \left|\sum_{j=0}^{F(n)-1}  \hat{X}_{n,j} - \hat{Y}_{n,j} \right|>\varepsilon c_{2^n} \right) \ll& \frac{2^n \cdot LL(n)}{ (c_{2^n})^2}
\ll\frac{LL(n)}{n \ell_1(2^n)}
\end{align*}
which is summable in $n$, by assumption (HL0)(iii).
\end{proof}

It remains to couple the independent variables $\hat{Y}_{n,j}$ truncated at the lower level $d_{2^n}$ to appropriate Gaussians, as stated in the next Lemma, the analogue of~\cite[Lemma 5.6]{Gou-asip}
in the present set up.

\begin{lemma}\label{lemma:gausshatynj}
 Let $n\in\N$. There exists a coupling between $(\hat{Y}_{n,0},\hat{Y}_{n,1},\ldots, \hat{Y}_{n,F(n)-1})$ and $(S_{n,0},S_{n,1},\ldots, S_{n,F(n)-1})$,
 where $(S_{n,j})_{n\in\N,0\le j<F(n)}$ is a sequence of independent, centered  Gaussian vectors with $cov S_{n,j}=cov \hat{Y}_{n,j}$ and so that for any $\eps>0$,
 \[
  \sum_n\mu\left(\max_{1\le i\le F(n)}\left|\sum_{j=0}^{i-1}Y_{n,j}-S_{n,j}\right|\ge \frac{c_{2^n}}{L(n)}\right)<\infty.
 \]
\end{lemma}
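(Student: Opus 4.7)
The plan is to apply a multidimensional strong Gaussian approximation theorem (with maximum partial-sum control, of Einmahl/Sakhanenko/Zaitsev type, in the spirit of~\cite[Lemma~5.6]{Gou-asip}) to the independent centered $\R^d$-valued random vectors $(\hat Y_{n,j})_{0\le j<F(n)}$. The main input will be a uniform fourth moment bound on each block, furnished by (H3)(ii); the very small negative power of $\ell_1$ built into $d_{2^n}$ (the exponent $-99$) has been chosen so that the resulting probability estimate is comfortably summable in $n$.

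First, I would verify the fourth moment bound on each block. Since $d_{2^n}^2 \asymp 2^n n (\ell_1(2^n))^{-99}$ and $|I_{n,j}| \asymp 2^{n(1-\beta)}$, the parameter pair $(m,R)=(|I_{n,j}|,d_{2^n})$ satisfies the scaling constraints $m=o(R^{2-r_1})$ and $R=o(m^{r_2})$ of (H3)(ii) for suitable $r_1,r_2>0$ depending on $\beta\in(0,1)$. Hence
\[
\E_\mu|\hat Y_{n,j}|^4 = \E_\mu\Bigl|\sum_{l\in I_{n,j}} W_l^{d_{2^n}}\Bigr|^4 \ll |I_{n,j}|\cdot (d_{2^n})^2.
\]

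Next, I invoke the multidimensional strong approximation theorem for independent centered random vectors with finite fourth moments: for each fixed $n$, it yields a coupling of $(\hat Y_{n,j})_j$ with independent centered Gaussians $(S_{n,j})_j$ having matching covariances $cov\, S_{n,j}=cov\, \hat Y_{n,j}$, satisfying
\[
\mu\Bigl(\max_{1\le i\le F(n)}\Bigl|\sum_{j=0}^{i-1}(\hat Y_{n,j}-S_{n,j})\Bigr|\ge x\Bigr) \ll x^{-4}\sum_{j=0}^{F(n)-1}\E_\mu|\hat Y_{n,j}|^4
\]
for every $x>0$. Setting $x=c_{2^n}/L(n)$ and combining with Step 1, $F(n)|I_{n,j}|\asymp 2^n$, and $c_{2^n}^2\asymp 2^n n\,\ell_1(2^n)$, the right hand side becomes
\[
\frac{L(n)^4\cdot F(n)|I_{n,j}|(d_{2^n})^2}{c_{2^n}^4} \asymp \frac{(\log n)^4}{n\,(\ell_1(2^n))^{101}} \ll \frac{1}{n\,(\log n)^{97}},
\]
using $\ell_1(2^n)\gg L(n)=\log n$ from~(HL0). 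This is clearly summable in $n$, as required.

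The main obstacle is the second step, i.e.\ selecting a multidimensional Gaussian coupling theorem whose error bound on the maximum partial-sum discrepancy does not carry a multiplicative factor of order $\log F(n)\asymp n$ (which would destroy summability). Polylogarithmic-in-$F(n)$, i.e.\ polynomial-in-$\log n$, factors are harmless since they are easily absorbed by the generous exponent $-99$ built into $d_{2^n}$; this is precisely why that exponent was chosen so small.
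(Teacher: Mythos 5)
Your overall strategy coincides with the paper's: both apply a Zaitsev-type multidimensional strong approximation (recalled in \cite[Proposition 5.5]{Gou-asip}) to the independent blocks $\hat Y_{n,j}$ with $p=4$, feed in the fourth-moment bound $\E_\mu|\hat Y_{n,j}|^4\ll |I_{n,j}|(d_{2^n})^2$ coming from (H3)(ii) at truncation level $d_{2^n}$, and extract summability from the exponent $-99$ in \eqref{eq:dn}; your scaling checks and the final arithmetic are correct. However, the step you yourself flag as ``the main obstacle'' is a genuine gap, and it is not merely a matter of avoiding a multiplicative $\log F(n)$ factor. The inequality you assert, valid for \emph{every} $x>0$ with right-hand side $x^{-4}\sum_j\E_\mu|\hat Y_{n,j}|^4$, is not what Zaitsev's theorem provides. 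That theorem requires (i) grouping the $F(n)$ summands into $s$ consecutive blocks whose covariance matrices are comparable, uniformly from above \emph{and below}, to $M^2 I$ where $M^4=\sum_j\E_\mu|\hat Y_{n,j}|^4$, and (ii) it yields the bound $c_0z^{-4}+e^{-C_0z}$ at deviation level $Mz$ only in the range $z\ge C_0\log s$.

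The paper closes exactly this gap. By (H3)(iii) and the nondegeneracy of $\Sigma$, $cov\,\hat Y_{n,j}\asymp |I_{n,j}|\,\ell^*(d_{2^n})\asymp 2^{(1-\beta)n}n$, so the total covariance is $\gg 2^n n$ while $M^2\ll 2^n\sqrt n$; hence the $\hat Y_{n,j}$ can be grouped into $s\asymp\sqrt n\,\ell_2(n)$ consecutive blocks satisfying the two-sided covariance condition. Then $\log s\asymp\log n$, and the choice $z=n^{1/4}L(n)$ satisfies both $z\gg\log s$ and $Mz\ll c_{2^n}/L(n)$, giving the summable bound $c_0(nL(n)^4)^{-1}+e^{-C_0n^{1/4}}$. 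Without blocking (one variable per block) the hypothesis fails in two ways: $\log s\asymp n$ while the deviation $x=c_{2^n}/L(n)$ corresponds to $z=x/M$ of size only $n^{1/4}$ times slowly varying factors, so $z\ge C_0\log s$ is violated; and each individual $cov\,\hat Y_{n,j}\asymp 2^{(1-\beta)n}n$ is far smaller than $M^2\asymp 2^n\sqrt n$, so the lower covariance bound fails blockwise. You should therefore add the covariance comparison and the blocking argument before invoking the coupling; with those in place your computation goes through as in the paper.
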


\begin{proof}
 We apply~\cite[Corollary 3]{zait} as recalled
 in~\cite[Proposition 5.5]{Gou-asip} to $\hat{Y}_{n,j}$.
 We briefly recall the statement. Let $Y_0,\ldots, Y_{b-1}$ be independent centered $\R^d$-valued random variables. Let $p\ge 2$ and
 $M=\left(\sum_{j=0}^{b-1}\E|Y_j|^p\right)^{1/p}$. Assume that there exists a sequence $0=m_0<m_1<\ldots m_s=b$ so that given $\Xi_k= Y_{m_k}+\ldots +Y_{m_{k+1}-1}$, $B_k=cov\Xi_k$,
 the following holds for all $w\in\R^d$, for all $0\le k\le s$ and for some $C>1$:
 \begin{align}\label{eq:MG}
  100 M^2|w|^2\le \langle B_k w,w\rangle\le 100 CM^2 |w|^2.
 \end{align}
Then there exits a coupling between $(Y_0,\ldots Y_{b-1})$ and a sequence of independent Gaussian random vectors $(S_0,\ldots S_{b-1})$  with $cov S_j=cov Y_j$
and so that
\begin{align}\label{eq:conMG}
 \mu\left(\max_{1\le i\le b}\left|\sum_{j=0}^{i-1}Y_{j}-S_{j}\right|\ge Mz\right)<
 c_0z^{-p}+e^{-C_0z},
\end{align}
for all $z\ge C_0\log s$ with $C_0$ a constant that depends only on $C$, dimension $d$ and exponent $p$.

We will apply this statement with $b=F(n)$, $Y_j=\hat{Y}_{n,j}$, $p=4$ and $z=n^{1/4} \cdot L(n)$. Let us check that the conditions hold.
By (H3)(ii) we have (recall $|I_{n,j}|\ll 2^{(1-\beta) n}$ with $\beta<1$ while $2^{n/2}= o(d_{2^n})$, so the required relation of number of terms and truncation level, $|I_{n,j}|=o((d_{2^n})^2)$,  holds)
\[
\E(|\hat{Y}_{n,j}|^4) \ll |I_{n,j}| (d_{2^n})^2,
\]
and thus
\begin{align}
M=\left(\sum_{j=0}^{F(n)-1} \E(|\hat{Y}_{n,j}|^4) \right)^{1/4} \ll & \left( F(n) 2^{(1-\beta) n} d_{2^n}^2 \right)^{1/4} \ll 2^{n/4} d_{2^n}^{1/2} \nonumber\\
\ll & 2^{n/2}\left( L(2^n) \ell_1(2^n)^{-80} \right)^{1/4}\ll 2^{n/2} n^{1/4} (L(n))^{-20}. \label{eq:Mwith4}
\end{align}
This implies
\[
M^2\ll 2^n \sqrt{n},
\]
while, by (H3) (ii)
\[
\sum_{j=0}^{F(n)-1} cov \hat{Y}_{n,j} \asymp \sum_{j=0}^{F(n)-1} |I_{n,j}| \ell^*(d_{2^n}) \gg 2^n n
\]
which is larger in magnitude than $M^2$. On the other hand, recalling that $\Sigma$ is non degenerate
(by (H1)), there must exist some constant $C_1$ so that for any large $n$, for any  $0\le m<m'<F(n)$ for any vector $w$ in $\R^d$,
\[
 C_1^{-1}(m'-m)2^{(1-\beta)n} |w|^2 \le
 \Big\langle \sum_{j=m}^{m'-1} cov Y_{n,j} w, w\Big\rangle
 \le C_1(m'-m)2^{(1-\beta)n}|w|^2
\]
and each individual term
is bounded by
\[
 |cov Y_{n,j}|\, |w|^2\le \E_\mu(|Y_{n,j}|^{2})|w|^2\le (\E_\mu(|Y_{n,j}|^{4}))^{1/2}|w|^2\le M^2|w|^2.
\]
So, one can group $Y_{n,j}$ into consecutive blocks so that~\eqref{eq:MG} holds
with some constant $C$. Note also that, for the number of blocks, $s=\sqrt{n} \ell_2(n)$
with some slowly varying function $\ell_2$, hence the condition $z\gg \log s$ holds with the above specified $z=n^{1/4} L(n)$.

Now, using \eqref{eq:Mwith4}, we have
\[
z\cdot M =n^{1/4} L(n) 2^{n/2} n^{1/4} (\ell_1(2^n))^{-20} \ll \sqrt{2^n n \ell_1(2^n)} (\ell_1(2^n))^{-1} \ll \frac{c_{2^n}}{L(n)}
\]
where we have used that $L(n)\ll \ell_1(2^n)$ by (HL0)(ii) (see also (HL0)(iii)). The conclusion follows by \eqref{eq:conMG} as
\begin{align*}
\mu\left(\max_{1\le i\le F(n)}\left|\sum_{j=0}^{i-1}Y_{n,j}-S_{n,j}\right|\ge \frac{c_{2^n}}{L(n)}\right) \, < \,&
 \mu\left(\max_{1\le i\le b}\left|\sum_{j=0}^{i-1}Y_{j}-S_{j}\right|\ge Mz\right)\\
 <\,& c_0z^{-4}+e^{-C_0z} \, < \, c_0 (n L(n)^4)^{-1} + e^{-C_0n^{1/4}}
\end{align*}
which is summable in $n$.
\end{proof}

Lemmas~\ref{lemma:d_n_c_n} and~\ref{lemma:gausshatynj} imply that there exist a coupling such that
\[
\max_{1\le i\le F(n)}\left|\sum_{j=0}^{i-1}\hat{X}_{n,j}-S_{n,j}\right|=o(c_{2^n})
\]
almost surely, as $n\to \infty$. As
\[
\sum_{m=0}^n c_{2^m} \le \left(\sum_{m=0}^n 2^{m/2}\right) \sqrt{\ell^*(2^n) \ell_1(2^n)} \ll c_{2^n},
\]
it follows that
\[
 \sum_{(n', j')\prec (n,j)}\hat{X}_{n',j'}-S_{n',j'}=o(c_{2^n}),
\]
almost surely, as $(n,j)\to \infty$.

We can now complete

\begin{pfof}{~Lemma~\ref{lemma:Gaus}} Let $S_{n,j}$ be independent, centered  Gaussian vectors  obtained in Lemma~\ref{lemma:gausshatynj}.
We still need to construct a sequence of independent Gaussian random vectors $\tZ_{n,j}$ with $cov (\tZ_{n,j})= \ell^*(c_{2^n}) \cdot |I_{n,j}| \cdot \Sigma^2$
and a coupling between $S_{n,j}$ and $\tZ_{n,j}$. We may rely on \cite[Lemma 5.7]{Gou-asip}, which provides such a construction,  such that
\[
 \sum_{(n', j')\prec (n,j)}\left(\frac{S_{n',j'}}{\sqrt{\ell^*(c_{2^n})}}-\frac{\tZ_{n',j'}}{\sqrt{\ell^*(c_{2^n})}}\right)=o(2^{(\beta+\eps_1)n/2}).
\]
The conclusion follows -- together with
Lemma~\ref{lemma:gausshatynj}-- as $c_n=o(\sqrt{\ell^*(c_{2^n})}\cdot 2^{(\beta+\eps_1)n/2})$, and $\beta+\eps_1<1$.~\end{pfof}

\subsection{Proof of Lemma~\ref{lemma:max}}
\label{sec:maxpr}

Throughout this section we use assumptions (H3)(ii), (HL2) and (HL3), along with the corresponding notation.
In particular, with $\bd_{2^n}=2^{n(1-\varsigma)/2}$ (as in \eqref{eq:dn}), recall the notation $\bW= v\cdot \mathbf{1}_{\bd_{2^n}\le|v|\le c_{2^n}}$, and let, with $r=i_{n,j}$,
$\bW_{l}=\bW\circ T^{l+r}$ for $l=0,\dots,|I_{n,j}|$.

First, we need to derive the following Lemma from assumptions (H3)(ii) and (HL2).

\begin{lemma}\label{lemma:maxd_n_c_n}
For any $\eps>0$
\begin{equation}\label{eq:maxd_n_c_n}
\sum_{n=1}^{\infty} \sum_{j=0}^{F(n)-1}  \mu\left(\max_{r<|I_{n,j}|}\left|\sum_{l=0}^{r}\bW_l\right|\ge
   \eps c_{2^n}\right)<\infty.
\end{equation}
\end{lemma}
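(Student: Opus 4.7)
The plan is to combine a Doob-style maximal inequality with the two-level decomposition provided by (HL1). Apply the setup of (HL1) with the shift $r=i_{n,j}$ and $K=|I_{n,j}|$. The condition $K<(d_{2^n})^2$ required by (HL1) holds because $|I_{n,j}|\asymp 2^{(1-\beta)n}$ with $\beta<1$, while $(d_{2^n})^2\asymp 2^n\cdot n\cdot \ell_1(2^n)^{-99}$ is much larger. By definition of $N_{k,\eps}$ as the first-hitting partition of the excursion above $\eps c_{2^n}$, the maximal event decomposes as a disjoint union:
\[
\Big\{\max_{r<|I_{n,j}|}\Big|\sum_{l=0}^{r}\tW_l\Big|\ge \eps c_{2^n}\Big\}=\bigsqcup_{k=0}^{K-1} N_{k,\eps}.
\]

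The key idea is to split each $N_{k,\eps}$ into $N_{k,\eps}\cap B_{K,\eps/2}$ and $N_{k,\eps}\setminus B_{K,\eps/2}$, reflecting whether the terminal sum is still large or not. For the first piece, disjointness of the $N_{k,\eps}$ gives $\sum_{k}\mu(N_{k,\eps}\cap B_{K,\eps/2})\le\mu(B_{K,\eps/2})$, and by Chebyshev together with (HL1)(i) this is bounded by $\frac{4}{\eps^2(c_{2^n})^2}\,\E_\mu(|\cS_K|^2)\ll \frac{K\cdot LL(n)}{(c_{2^n})^2}$. For the second piece, the bound is exactly the content of (HL1)(ii). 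Adding the two yields
\[
\mu\Big(\max_{r<|I_{n,j}|}\Big|\sum_{l=0}^{r}\tW_l\Big|\ge \eps c_{2^n}\Big)\ll \frac{|I_{n,j}|\cdot LL(n)}{(c_{2^n})^2}.
\]

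To conclude, sum over $j$ using $F(n)\cdot|I_{n,j}|\asymp 2^n$ (from \eqref{eq:Inj}): the inner sum is $\ll \frac{2^n\cdot LL(n)}{(c_{2^n})^2}\asymp \frac{LL(n)}{n\cdot \ell_1(2^n)}$, and then (HL0)(iii) delivers summability in $n$. The only nontrivial step is recognizing that the hypotheses in (HL1) are tailored precisely to execute this maximal-inequality splitting in the dependent setting; the rest is bookkeeping with the scales $c_{2^n}$, $d_{2^n}$, $|I_{n,j}|$ and $F(n)$ defined earlier.
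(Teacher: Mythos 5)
Your proposal is correct and follows essentially the same route as the paper: the first-entrance decomposition of the maximal event into the disjoint sets $N_{k,\eps}$, the splitting of each according to whether the terminal sum lands in $B_{K,\eps/2}$, Chebyshev plus (HL1)(i) for the piece inside $B_{K,\eps/2}$, (HL1)(ii) for the complementary piece, and the final summation over $j$ and $n$ via (HL0)(iii) exactly as in Lemma~\ref{lemma:d_n_c_n}. Your explicit check that $K=|I_{n,j}|<(d_{2^n})^2$ is a small but welcome addition that the paper leaves implicit.
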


\begin{proof}
Let us consider first the following weaker bound
\begin{equation}\label{eq:nomaxd_n_c_n}
\sum_{n=1}^{\infty} \sum_{j=0}^{F(n)-1}  \mu\left(\left|\sum_{l=0}^{|I_{n,j}|} \bW_l\right|\ge
   \eps c_{2^n} \right)<\infty.
\end{equation}

It is useful to compare \eqref{eq:nomaxd_n_c_n} with the bound in Lemma~\ref{lemma:d_n_c_n}. Here we want to bound the tail properties for random variable
that arises as a sum with a number of terms which is $|I_{n,j}|\ll 2^{n(1-\beta)}$. This is far less than the number of terms, $\asymp 2^n$ in Lemma~\ref{lemma:d_n_c_n}.
In particular, in the context of \eqref{eq:nomaxd_n_c_n}, (H3)(ii) applies (for $\varsigma$ sufficiently small).
We combine it with the Markov inequality:
By (H3)(ii),
\begin{equation}\label{eq:bar_w_fourth}
\E_\mu\left(\left|\sum_{l\in I_{n,j}} (W_l^{c_{2^n}} -W_l^{\bd_{2^n}}) \right|^4\right)\                                                                                                                                                                                           \ll \ |I_{n,j}| \cdot (c_{2^n})^2.
\end{equation}
Hence,
\begin{align*}
\sum_{j=0}^{F(n)-1}   \mu\left(\left|\sum_{l=0}^{|I_{n,j}|} \bW_l\right|\ge
   \eps c_{2^n} \right) \ll F(n) \cdot \frac{|I_{n,j}| \cdot (c_{2^n})^2}{(c_{2^n})^4}
\ll& \frac{F(n) \cdot |I_{n,1}|}{ (c_{2^n})^2}
\ll\frac{1}{n \ell_1(2^n)}
\end{align*}
which is summable in $n$, by (HL0)(iii). This completes the verification of \eqref{eq:nomaxd_n_c_n}.

To proceed, we exploit (HL2) with $r=i_{n,j}$ and $K=|I_{n,j}|$. Note that
$r$ and $K$ are fixed since  $n$ and $j$ are fixed. In concrete terms, for $k=1,\dots, K$, we apply (HL2) to
\[
\cS_k=\sum_{l=0}^{k} \bW_l;\qquad B_{k,\eps}=\{x\in M \,|\, |\cS_k|\ge \eps c_{2^n}\}
\]
and \(
N_{k,\eps}=B_{k,\eps}\setminus (\cup_{k'=0}^{k-1} B_{k',\eps}).
\)
By (HL2) \quad \(
\sum_{k=1}^K \mu(N_{k,\eps} \setminus B_{K,\eps/2})\ll \frac{K}{(c_{2^n})^2}.
\)

By the argument used in~\eqref{eq:tilde_w_second},
\(
\mu(B_{K,\eps/2}) \ll \frac{K}{(c_{2^n})^2}.
\)
Thus,
\[
\mu\left( \cup_{k=0}^K B_{k,\eps}
\right)\ll \frac{K}{(c_{2^n})^2}.
\]
The required summability for \eqref{eq:maxd_n_c_n} follows by the argument used in the proof of Lemma~\ref{lemma:d_n_c_n}.~\end{proof}

\begin{proofof}{Lemma~\ref{lemma:max}} By Lemma~\ref{lemma:maxd_n_c_n}, it remains to prove
\[
\sum_{n=1}^{\infty} \sum_{j=0}^{F(n)-1}  \mu\left(\max_{r<|I_{n,j}|}\left| \sum_{l=i_{n,j}}^{i_{n,j}+r} W_l^{\bd_{2^n}} \right|\ge
   \eps c_{2^n}\right)<\infty.
\]

Note that, by stationarity,
\[\mu\left(\max_{r<|I_{n,j}|}\left| \sum_{l=i_{n,j}}^{i_{n,j}+r} W_l^{\bd_{2^n}} \right|\ge
   \eps c_{2^n}\right)= \mu\left(\max_{r<|I_{n,j}|}\left| \sum_{l=0}^{r} W_l^{\bd_{2^n}} \right|\ge
   \eps c_{2^n}\right).\] To bound $\mu\left(\max_{r<|I_{n,j}|}\left| \sum_{l=0}^{r} W_l^{\bd_{2^n}} \right|\ge
   \eps c_{2^n}\right)$ we apply~\cite{merl}[Inequality (A.42)], which we now recall.

\textbf{\cite{merl}[Inequality (A.42)].} Let $X_1,\dots,X_n$ be random variables with $S_n=\sum_{i=1}^n X_i$ and $S_n^*=\max_{k=1,\dots,n} |S_k|$.
Introduce the sigma algebras $\mathcal F_u=\sigma(X_1,\ldots,X_u)$ if $u> 0$
    and $\mathcal F_u=\{\emptyset, \Omega\}$ if $u\le 0$, and let $\mathbb{E}_q$ denote conditional expectation with respect to $\mathcal{F}_q$.
    Let $q,M,\lambda$ be positive constants with $qM\le \lambda$, and let $\phi$ be a nondecreasing, even, nonnegative, convex function.
    Introduce furthermore the notations $X_i'=X_i\cdot \mathbf{1}_{|X_i|\le M}$ and $X_i''=X_i-X_i'$ for $i=1,\dots, n$. Then the following inequality
    can be derived from the Doob maximal inequality, see \cite{merl}[pp. 411--412]:

\begin{equation}\label{eq:MerlRioA42}
     \mathbb{P}(S_n^*\ge 5\lambda)\le \frac{\mathbb{E}(\phi(S_n))}{\phi(\lambda)} + \lambda^{-1} \sum_{i=1}^n \mathbb{E} |X_i''| +
     \lambda^{-1} \sum_{i=1}^n \|\mathbb{E}_{i-q} (X_i') \|_1.
     \end{equation}

We apply \eqref{eq:MerlRioA42}  to
   the random variables
   \begin{align*}
    &X_{l+1}=W_l^{\bd_{2^n}},\text{ which are  bounded by }M(n)=\bd_{2^n}, \text{ with }
    0\le l\le  |I_{n,j}|, \text{ hence}\\
    &X_l'=(X_l\wedge M)=X_l\text{ and thus } X_l''=X_l- X_l'=0.
    \end{align*}
  Also, $\mathcal F_u=\sigma(W_0^{\bd_{2^n}},\ldots,W_u^{\bd_{2^n}})$ if $u\le 0$
    and $\mathcal F_u=\{\emptyset, \Omega\}$ if $u<0$.
To proceed we exploit (HL3). We recall that $q(n)=\lfloor\delta_0\frac{c_{2^n}}{\bd_{2^n}}\rfloor$ for $\delta_0<1$,
and we take $\delta_0=\eps$.
    Set $\lambda(n)=\eps c_{2^n}$ and note that $q(n)M(n)\le \lambda(n)$.
    So, the prelimnary condition for applying the intended inequality is satisfied.
    Taking $\phi(x)=x^4$ (which is a convex, non-negative and even function),
    \eqref{eq:MerlRioA42} reads as

\begin{align}\label{ineqmr}
 \mu\left(\max_{r<|I_{n,j}|}\left| \sum_{l=0}^{r} W_l^{\bd_{2^n}} \right|\ge
   \eps c_{2^n}\right) &\le \frac{\int \left|\sum_{l\in I_{n,j}} W_{l}^{\bd_{2^n}}\right|^4\, d\mu}{(\eps c_{2^n})^4}
   +\frac{\sum_{l=0}^{|I_{n,j}|} \E_\mu(|X_l''|)}{\eps c_{2^n}}\\
  \nonumber&+
\frac{\sum_{l=0}^{|I_{n,j}|}\|\E_\mu(X_l|\mathcal F_{l-q(n)})\|_{L^1(\mu)}}
{\eps c_{2^n}}.
\end{align}
 Clearly, $\E_\mu(|X_l''|)=0$, so the middle term on RHS of~\eqref{ineqmr} vanishes.
For the third term on the RHS of~\eqref{ineqmr} we will use (HL3). Using the notation of \eqref{eq:sigma_algebras} and (HL3) we have
\begin{align*}
\|\E_\mu(X_l|\mathcal F_{l-q(n)})\|_{L^1(\mu)} &=\|\E_\mu(W_{l}^{\bd_{2^n}}\Big|\mathcal{F}_{0,l-q(n)}(W^{\bd_{2^n}}))\|_{L^1(\mu)}\\
&\ll \bd_{2^n} \gamma^{q(n)-C_3\log(\bd_{2^n})}  + (\bd_{2^n})^{-10}.
\end{align*}

Recalling $q(n)=\lfloor\eps\frac{c_{2^n}}{\bd_{2^n}}\rfloor$
\begin{align*}
 \frac{\sum_{l=0}^{|I_{n,j}|}\|\E_\mu(X_l|\mathcal F_{l-q(n)})\|_{L^1(\mu)}}
{\eps c_{2^n}}\ll \frac{|I_{n,j}|}{c_{2^n}} \cdot\left(\bd_{2^n}\gamma ^{q(n)-C_3\log(\bd_{2^n})}+ \bd_{2^n}^{-10} \right)
\end{align*}
for $\gamma<1$. Recalling $\bd_{2^n}$ as in~\eqref{eq:dn}, we have $\frac{c_{2^n}}{\bd_{2^n}}\gg \frac{2^{n/2}}{2^{n(1-\varsigma)/2}}=2^{n\varsigma/2}$. In particular,
$q(n)\gg 2^{n\varsigma/2}$. So,
\begin{align}\label{3t}
 \frac{\sum_{l=0}^{|I_{n,j}|}\|\E_\mu(X_l|\mathcal F_{l-q(n)})\|_{L^1(\mu)}}
{\eps c_{2^n}}\ll |I_{n,j}| \cdot\left( 2^{-n\varsigma/2} \gamma ^{2^{n\varsigma/2}} + 2^{-4n} \right) \ll |I_{n,j}|\cdot 2^{-4n}.
\end{align}

Regarding the first term in~\eqref{ineqmr}, we just use (H3)(ii).
 By (H3)(ii) with $R=\bd_{2^n}$ and $n$ replaced by $|I_{n,j}|$ (note that $|I_{n,j}|\asymp 2^{(1-\beta)n}=
o((\bd_{2^n})^{2-r_1})$ for some $r_1>0$, if $\varsigma$ is sufficiently small),
\[
\int \left|\sum_{l=0}^{|I_{n,j}|} W_{l}^{\bd_{2^n}}\right|^4\, d\mu \, \ll \, |I_{n,j}| (\bd_{2^n})^2\ll \, |I_{n,j}| (c_{2^n})^2.
\]
So,
\begin{align}\label{t2}
 \frac{\int \left|\sum_{l\in I_{n,j}} W_{l}^{\bd_{2^n}}\right|^4\, d\mu}{(\eps c_{2^n})^4}\ll \frac{|I_{n,j}|}{(c_{2^n})^2}.
\end{align}
Using~\eqref{3t},~\eqref{t2} and~\eqref{ineqmr},
\begin{align*}
 \mu\left(\max_{r<|I_{n,j}|}\left| \sum_{l=0}^{r} W_l^{\bd_{2^n}} \right|\ge
   \eps c_{2^n}\right)\ll |I_{n,j}|\cdot 2^{-4n}
   +\frac{|I_{n,j}|}{(c_{2^n})^2}.
\end{align*}
Summing on $j=0,\dots, F(n)-1$ and using that
$\sum_{j=0}^{F(n)-1}|I_{n,j}|\asymp 2^n$, we obtain
\[
 \sum_{j=0}^{F(n)-1} \mu\left(\max_{r<|I_{n,j}|}\left|\sum_{l=i_{n,j}}^{i_{n,j}+r}W_l^{\bd_{2^n}}\right|\ge \eps c_{2^n}\right)
\ll 2^{-3n} + \frac{2^n }{2^n n \ell_1(2^n)},
 \]
which is summable in $n$, as required.

\end{proofof}

\subsection{Proof of Lemma~\ref{lemma:gaps}}
\label{sec:gaps}

Since we are looking at the truncated process for which the second moment is finite,
 it makes sense to apply the Gal-Koksma strong law of large numbers~\cite[Proposition 3.7]{Gou-asip}. As such,
we can proceed as~\cite[Section 5.4]{Gou-asip}.
We recall the main steps in our set up, which are slightly
different because the effects of the truncation have to be dealt with.

Our key Lemma, analogous to~\cite[Proof of Lemma 5.9]{Gou-asip}, is
\begin{lemma}\label{lemma:ggaps}
For any $\eps>0$, there exists $C>0$ so that for any interval $J\subset\N$,
\[
 \E_\mu\left(\sum_{l\in J\cap\mathcal J} W_l^{c_{\hl}}\right)^2\le C\, |J\cap\mathcal J|^{1+\eps}.
\]
\end{lemma}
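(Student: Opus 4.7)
The approach mirrors~\cite[Proof of Lemma 5.9]{Gou-asip}, adapted to handle the mild unboundedness of the truncated variables' second moment. I would begin by expanding
\[
\E_\mu\left(\sum_{l\in J\cap\mathcal J}W_l^{c_{\hl}}\right)^2=\sum_{l\in J\cap\mathcal J}\E_\mu\bigl((W_l^{c_{\hl}})^2\bigr)+2\sum_{\substack{l<l'\\ l,l'\in J\cap\mathcal J}}\E_\mu\bigl(W_l^{c_{\hl}}\,W_{l'}^{c_{\underline{l'}}}\bigr).
\]

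For the diagonal, regular variation of $v$ with index $-2$ yields, by a standard truncation computation, $\E_\mu\bigl((W_l^{c_{\hl}})^2\bigr)=\E_\mu(v^2\mathbf{1}_{|v|\le c_{\hl}})\ll\ell^*(c_{\hl})\ll L(\hl)$. The gap structure from~\eqref{gaps} shows that whenever $l$ lies in a gap $J_{n,j}$ fully contained in $J$ one has $|J\cap\mathcal J|\ge|J_{n,j}|\ge 2^{\lfloor\eps_1 n\rfloor}$, hence $L(\hl)=n\ll\eps_1^{-1}L(|J\cap\mathcal J|)$. Since $J$ is a single interval, at most two ``boundary'' gaps can fail this full-containment property, and their contribution is bounded by a constant. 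Summing, the diagonal contributes $\ll|J\cap\mathcal J|\,L(|J\cap\mathcal J|)\ll_\eps|J\cap\mathcal J|^{1+\eps}$.

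For the off-diagonal, I would apply (H2) with $n=m=1$, block lengths one, lag $k=l'-l$, and (possibly distinct) truncation levels $c_{\hl}$ and $c_{\underline{l'}}$. Because $|W_l^{c_{\hl}}|\le c_{\hl}$, differentiating the resulting characteristic-function inequality twice at the origin --- or equivalently invoking the spectral representation from Lemma~\ref{lemma:wcontin} --- produces the decorrelation bound
\[
\left|\E_\mu\bigl(W_l^{c_{\hl}}W_{l'}^{c_{\underline{l'}}}\bigr)-\E_\mu\bigl(W_l^{c_{\hl}}\bigr)\E_\mu\bigl(W_{l'}^{c_{\underline{l'}}}\bigr)\right|\ll c_{\hl}\,c_{\underline{l'}}\,\delta_0^{\,l'-l}
\]
for some $\delta_0\in(0,1)$. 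The product-of-means term is negligible because the mean-zero assumption gives $|\E_\mu(W_l^{c_{\hl}})|=|\E_\mu(v\mathbf{1}_{|v|>c_{\hl}})|\ll c_{\hl}^{-1}\ell(c_{\hl})$, and is readily summable. To handle the leading covariance double sum I would split pairs according to whether $l'-l\le M$ or $l'-l>M$ with $M=K\log(\max l)$ for $K$ large: close pairs contribute $\ll M\,|J\cap\mathcal J|\,L(\max l)$ via Cauchy--Schwarz together with the diagonal estimate, while for far pairs the exponential $\delta_0^M$ dominates the polynomial $c_{\hl}c_{\underline{l'}}\ll(\max l)^{1+o(1)}$. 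Both contributions are absorbed into $|J\cap\mathcal J|^{1+\eps}$ via the structural estimate $L(\max l)\ll L(|J\cap\mathcal J|)$ from the previous paragraph.

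The principal obstacle is extracting the pointwise covariance bound above from (H2), which is stated only at the level of characteristic functions. I plan to address this either (i) by differentiating under the expectation in the frequencies $t_1,t_2$ at $0$, legitimate since the truncated variables are bounded, losing only a polynomial factor in the truncation levels readily dominated by the exponential $e^{-ck}$ of (H2); or (ii) directly via the transfer-operator representation of Definition~\ref{def:code}, using the uniform spectral gap from Lemma~\ref{lemma:wcontin} together with a bound $\|W_l^{c_{\hl}}u_0\|_{\cB}\ll c_{\hl}$ on the cost of multiplying an element of $\cB$ by a truncated observable.
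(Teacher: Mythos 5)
Your decomposition into diagonal and off-diagonal terms, and the treatment of the diagonal via $\E_\mu((W_l^{c_{\hl}})^2)\ll \ell^*(c_{\hl})\ll L(\hl)$ combined with the gap structure, is reasonable and close in spirit to how the paper controls second moments of single blocks (via (H3)(iii) at a common truncation level). The genuine gap is in the off-diagonal part: the pointwise covariance bound $|\E_\mu(W_l^{c_{\hl}}W_{l'}^{c_{\underline{l'}}})-\E_\mu(W_l^{c_{\hl}})\E_\mu(W_{l'}^{c_{\underline{l'}}})|\ll c_{\hl}\,c_{\underline{l'}}\,\delta_0^{l'-l}$ is not a consequence of (H2), and neither of your proposed fixes closes this. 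Fix (i) amounts to differentiating an inequality: (H2) only says that two characteristic functions are within $Ce^{-ck}$ of each other on the \emph{real} ball $|t|\le\eps_0$, and two smooth (even entire) functions can be uniformly close on a real neighbourhood of $0$ while their second derivatives at $0$ are far apart. Boundedness of $W^{c_{\hl}}$ gives exponential type $\asymp c_{\hl}$, so a Cauchy estimate would require control of the difference on a complex polydisk, which (H2) does not supply; a two-constants interpolation might salvage something, but that is a substantial argument you have not given and it degrades the exponential rate. Fix (ii) steps outside the hypotheses: Lemma~\ref{lemma:ggaps} is part of the proof of the abstract Theorem~\ref{thm-main}, where no transfer-operator coding is assumed, and even in the billiard application multiplication by $W^{c_{\hl}}$ is not known to be a bounded operation on the anisotropic spaces $\cB$ --- this is precisely why the whole framework (here and in Gou\"ezel's paper) is phrased at the level of characteristic functions rather than correlation functions.

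The paper avoids covariance decay altogether. It splits $J\cap\mathcal J$ into the first and last gap intervals $J_0,J_2$ plus the remainder $J_1$, bounds the $J_0,J_2$ contributions by $|J_i|\cdot N$ using (H3)(iii), and for $J_1$ groups the gaps by rank and proves the key inequality \eqref{eq520}: $\E_\mu(\sum_{s\in S}T_s)^2\ll(\sum_{s\in S}\E_\mu(T_s^2)+|S|)\cdot N$. That inequality is obtained from (H2) via the Strassen--Dudley coupling of Theorem~\ref{thm-StrD}: closeness of characteristic functions of the block vector and of an independent copy gives closeness in the Prokhorov metric, hence an $L^2$-comparison with a sum of genuinely independent variables, whose second moment is additive. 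If you wish to keep your expansion of the square, you would need to import this coupling step (or an analogue of Gou\"ezel's Proposition 4.1) in place of your covariance estimate; as written, the far off-diagonal pairs are not controlled.
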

This allows to complete in the same manner as in~\cite[Section 5.4]{Gou-asip},

\begin{pfof}{~Lemma~\ref{lemma:gaps}} The Gal-Koksma strong law of large numbers~\cite[Propostion 3.7]{Gou-asip} says that if $(X_l)_{l\ge 0}$ are centered
random variables and if
$\E\left|\sum_{l=m}^{m+n-1} X_l\right|^2 \le C n^r$ for some $r\ge 1$ and some $C>0$, then for any $\eps_2>0$, $\frac{\sum_{l=1}^N X_l }{N^{r/2+\eps_2}}\to 0$, almost surely, as $N\to\infty$.

We can apply Gal-Koksma to $W_l^{c_{\hl}}$; this is possible because by assumption (H3)(i), $W_l^{c_{\hl}}$ are centered.

By  Lemma~\ref{lemma:ggaps} for any $\eps>0$,
\(
 \E_\mu\left(\sum_{l\in J\cap\mathcal J}W_l^{c_{\hl}}\right)^2\le C |J\cap\mathcal J|^{1+\eps}.
\)
Taking $k\in [2^n, 2^{n+1})$ and recalling~\eqref{gaps},
\[
 |J\cap\mathcal J|\le \sum_{m=1}^n\sum_{j=0}^{F(m)-1}|J_{m,j}|\ll \sum_{m=1}^n m 2^{\eps_1 m+\beta m}\ll n 2^{\eps_1 n+\beta n}\ll k^{\beta+3\eps_1/2}.
\]
It follows from Gal-Koksma that
\(
 \sum_{l<k, l\in \mathcal J} W_l^{c_{\hl}}=
 o\left(k^{\beta/2+2\eps+\eps_1}\right),
\)
as desired.~\end{pfof}

It remains to provide

\begin{pfof}{~Lemma~\ref{lemma:ggaps}}
 As already mentioned, this proof is analogous to~\cite[Proof of Lemma 5.9]{Gou-asip}. The  only aspect which is different in our set up is that for estimating second moments of (truncated) sums we need to use (H3)(iii) instead of
 ~\cite[Proposition 4.1]{Gou-asip} and the current (H2) (instead of \cite[Assumption (H)]{Gou-asip}).

 To start, let us recall that $L(\cdot)=\log(\max(\cdot,e))$ and introduce the notation
\begin{equation}
\label{eq:whatisN}
N=\max_{l\in J} L(l).
\end{equation}
Via (H3), second moment moment bounds of sums of truncated variables will include factors of the type  $\ell^*(c_l)$ for various $l\in J$. All these contributions can be estimated
using
\[
\ell^*(c_{2^n})\asymp L(2^n) \ll N.
\]

Now use, as in the second displayed equation inside~\cite[Proof of Lemma 5.9]{Gou-asip}, the convexity inequality
 \begin{align*}
  \left(\sum_{l\in J\cap\mathcal J} W_l^{c_{\hl}}\right)^2
  \le 3\left(\sum_{l\in J_0} W_l^{c_{\hl}}\right)^2+3 \left(\sum_{l\in J_1} W_l^{c_{\hl}}\right)^2 +3\left(\sum_{l\in J_0} W_l^{c_{\hl}}\right)^2,
 \end{align*}
where $J_0, J_2$ are the first and last intervals in $J\cap\mathcal J$
and where $J_1$ is the remaining part.

Note that $ \E_\mu\left(\sum_{l\in J_0} W_l^{c_{\hl}}\right)^2\ll  \E_\mu\left(\sum_{l\in J_0} W_l^{\max_{l\in J_0}c_l}\right)^2$ and $ \E_\mu\left(\sum_{l\in J_2} W_l^{c_{\hl}}\right)^2\ll  \E_\mu\left(\sum_{l\in J_2} W_l^{\max_{l\in J_2}c_l}\right)^2$. This together with (H3) gives
\begin{align*}
  \E_\mu\left(\sum_{l\in J_0} W_l^{c_{\hl}}\right)^2\ll |J_0|\cdot\ell^*(\max_{l\in J_0}c_l),\qquad
  \E_\mu\left(\sum_{l\in J_2} W_l^{c_{\hl}}\right)^2\ll |J_2|\cdot\ell^*(\max_{l\in J_2}c_l).
\end{align*}
Under (HL0), for all $m$ large enough, $\ell^*(m)\approx L(m)$. Recalling the assumption on $c_n$ in (HL0),
\begin{align}\label{eq:j0j2}
 \E_\mu\left(\sum_{l\in J_0} W_l^{c_{\hl}}\right)^2\ll |J_0|\cdot N,\qquad
  \E_\mu\left(\sum_{l\in J_2} W_l^{c_{\hl}}\right)^2\ll |J_2|\cdot N.\end{align}

We claim that for any $\eps>0$,
\begin{align}\label{eq:j1}
  \E_\mu\left(\sum_{l\in J_1} W_l^{c_{\hl}}\right)^2\ll |J_1|^{1+\eps}.
\end{align}
This together with~\eqref{eq:j0j2} gives the conclusion.

If $J_1$ is empty, there is nothing to do. If $J_1$ is not empty, the proof of the claim~\eqref{eq:j1} follows the argument
of~\cite[Proof of Equation (5.18)]{Gou-asip} with again, the only difference that we
have to use
(H3) instead of~\cite[Proposition 4.1]{Gou-asip} and we need the current (H2).

Recall that as before~\eqref{gaps}, an interval has rank $r$ if its length is $2^{\lfloor \eps_1 n\rfloor +r}$.
Let $\mathcal J^{(n,r)}$ denote the union of intervals $J_{n,j}$ of rank $0\le r\le\rfloor n\beta\rfloor$.
Recall, furthermore, \eqref{eq:whatisN}. The number of sets $\mathcal J^{(n,r)}$ intersecting $J_1$ is at most $\sum_{n=0}^N (\rfloor n\beta\rfloor+1)\ll N^2$.
Proceeding as in the argument used in obtaining~\cite[Equation (5.19)]{Gou-asip}, we can rely on the convexity inequality and get
\begin{align}\label{eq:j11}
 \left(\sum_{l\in J_1} W_l^{c_{\hl}}\right)^2\ll  N^2\sum_{n,r}\left(\sum_{l\in J_1\cap \mathcal J^{(n,r)}} W_l^{c_{\hl}}\right)^2.
\end{align}

 Let us fix some relevant $(n,r)$ and enumerate the intervals in $\mathcal J^{(n,r)}$ as $K_1,\ldots, K_t$
 for $t=2^{\rfloor n\beta\rfloor -1-r}$, $0\le r\le\rfloor n\beta\rfloor$.
 Let $T_s=\sum_{l\in K_s} W_l^{c_{\hl}}$.
To complete the proof of Lemma~\ref{lemma:ggaps}, the key bound, analogous to~\cite[Equation (5.20)]{Gou-asip}, is that for any subset $S\subset\{1,\dots,t\}$
\begin{align}\label{eq520}
  \E_\mu\left(\sum_{s\in S} T_s\right)^2\ll \left( \sum_{s\in S}\E_\mu(T_s^2)+|S|\right)\cdot N.
 \end{align}
 \eqref{eq520} can be proved using (H2) and (H3), see below, but let us first see how to obtain \eqref{eq:j1} from \eqref{eq520}.
By (H3), $\E_\mu(T_s^2)\ll \left(\sum_{l\in K_s} W_l^{\max_{l\in K_s}c_l}\right)^2\ll
|K_s|\cdot N$.
This type of bound holds for any set $K$ which is a union of intervals in $\mathcal J^{(n,r)}$.
Taking $K=J_1\cap \mathcal J^{(n,r)}$ and combining with~\eqref{eq:j11},
\begin{align*}
 \left(\sum_{l\in J_1} W_l^{c_{\hl}}\right)^2\ll  N^2\sum_{n,r} |J_1\cap \mathcal J^{(n,r)}|\cdot N
\end{align*}
and the claim~\eqref{eq:j1} follows as $N\ll |J_1]^{\eps}$ for any $\eps>0$, whenever $J_1$ is nonempty (note that if $J_1$ is nonempty, it has to contain at least one interval of length $\asymp 2^{\beta N}$).

It remains to justify~\eqref{eq520} proceeding as in the proof of~\cite[Equation (5.20)]{Gou-asip}.
Let $(U_1,\ldots, U_t)$ be independent random variables such that $U_s$ is distributed like $T_s$. We recall that given $\eps_0$ as in (H2),~\cite[Proposition 3.8]{Gou-asip} ensures that there exists
 a symmetric random variable $V$ in $\R^d$, $V\in L^q$, $q\ge 2$, whose characteristic function is supported in $\{|t|\le\eps_0\}$. Let $V_1,\ldots, V_t, V_1',\ldots, V_t'$ be independent random variables distributed like $V$ and define $\tilde T_s= T_s+V_s$,
 $\tilde U_s=U_s+V_s'$. Using intervals of rank greater than $k$ as gaps and using the procedure recalled in justifying~\eqref{eq:g3} above we obtain (in the same manner
 as in the proof of~\cite[Equation (5.21)]{Gou-asip}),
 \begin{align*}
  \pi\left((\tilde T_1,\ldots,\tilde T_t),(\tilde U_1,\ldots,\tilde U_t)\right)\ll e^{2^{\delta n}},
 \end{align*}
for some $\delta>0$. Now we may proceed as in \cite{Gou-asip}: by Theorem~\ref{thm-StrD} there exists a coupling between the $(\tilde T_j)$ and the $(\tilde U_j)$ $(j=1,\dots, t)$ such that,
outside a set $O$ of measure $\ll e^{-2^{\delta n}}$, we have $|\tilde T_j-\tilde U_j|\ll e^{-2^{\delta n}}$ for any  $j=1,\dots, t$. Then, for any subset $S\subset\{1,\dots,t\}$ we have
\begin{equation} \label{eq:tildeTs}
\left\| \sum_{s\in S} \tilde{T}_s \right\|_{L^2} \le \left\| \mathbf{1}_O \cdot \sum_{s\in S} \tilde{T}_s \right\|_{L^2} +
\left\| \mathbf{1}_{O^c} \cdot \sum_{s\in S} (\tilde{T}_s-\tilde{U}_s) \right\|_{L^2}
+ \left\| \sum_{s\in S} \tilde{U}_s \right\|_{L^2}.
\end{equation}

To estimate the first term in \eqref{eq:tildeTs}, \cite{Gou-asip} uses an $L^p$ bound for a relevant $p>2$, here we proceed slightly differently; we rely on (H3i) and get
\begin{align*}
\left\| \mathbf{1}_O \cdot \sum_{s\in S} \tilde{T}_s \right\|_{L^2} &\le    \left\| \mathbf{1}_O \right\|_{L^4} \cdot \left\| \sum_{s\in S} \tilde{T}_s \right\|_{L^4} \\
&\ll e^{-2^{\delta n}/4} \cdot \sum_{s\in S} \left\| \tilde{T}_s \right\|_{L^4} \ll e^{-2^{\delta n}/4} \cdot |S| \cdot 2^{n/4} \cdot c_{2^n}^{1/2} \\
&\ll  e^{-2^{\delta n}/4} \cdot  2^{2n} \le C
\end{align*}
 for some $C>0$. From this point on, the argument can be completed literally as in \cite{Gou-asip}; the second term in \eqref{eq:tildeTs} can be bounded by some $C>0$, and thus
 \begin{align*}
 \left\| \sum_{s\in S} T_s \right\|_{L^2} &\le \left\| \sum_{s\in S} \tilde{T}_s \right\|_{L^2} + \left\| \sum_{s\in S} V_s \right\|_{L^2}\\
 &\le C + \left\| \sum_{s\in S} U_s \right\|_{L^2} +\left\| \sum_{s\in S} V_s \right\|_{L^2} + \left\| \sum_{s\in S} V'_s \right\|_{L^2}\\
 &\le C + (\sum_{s\in S} \mathbb{E}_{\mu} (T_s^2))^{1/2} +C|S|^{1/2}.
 \end{align*}
\end{pfof}

\section{Verification of assumptions (H3)(ii), (HL1), (HL2) and (HL3) for the Lorentz gas}
\label{sec:fm}

\subsection{Verification of (H3)(ii): bound on the fourth moment}

Let $\cS=\sum_{i=0}^{n-1} W^R\circ T^i$, that is, the number of terms is $n$ and the truncation level is $R$. Throughout, we assume $n=o(R^{2-r_1})$ and $R=O(n^{r_2})$, where $r_1>0$ and $r_2>0$ are some
fixed exponents - this implies also $\log R \asymp \log n$.
Our aim is to bound $\E (\cS^4)$. For brevity let $Y_i=W^R\circ T^i$ and $Y=Y_0$. We also assume $\E Y=0$ which holds in the Lorentz gas by symmetry.
We have
\[
\E \cS^4 \ll \sum_{0\le i\le j\le k \le l\le n-1 } \E(Y_i Y_j Y_k Y_l)
\]
We distinguish several cases depending on the indices $i,j,k,l$. For some large constant $C$, let
\begin{description}
\item Case 1. Either $|i-j|\ge C\log n$ or $|k-l|\ge C\log n$. By decay of correlations (see eg.~\cite[Lemma 4.1]{BCDcusp}) the contribution of such terms is
\[\ll n^4 \cdot R^4 \theta^{C\log n}= O(1)\]
here $n^4$ is the number of terms and for $C$ sufficiently large the decay of correlations can be made $\theta^{C\log n}\ll n^{-10r_2+10}$ so that it kills all polynomially growing factors.
\item Case 2. Both $|i-j|\le C\log n$ and $|k-l|\le C\log n$ but $|j-k|\ge C\log n$, we may again apply decay of correlations but now the expectations $\E(Y_i Y_j)$ (and $\E(Y_k Y_l)$) are nonzero.
Using the Cauchy-Schwartz inequality, any such term contributes by $(\E (Y^2))^2\ll (\log R)^2$ (correlations give an $O(1)$ correction) hence the total contribution is
\[
n^2 (\log R)^2 (\log n)^2 \ll n^2 (\log R)^4\ll n R^{2-r_1} (\log R)^4\ll n R^2.
\]

\item Case 3. $|i-j|\le C\log n$, $|k-l|\le C\log n$ and $|j-k|\le C\log n$.
As further bounds will rely on it, first let us estimate $\E(Y_i^2 Y_j^2)=\E(Y^2 Y_{j-i}^2)$ . We again distinguish two cases.
\begin{description}
\item Case a, $i=j$. We only have $\E Y^4\ll \sum_{m=1}^R m^4 m^{-3} \ll R^2$.
\item Case b, $j>i$. Let
\begin{equation}\label{eq:Mmdef}
M_m=\{\omega\in M | \lfloor |v(\omega)| \rfloor=m\}
\end{equation}
for some $m\ge 1$. Then $\mu(M_m)\ll m^{-3}$ and (see \cite[Formula (9.6)]{ChDo09})
\[
\mu(M_m \cap T^{-(j-i)} M_{m'})\ll m^{-9/4} (m')^{-2}.
\]
This gives
\[
\E(Y^2 Y_{j-i}^2)\ll \sum_{m=1}^R \sum_{m'=1}^R m^2 (m')^2 m^{-9/4} (m')^{-2}\ll R^{3/4} R  =R^{7/4}.
\]
\end{description}

Now let us get back to the investigation of Case 3 above. Let us consider first the terms when $i=j$ and $k=l$. If, furthermore, $j=k$ then we get a single term when all indices coincide contributing with
\[
\E (Y^4) \ll R^2,
\]
see above. Otherwise, if $i=j<k=l$, (yet $k-j<C\log n$) we may apply Case b above to obtain
\[
\E (Y_j^2Y_k^2) \ll R^{7/4}
\]
and as there are $\ll n \log n$ such terms (first fix $(i=)j$, then can pick $k(=l)$ in $\ll \log n$ ways) their total contribution is
\[
\ll R^{7/4} n \log n \ll n R^2.
\]
Now if either $j>i$ or $l>k$ (yet the conditions of Case 3 remain valid), first we apply a Cauchy-Schwartz to such terms and get
\[
\ll (\E(Y_i^2 Y_j^2))^{1/2} \cdot (\E(Y_k^2 Y_l^2))^{1/2}.
\]
As either $j>i$ or $l>k$, to at least one of the factors Case b applies and we get $\ll R^{7/8} R= R^{15/8}$. the total contribution of such terms is
\[
n R^{15/8} (\log R)^3 \ll n R^2.
\]
\end{description}

\begin{rmk}
\label{rmk:whysmallmnoproblem}
We note that the condition $R=o(n^{r_2})$ is for mere simplicity exposition, to ensure $\log n \asymp \log R$. If the number of terms $n$ is less, so that $R=o(n^{r_2})$ does not apply,
we could still get the same type of bound with splitting the cases on conditions of indices in terms of $C \log R$ instead of $C\log n$.
\end{rmk}

\subsection{Verification of (HL1)}

Let us recall the notations: $\tW= v\cdot \mathbf{1}_{d_{2^n}\le|v|\le c_{2^n}}$,  $\tW_{l}=\tW\circ T^{l+r}$ for $l\ge 1$ and $r$ fixed, while, for some
$N\le (c_{2^n})^2$, $\tcS_N=\sum_{l=0}^{N} \tW_l$.

By exponential decay of correlations, see \cite[Proposition 9.1]{ChDo09}, the contribution of cross terms is of $O(N)$, and thus (see also \cite[Proof of Lemma 11.1]{ChDo09})
\begin{align}
\label{eq;secmtw}
 \E_\mu\left((\tcS_N)^2\right)\ll  N \cdot L\left( \frac{c_{2^n}}{d_{2^n}}\right)\ll  N\cdot LL(n),
\end{align}
as required.

\subsection{Verification of (HL2)}
This time we work with the lower truncation level $\bd_{2^n}$ from \eqref{eq:dn}, and the notations are $\bW= v\cdot \mathbf{1}_{\bd_{2^n}\le|v|\le c_{2^n}}$,  $\bW_{l}=\bW\circ T^{l+r}$ for $l\ge 1$ and $r$ fixed,
$K$ is fixed so that $K< (\bd_{2^n})^2$. Furthermore, for $k=1,\dots, K$ and $\eps>0$,
\[
\cS_k=\sum_{l=0}^{k} \bW_l;\qquad B_{k,\eps}=\{x\in M \,:\, |\cS_k|\ge \eps c_{2^n}\}
\]
and
\(
N_{k,\eps}=B_{k,\eps}\setminus (\cup_{k'=0}^{k-1} B_{k',\eps}).
\)

We need to show that
 \(
\sum_{k=1}^K \mu(N_{k,\eps} \setminus B_{K,\eps/2})\le C_2 \frac{K}{(c_{2^n})^2}.
\)
A similar type of bound is obtained in \cite[pp.696--697]{ChDo09} and \cite[pp.~506--508]{BCDcusp}, our derivation is analogous, nevertheless, we use the fourth moment bound of (H3)(ii)
instead of the second moment bound. To start,
note that the points $x\in N_{k,\eps} \setminus B_{K,\eps/2}$ simultaneously satisfy three conditions:
\begin{equation}\label{eq:max3cond}
|\cS_k|\ge \eps \cdot c_{2^n};\qquad |\cS_K-\cS_k|\ge \frac{\eps}{2} \cdot c_{2^n}; \qquad  \bW_k\ne 0.
\end{equation}
Let us consider the third condition: $\bW(x) \ne 0$ implies that $x\in M_m$ for some $m$ with $\bd_{2^n}\le  m \le c_{2^n}$
(recall \eqref{eq:Mmdef} for the definition of $M_m$). Now we may use \cite[Lemma 5.1]{ChZh08} (see also \cite[Lemma 16]{SV07}):
there exist constants $p,q>0$ such that for any $C>0$ there exists a subset $\tM_m\subset M_m$ such that
\begin{align*}
&\mu(M_m\setminus \tM_m) \ll m^{-p} \mu(M_m); \quad \text{and} \\
& \quad x\in \tM_m \ \Rightarrow\ T^i x\not\in M_r \ \text{for} \ r>m^{1-q} \ \text{whenever}\  i=1,\dots C \cdot L(m).
\end{align*}
Since
\[
\mu\left( \cup_{m\ge \bd_{2^n}} (M_m\setminus \tM_m)\right) \ll \sum_{m\ge \bd_{2^n}} m^{-(3+p)} \ll (\bd_{2^n})^{-(2+p)}=o(c_{2^n}^{-2}),
\]
we may assume that whenever $\bW_k\ne 0$, we have $T^{(k+i_{n,j})}x\in  \tM_m$ for some $m$ with $\bd_{2^n}\le  m \le c_{2^n}$. However, for $\varsigma$ sufficiently small, this implies
that $\bW_{k'}= 0$ for $k'=(k+1),\dots, k+ C n$ (recall $L(\bd_{2^n}) \asymp n$). In terms
of the three conditions of \eqref{eq:max3cond} this implies that  the contribution of $\mu(N_{k,\eps} \setminus B_{K,\eps/2})$ can be disregarded unless $K-k> C n$.
Furthermore, the values $\bW_k\ne 0$ and $\bW_{k'}\ne 0$  for $k'>k+Cn$ are strongly decorrelated (by exponential decay of correlations, see eg.~\cite[Lemma 4.1]{BCDcusp}) and are of order
$\gamma^n$ for some $\gamma<1$ that can be made arbitrary small by choosing $C$ large.
To conclude, let
\[
F_k= |\cS_k| \cdot \mathbf{1}_{N_{k,\eps}};\qquad G_k= |\cS_K-\cS_k|.
\]
Then, using (H3)(ii),
\begin{align*}
\mu(N_{k,\eps} \setminus B_{K,\eps/2}) \le & \mu\left( F_k\ge \eps c_{2^n} \ \text{and} \ G_k\ge \frac{\eps}{2} c_{2^n}\right) \\
\le & \, \mu\left( F_k\cdot G_k \ge \frac{\eps^2}{2}  (c_{2^n})^2 \right) \\
\ll & \, \frac{\mu(F_k^4 G_k^4)}{c_{2^n}^{8}} \ll \left[ \mu(F_k^4) \mu(G_k^4) +\gamma^{n} \right] / c_{2^n}^8
\end{align*}
and the contribution from $\gamma^{n}$ is under control. As in the proof of \eqref{eq:nomaxd_n_c_n}, we may apply the fourth moment bound (H3)(ii) (cf.~also Remark~\ref{rmk:whysmallmnoproblem}) to obtain
\[
\mu(G_k^4) \ll (K-k)\cdot (c_{2^n})^2\ll K \cdot (c_{2^n})^2.
\]
To bound $\mu(F_k^4)$, note that for $x\in N_{k,\eps}$
\[
|\cS_k(x)|\le |\cS_{k-1}(x)|+ |\bW_k(x)| \ll \eps c_{2^n} + c_{2^n} \ll c_{2^n}
\]
thus
\[
\mu(F_k^4)\ll (c_{2^n})^4 \cdot \mu(N_{k,\eps}) \ll \frac{(c_{2^n})^4}{ (\bd_{2^n})^{2}}.
\]
Putting all these estimates together and recalling that $K\le \bd_{2^n}^2$ by assumption,
\[
\sum_{k=1}^K \mu(N_{k,\eps} \setminus B_{K,\eps/2}) \ll K \cdot \frac{K \cdot (c_{2^n})^6}{(c_{2^n})^8 (\bd_{2^n})^2} \ll \frac{K}{(c_{2^n})^2},
\]
as required.

\subsection{Verification of (HL3) \label{ss:HL3verify}}

Recall the notation of \eqref{eq:sigma_algebras}. We will prove a statement slightly more general than (HL3):  there exist some $\gamma<1$ and $C>0$ and $C_3>0$ such that, for any $R\ge 0$ (truncation level) and $q\ge 1$ (correlation gap) satisfying $q\ge \log (R)$, and any $k\ge 0$ we have
\begin{equation}\label{eq:HL3i}
 \left\|\E_\mu\left(W_{k+q}^{R}\Big|\cF_{0,k}(W^{R})\right)\right\|_{L^1(\mu)}\le C (R \gamma^{q-C_3\log(R)} + R^{-10}).
\end{equation}

Our argument will use some tools from the theory of hyperbolic billiards: unstable curves, unstable manifolds, standard families, Z functions, the growth lemma, and the phenomena of equidistribution.

Unstable \textit{curves} are curves with tangents in the unstable cone. Through $\mu$-almost every point $x\in M$ there exists a unique, special unstable curve, $\eta^u(x)(=\eta^u)$,
the \textit{unstable manifold} of $x$, characterized by the property that for $y\in \eta^u(x)$ we have $d(T^{-n}x,T^{-n}y)\to 0$ exponentially as $n\to\infty$. We will denote by $\mathcal{U}$
the sigma-algebra generated by unstable manifolds.

Standard families $\mathcal{G}$ are appropriate probability measures on $M$ that can be
desintegrated into standard pairs, that is, probability measures smoothly supported on unstable curves.
The Z function of a standard family, denoted as $\mathcal{Z}_{\mathcal{G}}$, is a quantity expressing
the average inverse length of the unstable curves included in this family. The Growth Lemma describes the evolution of the Z function under the dynamics: there exist $C_1>0, C_2>0$ and
$\theta<1$ such that, for any standard family $\mathcal G$ we have
\begin{equation}
\label{eq:growth}
\mathcal{Z}_{T^n_*\mathcal{G}} \le C_1 \theta^n \cdot \mathcal{Z}_{\mathcal{G}}+C_2,
\end{equation}
where $T^n_*\mathcal{G}$ is the standard family obtained by pushing forward $\mathcal{G}$ by $T^n$.
A standard family is proper if its Z function is less than some uniform $C>0$. In uniformly hyperbolic billiards, such as the Lorentz gas map,
proper standard families equidistribute at an exponential rate: there exists some $\gamma<1$ such that the $T^n$-push-forward of a proper standard family is $\gamma^n$-close to the invariant
Liouville measure $\mu$.
We refer to \cite[Chapter 7]{ChM}, \cite[Section 6]{ChDo09} and \cite[Section C.1]{BBT} for further details.

Now, let us recall that we want to prove \eqref{eq:HL3i}. As the billiard map is an invertible dynamical system, we have a bi-infinite stationary sequence of random variables
$\dots,W_{-1}^R,W_0^R, W_1^R,\dots$ and by stationarity, the LHS of \eqref{eq:HL3i} can be rewritten as
\[
 \left\|\E_\mu\left(W_q^{R}\Big|\cF_{-k,0}(W^{R})\right)\right\|_{L^1(\mu)}.
\]
Recall that $\mathcal{U}$ is the sigma algebra generated by unstable manifolds. Note that whenever $y\in\eta^u(x)$, we have $\kappa(T^{-k}x)=\kappa(T^{-k}y)$, and thus $W^{R}(T^{-k}x)=W^{R}(T^{-k}y)$ for any $k\ge 0$.
This implies that for any $k\ge 0$, the sigma algebra $\cF_{-k,0}(W^{R})$ is coarser than $\mathcal{U}$, and thus
\[
\left\|\E_\mu\left(W_q^{R}\Big|\cF_{-k,0}(W^{R})\right)\right\|_{L^1(\mu)}\le \left\|\E_\mu\left(W_q^{R}\Big|\,\mathcal{U}\,\right)\right\|_{L^1(\mu)}.
\]
It remains to prove that
\begin{equation}\label{eq:HL3ii}
\left\|\E_\mu\left(W_q^{R}\Big|\,\mathcal{U}\,\right)\right\|_{L^1(\mu)}\le  C (R \gamma^{q-C_3\log(R)} + R^{-10})
\end{equation}
for some $C>0, C_3>0$ and $\gamma<1$. To proceed, denote the unstable manifolds included in $\mathcal{U}$ by $\eta_i$, $i\in I$, where the index set $I$ is uncountable.
When conditioning $\mu$ on $\eta_i$, we obtain a probability measure $\mu_i$, and thus a standard pair $\mathcal{G}_i=(\eta_i,\mu_i)$. For each $i\in I$ and any $k\ge 0$, the
push-forward $\mathcal{G}_i^k=T^k_*\mathcal{G}_i$ is a standard family. Then, the conditional expectation  $\E_\mu\left(W_q^{R}\Big|\,\mathcal{U}\,\right)$ is a random variable that takes a
constant value on each $\eta_i$, in particular the average of $W_0^R$ with respect to $\mathcal{G}_i^k$. To estimate these averages (and thus to get the bound on the $L^1$-norm as in \eqref{eq:HL3ii}),
we classify the unstable manifolds $\eta_i$ included in $\mathcal{U}$ according to their length. We refer to the following tail estimate on the length of unstable manifolds from \cite[Theorem 4.75]{ChM}:
there exists some $C>0$ such that, for any $\varepsilon>0$:
\begin{equation}\label{eq:UnstableTail}
\mu(x\in M \,|\,|\eta^u(x)|<\varepsilon) \le C\varepsilon,
\end{equation}
where $|\eta^u(x)|$ denotes the length of the unstable manifold of $\eta^u(x)$.
\begin{description}
\item \textit{Contribution of short unstable manifolds.} According to \eqref{eq:UnstableTail}, the $\mu$-measure of points with unstable manifolds shorter than $R^{-11}$ is less than $C\cdot R^{-11}$. On the other hand, by truncation, the function $W_0^R$ (and thus its conditional expectation) has absolute value uniformly bounded by $R$. Thus such short curves make a contribution to the $L^1$-norm of the conditional expectation bounded by $R\cdot C R^{-11}\ll R^{-10}$, giving the second term on the RHS of \eqref{eq:HL3ii}.
\item \textit{Contribution of long unstable manifolds.} Now consider the contribution of unstable manifolds of length $|\eta_i|\ge R^{-11}$. By \eqref{eq:growth},  there exists some $C_3>0$ such that for any such $\eta_i$ and any $k\ge C_3\log R$,  the $T^k$-push-forward $\mathcal{G}_i^k$ is a proper standard family. Hence its further iterates equidistribute at an exponential rate. Consequently, for $q\ge C_3\log R$, averages with respect to $\mathcal{G}_i^q$ are $\gamma^{q-C_3\log R}$-close to averages with respect to $\mu$. Now by (H3)(i), we have $\mathbb{E}_\mu (W_0^R)=0$, thus the average of $W_0^R$ with respect to $\mathcal{G}_i^q$ is uniformly bounded by $\ll R \cdot \gamma^{q-C_3\log R}$,  for long unstable manifolds $\eta_i$. This gives the first term on the RHS of \eqref{eq:HL3ii}, and thus completes the verification of (HL3).
\end{description}

\appendix

\section{The  sequence $c_n$ defined in~\eqref{eq:cnlg} satisfies (HL0)(iii)}
\label{sec:cn}

The argument in~\cite[p.~1622]{EinLi} shows that given $c_n$ as in~\eqref{eq:cnlg},
$\sum\limits_{k=1}^{\infty} (c_k)^{-2}<\infty$. We need to show that $\sum\limits_{n=1}^\infty \frac{2^n LL(n)}{(c_{2^n})^2}<\infty$.

Proceeding as in the argument in~\cite[p.~1622]{EinLi}, a change of variables shows that
\begin{align*}
 \sum_n \frac{2^n LL(n)}{(c_{2^n})^2}\ll \int_1^\infty \frac{y}{1+e^y \sin^2 y}\, dy.
\end{align*}

We just need to argue that the above integral is finite, which can be seen as follows.
Let $U_k=B_{\frac{1}{(k\pi)^3}}(k\pi)$, $k\ge 1$ and set
$V_k=[(k-\frac 12)\pi, (k+\frac 12)\pi]\setminus U_k$.
Note that
\begin{align*}
 \int_{\frac \pi 2}^\infty \frac{y}{1+e^y \sin^2 y}\, dy=\sum_k\left(\int_{U_k}+\int_{V_k}\right)\frac{y}{1+e^y \sin^2 y}\, dy.
\end{align*}

On $V_k$ we have $\sin^2 y\ge \left(\frac{1}{2k\pi}\right)^6$. Note that $|V_k|=\pi$.
Thus, $\int_{V_k}\frac{y}{1+e^y \sin^2 y}\, dy\le \pi \frac{(2k\pi)^7}{e^{k\pi}}$,
which is summable in $k$.

On $U_k$ we have $\sin^2 y\ge 0$. Note that $|U_k|=\frac{2}{(k\pi)^3}$. Thus,
$\int_{U_k}\frac{y}{1+e^y \sin^2 y}\, dy\le \frac{4k\pi}{(k\pi)^3}$, which is also summable in $k$.\\

{\bf Acknowledgements.} We are grateful to the referees for the careful reading of our manuscript. We are extremely thankful to a referee for pointing
out a problem in the previous proof of Lemma~\ref{lemma:max}
and for suggesting a fix along the lines in~\cite{merl}.

 \end{document}